\theoremstyle{plain}
\newtheorem{theorem}{Theorem}[section]
\newtheorem{proposition}[theorem]{Proposition}
\newtheorem{remark}[theorem]{Remark}
\newtheorem{lemma}[theorem]{Lemma}
\newtheorem{definition}[theorem]{Definition}
\newtheorem{corollary}[theorem]{Corollary}
\newtheorem{example}[theorem]{Example}
\DeclareMathOperator{\Ext}{Ext} \DeclareMathOperator{\End}{End}\DeclareMathOperator{\Hom}{Hom}
\def\C{\mathbb C}
\def\Q{\mathbb Q}
\newcommand\sA{{\mathcal A}}
\newcommand\sB{{\mathcal B}}
\newcommand\sC{{\mathcal C}}
\newcommand\sE{{\mathcal E}}
\newcommand\sF{{\mathcal F}}
\newcommand\sG{{\mathcal G}}
\newcommand\sK{{\mathcal K}}
\newcommand\sL{{\mathcal L}}
\newcommand\sO{{\mathcal O}}
\newcommand\sP{{\mathcal P}}
\newcommand\sQ{{\mathcal Q}}
\newcommand\sU{{\mathcal U}}
\newcommand\sV{{\mathcal V}}
\newcommand\sZ{{\mathcal Z}}
\def\pee#1{\hbox{$ {\mathbf P}^{#1}$}}
  \def\peen{\hbox{$ {\mathbf  P}^n$}}
  \def \tab#1{\kern #1 truein}
  \def\OP#1{\hbox{${\cal O}_{{\mathbf P}^{#1}}$}}
  \def\E{\hbox{${\cal E}$}}
  \def\F{\hbox{${\cal F}$}}
  \def\A{\hbox{${{\cal A}}$}}
  \def\C{\hbox{${\cal C}$}}
  \def\Q{\hbox{${\cal Q}$}}
  \def\O#1{\hbox{${\cal O}_{#1}$}}  %the structure sheaf of a scheme
\begin{document}
\title{Horrocks Correspondence on ACM Varieties}
\author{F. Malaspina\thanks{Supported by the framework of PRIN 2010/11 \lq Geometria delle varieta' algebriche\rq, cofinanced by MIUR and partially supported by GNSAGA of Indam (Italy)} \ and A.P. Rao
\vspace{6pt}\\
 {\small  Politecnico di Torino}\\
{\small\it  Corso Duca degli Abruzzi 24, 10129 Torino, Italy}\\
{\small\it e-mail: francesco.malaspina@polito.it}\\
\vspace{6pt}\\
{\small   University of Missouri-St. Louis}\\
 {\small\it  St Louis, MO 63121, United States}\\
{\small\it e-mail: raoa@umsl.edu}} \maketitle \def\thefootnote{} \footnote{\noindent Mathematics Subject Classification 2010:
14F05, 14J60.
\\  Keywords: vector bundles, cohomology modules, Horrocks correspondence, smooth ACM varieties}

\begin{abstract}We describe a vector bundle $\sE$  on a smooth $n$-dimensional ACM variety in terms of its
cohomological invariants $H^i_*(\sE)$, $1\leq i \leq n-1$, and certain graded modules of ''socle elements'' built from $\sE$.
In this way we give a generalization of the Horrocks correspondence. We prove existence theorems where we construct vector bundles
from these invariants and uniqueness theorems where we show that these data determine a bundle up to isomorphisms.  The cases of
the quadric hypersurface in $\mathbb P^{n+1}$ and the Veronese surface in $\mathbb P^5$ are considered in more detail.
\end{abstract}

\begin{section}*{Introduction}

In his fundamental paper \cite{Ho}, Horrocks described all vector bundles on projective space $\peen$ in terms of their
intermediate cohomology modules. He described these cohomology modules using what he called a $\frak Z$-complex and showed that
the category of vector bundles modulo stable equivalence was equivalent to the category of all $\frak Z$-complexes modulo exact
free complexes. In particular, this gives the well-known Horrocks criterion for a vector bundle to be a sum of line bundles in
terms of the vanishing of its intermediate cohomology. His results were reformulated by Walters (\cite{Wa}) into the language
of derived categories and extended to sheaves by Coanda (\cite{Co}). Beilinson  (\cite{Beil}) described the derived category of
sheaves on a projective space using complexes built from an ``exceptional sequence'' $\{ \OP n(1-n),\dots, \OP n(-1),\OP n \}$
of line bundles on $\peen$, and Kapranov (\cite{Kap}) gave a similar description for smooth quadric hypersurfaces by enlarging
the sequence to include the spinor bundles $\Sigma$ of the quadric. Ancona and Ottaviani (\cite{A-O}) used these methods to
extend the Horrocks splitting criterion to quadrics, with a theorem that a vector bundle $\sE$ on a quadric $\Q_n$ (of
dimension $n$) is a sum of line bundles if and only if $\sE$ has its intermediate cohomology modules $H^i_*(\sE)$ all zero for
$1\leq i \leq n-1$ and also $H^{n-1}_*(\sE\otimes \Sigma)=0$ for the spinor bundles $\Sigma$.

In this paper, we copy Horrocks' method on a smooth ACM subvariety $X$ of projective space. Given a vector bundle $\sE$ on $X$,
we construct a
$\frak Z$-complex of free $A$-modules (where $A$ is the coordinate ring of $X$). The zeroth syzygy of this complex, when
sheafified, gives a vector bundle $\sF$ on $X$ which we call an Horrocks data bundle for $\sE$, since it comes with a map
$\sF \to \sE$ which is an isomorphism on intermediate cohomology modules. When the map is injective, the quotient is
some ACM bundle on $X$.

These methods of Horrocks provide for ACM varieties rings a vector bundle version of results of Auslander and Bridger
(\cite{A-B}), who studied structure theorems for
modules of finite Gorenstein dimension over a commutative ring and showed that they are projectively equivalent to an
extension of a module of zero Gorenstein dimension by a module of finite projective dimension (see also \cite{M-P-S}, Ch3,
Proposition 8). In his unpublished 1986 preprint
\cite{Buch}, Buchweitz proves a similar result for strongly Gorenstein (non-commutative) rings, where to any
finitely generated module, he shows that it fits into a short exact sequence between two modules which he (and Auslander) calls
a maximal Cohen-Macaulay approximation to the module and  a hull of finite projective dimension. We will see that
the graded $A$-module $F$ of global sections of the Horrocks data bundle $\sF$ will have $F^\vee$ of finite projective dimension.

With this  natural extension of Horrocks' arguments to an ACM variety, we give a generalization of the Horrocks correspondence
in Section 1.
Our goal in looking at a Horrocks correspondence on $X$ is to look for cohomological invariants that determine $\sE$. We will
take the Horrocks data bundle as encoding all the intermediate cohomology for $\sE$ and view it as one of the invariants.
So we will study the bundles $\sE$ with a fixed (minimal)
Horrocks data bundle $\sF$. While for the map $\sF \to \sE$, the induced map of first cohomology modules $H^1_*(\sF) \to
H^1_*(\sE)$ is an isomorphism, for various irreducible ACM bundles $\sB$ on $X$, the map $H^1_*(\sF\otimes \sB^\vee) \to
H^1_*(\sE\otimes \sB^\vee)$ may have a kernel. These kernels will give more cohomological invariants and we
will call them modules of $\sB$-socle elements. In Theorems \ref{iso-thm} and \ref{iso-thm2}, we see how these invariants
determine
$\sE$ up to direct sums of ACM bundles. We also give a splitting criterion for the bundle $\sE$ to be a sum of
line bundles restricted from projective space. What is lacking in Section 1 is an  understanding of which modules of
socle elements are obtained from a vector bundle for a general ACM variety.

In Section 2 we describe  the case of quadrics, on which ACM bundles are well understood due to Kn\"{o}rrer (\cite{Kn}).
In particular, for the spinor bundles $\Sigma_i$ on a quadric $\sQ_n$, modules of $\Sigma_i$-socle elements of an
Horrocks data bundle $\sF$ are just graded vector spaces.  We  show that a vector bundle $\sE$ exists
for each choice of Horrocks data bundle $\sF$ and vector spaces $V_i$ of $\Sigma_i$-socle elements of $\sF$, and that two
vector bundles with the same data of $\sF, V_i$ (up to obvious isomorphisms) are isomorphic up to direct sums of ACM bundles.
In this way we generalize the results obtained in \cite{M-R} on $\sQ_2$.

In the last section we deal with the Veronese surface ${\sV}\subset\mathbb P^5$. The study of vector bundles on $\sV$ is
trivial by Horrocks if we view $\sV$ as $\mathbb P^2$. But as another illustration of
the methods, it is an interesting example of an arithmetically Cohen-Macaulay
embedding which is not arithmetically Gorenstein and for which the ACM bundles are easy to handle.

\end{section}

\section{Horrocks data bundles on ACM Varieties}
 Let $X$ be a smooth ACM variety of dimension $n$ in $\mathbb P^{n+r}$ over a field $k$.  For any sheaf $\sB$ on $X$, $H^i_*(\sB)$ will denote
 $\oplus_{l\in \mathbb Z}H^i(X, \sB(l))$.  The coordinate ring of $X$, $A=H^0_*(\O{X})$, is a noetherian Cohen-Macaulay graded
 $k$-algebra. $H^i_*(\sB)$ is a graded module
 over $A$. Let \textbf{M} be the
 category of graded, finitely generated $A$-modules and graded homomorphisms. Any finitely generated projective graded
 $A$ module  has the form $\oplus_i A(a_i)$ for some shifts $a_i\in \mathbb Z$ in grading, and will be called a free
 $A$-module.
 Let $\mathfrak P \subset
 \textbf{M}$ be the full subcategory of finitely generated free $A$-modules.
 $\mathcal C^-(\textbf M)$ (respectively $\mathcal C^-(\mathfrak P)$) will denote the category of all complexes, bounded above,
 of objects in $\textbf M$ (resp. $\mathfrak P$), where morphisms are  maps between two complexes.
 Since \textbf{M} has enough projectives, given a complex $C^\centerdot$ of objects in \textbf{M}, bounded above,
 one can find a free resolution:
\textit{ie.} a complex $P^\centerdot$ in $\mathcal C^-(\mathfrak P)$ with a quasi-isomorphism $P^\centerdot \to C^\centerdot$.

Let $\sE \in \textbf{VB}$ be an object in the category of vector bundles of finite rank on $X$. $H^i_*(\sE)$ is an $A$-module
of finite length for $1\leq i\leq n-1$. A vector
bundle will be called free if it has the form $\oplus_i \sO_X(a_i)$.  A vector bundle $\sE$ will be
called ACM (arithmetically Cohen-Macaulay) if $H^i_*(\sE)=0$ for all $1\leq i\leq n-1$. Since $X$ is ACM, every free bundle is
ACM. By Serre duality, the line bundle $\omega_X$ is an ACM line bundle.

Given $\sE$, let $E$ denote the graded $A$-module $H^0_*(\sE) $. Denoting duals by $^\vee$ in the categories \textbf{VB} and
\textbf{M}, we have $H^0_*(\sE^\vee) \cong (H^0_*(\sE))^\vee $.
Following Horrocks, we choose a resolution of $H^0_*(\sE^\vee)$ by finitely generated free modules:
\begin{equation}\label{Ho1} \dots \to C^{3\vee} \to C^{2\vee} \to C^{1\vee} \to C^{0\vee} \to H^0_*(\sE^\vee)
\to 0.\end{equation} In \cite{Ho}, this could be chosen as a finite resolution, but in our case, it may be infinite. However,
if $K = \ker(C^{n-2\vee} \to C^{n-3\vee})$, then $\sK$ is an ACM vector bundle on $X$ where $\sK=\tilde K$ is the sheaf obtained
from $K$.
Replacing the terms up to and including $C^{n-1\vee}$ by $K$ and dualizing, we get the complex,

\begin{equation}\label{Ho2} C^{\centerdot}_{\{0,n\}}:  0 \to C^0 \xrightarrow {\delta^1_{C^\centerdot}} C^1 \xrightarrow
{\delta^2_{C^\centerdot}} C^2 \xrightarrow {\delta^3_{C^\centerdot}} \dots\xrightarrow {\delta^{n-2}_{C^\centerdot}} C^{n-2}\to
K^\vee \to 0.\end{equation}

The exact sequence (\ref{Ho1}), when sheafified, gives an exact sequence of vector bundles, and its  dual gives the exact
sequence of vector bundles
\begin{equation}\label{Ho3} 0 \to \sE \to \tilde C^0 \xrightarrow {\delta^1_{C^\centerdot}} \tilde C^1 \xrightarrow
{\delta^2_{C^\centerdot}} \tilde C^2 \xrightarrow {\delta^3_{C^\centerdot}} \dots\xrightarrow {\delta^{n-2}_{C^\centerdot}}
\tilde C^{n-2}\to \sK^{\vee} \to 0.\end{equation}

From this it becomes evident that $E = H^0_*(\sE)$ is given as $H^0(C^\centerdot_{\{0,n\}})$, and $ H^i_*(\sE)=
H^i(C^\centerdot_{\{0,n\}})$ for $i=1,\dots n-1$ (where $C_{\{0,n\}}^{n-1}$ is understood to refer to $K^\vee$).

$E$ itself has a free resolution (again possible infinite). Splice $ C^\centerdot_{\{0,n\}}$ with a  free resolution
$L^\centerdot$ of $E$ and call the resulting complex $C^\centerdot$. The complex $C^\centerdot$ is bounded above and has the
property that $H^i(C^\centerdot)= H^i_*(\sE)$ for $i=1,\dots n-1$ and equals $0$ for other values of $i$.

Choose a free resolution $ P^\centerdot$ in $\mathcal C^-(\mathfrak P)$ of $C^\centerdot$.
$$\begin{array}{cccccccccccccccccc}

P^{\centerdot}: & \dots\to &P^{-2}&\to &P^{-1}& \to &P^0& \xrightarrow {\delta^1_{P^\centerdot}}& P^1 &\xrightarrow
{\delta^2_{P^\centerdot}}&
\dots\xrightarrow {\delta^{n-2}_{P^\centerdot}}&  P^{n-2}&\to &P^{n-1}& \to 0\\

 & &\downarrow& &\downarrow& &\downarrow& & \downarrow & &
&  \downarrow& &\downarrow& \\

C^{\centerdot}: & \dots\to &L^{-2}&\to & L^{-1}& \to &C^0& \xrightarrow {\delta^1_{C^\centerdot}}& C^1 &\xrightarrow
{\delta^2_{C^\centerdot}}&
\dots\xrightarrow {\delta^{n-2}_{C^\centerdot}}&  C^{n-2}&\to &K^\vee& \to 0\\

\end{array}$$
 Then $P^\centerdot$ is an element
in $\mathcal C^-(\mathfrak P)$ with the property that $H^i(P^\centerdot)$ is an $A$-module of at most finite length for $1\leq
i \leq n-1$, and is zero for other $i$. In Horrocks \cite{Ho}, the bounded version of such a free complex was called a $\frak
Z$-complex, while Walters (\cite{Wa}) calls the category of such complexes $\textbf{FinL}(\mathfrak P)$. In our setting, we
will call it an Horrocks data complex and use the notation of Walters (\cite{Wa}). We also define an ``Horrocks data bundle''
for each such Horrocks data complex:

\begin{definition}
$\textbf{FinL}^{-}(\mathfrak P)$ is the full subcategory of all complexes $P^\centerdot$ in $\mathcal C^-(\mathfrak P)$ with
the property that  $H^i(P^\centerdot)$ is an $A$-module of at most finite length for $1\leq i \leq n-1$, and is zero for other
$i$.  A complex $P^\centerdot$ in $\textbf{FinL}^{-}(\mathfrak P)$ will be called an Horrocks data complex. For such a complex,
let $F= \ker [\delta^1_{P^\centerdot}:P^0 \to P^1] $. Then the sheaf $\sF= \tilde F$  will be called an Horrocks data bundle on
$X$.
\end{definition}

It should be clear that the above $\sF$ is a vector bundle on $X$ with the property that $H^i_*(\sF) = H^i(P^\centerdot)$ for
$1\leq i \leq n-1$. Horrocks (\cite{Ho} Theorem 7.2) shows that $F^\vee$ has finite free resolution.

\begin{lemma}{\bf{(Horrocks)}} $F^\vee$ has a finite free resolution.
\end{lemma}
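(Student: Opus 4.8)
The plan is to exhibit the finite free resolution of $F^\vee$ concretely, as the $A$-dual of a truncation of $P^\centerdot$. Abbreviate $\delta^i:=\delta^i_{P^\centerdot}$ and let $B^\centerdot$ be the brutal truncation of $P^\centerdot$ in non-negative degrees, i.e.\ the bounded complex of finitely generated free modules
$$B^\centerdot:\qquad 0\to P^0\xrightarrow{\delta^1}P^1\xrightarrow{\delta^2}\cdots\xrightarrow{\delta^{n-1}}P^{n-1}\to 0$$
placed in degrees $0,\dots,n-1$. Then $H^0(B^\centerdot)=\ker\delta^1=F$, $H^i(B^\centerdot)=H^i(P^\centerdot)$ is of finite length for $1\le i\le n-1$, and $H^i(B^\centerdot)=0$ for all other $i$. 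Dualizing termwise produces the bounded complex of free modules
$$B^{\centerdot\vee}:\qquad 0\to P^{n-1\vee}\xrightarrow{\delta^{n-1\vee}}P^{n-2\vee}\to\cdots\to P^{1\vee}\xrightarrow{\delta^{1\vee}}P^{0\vee}\to 0,$$
with $P^{i\vee}$ in cohomological degree $-i$. I claim this complex is exact except at $P^{0\vee}$, where its cohomology is exactly $F^\vee$; granting this, $0\to P^{n-1\vee}\to\cdots\to P^{0\vee}\to F^\vee\to 0$ is a finite free resolution of $F^\vee$ of length $\le n-1$, and we are done. (If $n=1$ there is nothing to prove: then $P^\centerdot$ is supported in degrees $\le 0$, so $\delta^1=0$ and $F=P^0$ is already free; so assume $n\ge 2$.)

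To prove the claim I would pass to the derived category. Since $B^\centerdot$ is a bounded complex of projectives, $B^{\centerdot\vee}$ computes $R\Hom_A(B^\centerdot,A)$. Use the canonical truncation triangle $F\to B^\centerdot\to N^\centerdot\to F[1]$, where $\tau_{\le 0}B^\centerdot$ is identified with $F$ in degree $0$ (as $B^\centerdot$ has no cohomology in negative degrees) and $N^\centerdot:=\tau_{\ge 1}B^\centerdot$ has cohomology the finite length modules $H^i(P^\centerdot)$ in degrees $1\le i\le n-1$ and nothing else. The decisive point is that $A$ is Cohen-Macaulay of dimension $n+1$: a finite length graded $A$-module $T$ is $\mathfrak m$-primary, hence has grade $n+1$, so $\Ext^p_A(T,A)=0$ for every $p\le n$. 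Filtering $N^\centerdot$ by its (finite) canonical filtration, whose graded pieces are the shifts $H^i(P^\centerdot)[-i]$ with $1\le i\le n-1$, and using that the $j$-th cohomology of $R\Hom_A\big(H^i(P^\centerdot)[-i],A\big)$ is $\Ext^{j+i}_A\big(H^i(P^\centerdot),A\big)$, a d\'evissage shows that $R\Hom_A(N^\centerdot,A)$ has vanishing cohomology in all degrees $\le 1$ — indeed $j\le 1$ together with $1\le i\le n-1$ forces $j+i\le n$. Applying $R\Hom_A(-,A)$ to the triangle and reading off the long exact sequence then gives $H^j(B^{\centerdot\vee})\cong\Ext^j_A(F,A)$ for all $j\le 0$, hence $H^j(B^{\centerdot\vee})=0$ for $j<0$ and $H^0(B^{\centerdot\vee})=\Hom_A(F,A)=F^\vee$, which is the claim.

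The only step that is not purely formal is this low-degree vanishing of $R\Hom_A(N^\centerdot,A)$, and there the estimate is sharp: it succeeds precisely because the finite length cohomology of $P^\centerdot$ is confined to the window $1\le i\le n-1$, so that even the outermost graded piece $H^{n-1}(P^\centerdot)[-(n-1)]$ only feeds $\Ext^{\ge n+1}_A$, i.e.\ contributes to $R\Hom_A(N^\centerdot,A)$ only in degrees $\ge 2$; were cohomology permitted in degree $n$, the argument would break. This is exactly the point at which Cohen-Macaulayness of $A$ is used. (In Horrocks' original situation $A$ is a polynomial ring, so a finite length module automatically has projective dimension $n+1$ and the needed vanishing is immediate; the present argument shows that Cohen-Macaulayness of $A$ is all that is really required.)
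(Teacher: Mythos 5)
Your proof is correct, and it is essentially the argument the paper delegates to its citation of Horrocks (Theorem 7.2 of \cite{Ho}): dualize the brutal truncation $P^{\ge 0}$ and use that the finite-length cohomology in degrees $1\le i\le n-1$ contributes nothing to $\Ext^{\le n}_A(-,A)$, so the dual complex resolves $F^\vee$. In particular you have made explicit the one point the paper's one-line proof gestures at — that the grade of the irrelevant maximal ideal of the $(n+1)$-dimensional Cohen--Macaulay ring $A$ is $n+1$, so $\Ext^p_A(T,A)=0$ for $p\le n$ when $T$ has finite length — which is exactly why Horrocks' regular-case argument ``remains valid when $A$ is Cohen--Macaulay.''
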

\begin{proof} Horrocks' proof cited above is when $A$ is a regular ring, but remains valid when $A$ is Cohen-Macaulay.
\end{proof}

Since the module of global sections of a non-free ACM bundle and of its dual bundle on $X$ have infinite projective dimension
over $A$, it follows that an Horrocks data bundle $\sF$ can have no non-free ACM bundle or its dual as a summand.

Since any $P^\centerdot$ in $\mathcal C^-(\mathfrak P)$ decomposes as $M^\centerdot \oplus L^\centerdot$, where $M^\centerdot$
is a minimal free complex and  $L^\centerdot$ is an acyclic free complex, we get $\sF = \sF_{min}\oplus \sL$ where $\sF,
\sF_{min}$, and $\sL$ are the Horrocks data bundles corresponding to $P^\centerdot,M^\centerdot$, and $L^\centerdot$
respectively. $\sL$ is a free bundle and $\sF_{min}$ will be called a ``minimal'' Horrocks data bundle. The following
isomorphism theorem on projective space can be found in \cite{Ho} Theorem 7.5, Proposition 9.5 or \cite{Wa} Lemma 2.11.

\begin{proposition}\label{min-F} Let $\sigma: \sF \to \sF'$ be a homomorphism between two minimal Horrocks data bundles on $X$
such that $\sigma$ induces isomorphism $H^i_*(\sF) \to H^i_*(\sF')$ for $1\leq i \leq n-1$. Then $\sigma$ is an isomorphism.
\end{proposition}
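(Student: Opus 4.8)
\medskip\noindent\emph{Proof proposal.} The plan is to reduce the statement to the corresponding fact about minimal free complexes. I would fix minimal free complexes $M^\centerdot, M'^\centerdot$ in $\textbf{FinL}^{-}(\mathfrak P)$ having $\sF,\sF'$ as their Horrocks data bundles, so that $F=\ker[\delta^1_{M^\centerdot}:M^0\to M^1]$ with $\sF=\widetilde F$, and similarly $F'$ with $\sF'=\widetilde{F'}$; recall $H^i_*(\sF)=H^i(M^\centerdot)$ for $1\leq i\leq n-1$ and $H^i(M^\centerdot)=0$ otherwise, and likewise for $M'^\centerdot$. Since $X$ is ACM, $H^0_*$ carries a free bundle $\widetilde{M^i}$ back to the free module $M^i$, so applying $H^0_*$ to $0\to\sF\to\widetilde{M^0}\to\widetilde{M^1}$ gives $F=H^0_*(\sF)$, and likewise $F'=H^0_*(\sF')$; moreover $F,F'$ are reflexive, being modules of sections of bundles on $X$. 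Thus $\sigma$ induces an $A$-module map $u:=H^0_*(\sigma):F\to F'$, and the problem reduces to showing $u$ is an isomorphism, for then $\sigma=\widetilde u$.

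The first and main step is to lift $\sigma$ to a morphism of complexes $\tilde u^\centerdot:M^\centerdot\to M'^\centerdot$, which I would build in two halves. On the brutal truncation $M^{<0}=(\cdots\to M^{-1})$: since $H^i(M^\centerdot)=0$ for $i\leq 0$, the complex $\cdots\to M^{-1}\to F\to 0$ is a free (hence projective) resolution of $F$, so $u$ lifts to a chain map $M^{<0}\to M'^{<0}$ agreeing with $u$ on $F$. On $M^{\geq 0}=(M^0\to\cdots\to M^{n-1})$: I would dualize termwise and argue that $(M^{\geq 0})^\vee:0\to(M^{n-1})^\vee\to\cdots\to(M^0)^\vee\to 0$ is a \emph{finite} free resolution of $F^\vee$ (this reproves the Horrocks Lemma in our setting). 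Writing $B^k=\im\delta^k_{M^\centerdot}$, each $M^k/B^k$ is an iterated extension of the finite-length modules $H^i_*(\sF)$ by the submodules $B^{k+1}$ of free modules; since $A$ is a Cohen--Macaulay domain of dimension $n+1$, we have $\Hom_A(H^i_*(\sF),A)=0$ and $\Ext^j_A(H^i_*(\sF),A)=0$ for $j\leq n$, and a dimension shift through the complex then yields $\Ext^1_A(M^k/B^k,A)=0$ and the claimed exactness. The same holds for $F'^\vee$, so $u^\vee:F'^\vee\to F^\vee$ lifts to a chain map $(M'^{\geq 0})^\vee\to(M^{\geq 0})^\vee$, and dualizing back (using $(M^i)^{\vee\vee}=M^i$ and $F^{\vee\vee}=F$) produces a chain map $M^{\geq 0}\to M'^{\geq 0}$ inducing $u$ on $H^0=F$. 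The two halves agree on $F$, hence splice to the desired $\tilde u^\centerdot$.

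Next I would check that $\tilde u^\centerdot$ is a quasi-isomorphism. For $i\leq 0$ and $i\geq n$ both complexes have zero cohomology there. For $1\leq i\leq n-1$, under the dimension-shift identifications $H^i(M^\centerdot)\cong H^i_*(\sF)$ and $H^i(M'^\centerdot)\cong H^i_*(\sF')$ coming from the exact bundle sequences $0\to\sF\to\widetilde{M^0}\to\cdots\to\widetilde{M^{n-1}}\to 0$, the map $H^i(\tilde u)$ corresponds to $H^i_*(\sigma)$, because the sheafification of $\tilde u^\centerdot$ is a chain map of bundle complexes restricting to $\sigma$ on $\sF$; and $H^i_*(\sigma)$ is an isomorphism by hypothesis.

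Finally, a quasi-isomorphism between bounded-above complexes of finitely generated free $A$-modules has split exact mapping cone, so it is a homotopy equivalence; pick a homotopy inverse $v^\centerdot$. Then $v\tilde u-\mathrm{id}$ and $\tilde u v-\mathrm{id}$ are null-homotopic, and since the differentials of the \emph{minimal} complexes $M^\centerdot,M'^\centerdot$ have entries in the graded maximal ideal $\mathfrak m$ of $A$, these differences have matrix entries in $\mathfrak m$; so in each degree $v\tilde u$ and $\tilde u v$ are square matrices congruent to the identity modulo $\mathfrak m$, hence invertible by graded Nakayama. Therefore $\tilde u^\centerdot$ is an isomorphism of complexes; in particular $\tilde u^0:M^0\to M'^0$ is an isomorphism commuting with $\delta^1$, so it restricts to an isomorphism $F=\ker\delta^1_{M^\centerdot}\xrightarrow{\ \sim\ }\ker\delta^1_{M'^\centerdot}=F'$, which is exactly $u$. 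Hence $u$, and so $\sigma=\widetilde u$, is an isomorphism. I expect the real obstacle to be the lifting in the second paragraph --- in particular verifying that $(M^{\geq 0})^\vee$ is a genuine finite free resolution of $F^\vee$ --- which is precisely where the Cohen--Macaulay hypothesis on $A$ and the finite length of the intermediate cohomology modules are used.
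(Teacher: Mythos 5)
Your proof is correct and follows essentially the route the paper itself relies on, namely the argument of Horrocks (Theorem 7.5 / Proposition 9.5) and Walters (Lemma 2.11) that the paper simply cites: lift $\sigma$ to a chain map of the minimal complexes, check it is a quasi-isomorphism and hence a homotopy equivalence, and conclude by minimality and graded Nakayama. The one point needing care in the ACM setting --- that the termwise dual of the truncation $M^{\ge 0}$ is a genuine finite free resolution of $F^\vee$, via $\Ext^j_A(H^i_*(\sF),A)=0$ for $j\le n$ over the Cohen--Macaulay ring $A$ --- is exactly the content of the paper's Lemma 1.2, and you have supplied it correctly.
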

\begin{proof} The proofs cited above work in our ACM setting as well.
\end{proof}

Returning to the vector bundle $\sE$, let $ P^\centerdot$ be a free resolution of $C^\centerdot$ as described above. Let
$P^\centerdot_{\geq 0}$ denote the na\"{i}ve truncation of $P^\centerdot$ at the zeroth term. We get the induced homomorphism
of complexes
$$  P^\centerdot_{\geq 0}  \rightarrow  C^{\centerdot}_{\{0,n\}}. $$
For $F$  defined as $\ker \delta^1_{P^\centerdot}$, there is an induced homomorphism $F \to E$. For the Horrocks data
bundle $\sF= \tilde F$, we get a homomorphism $\beta: \sF \to \sE$  which induces  isomorphisms $H^i_*(\sF) \to H^i_*(\sE)$ for
$1\leq i \leq n-1$. Hence any vector bundle $\sE$ has an ``Horrocks datum'' as defined below:

\begin{definition} Let $\sE$ be a vector bundle on $X$. A pair $(\sF, \beta)$ will be called an Horrocks datum for $\sE$
if $\sF$ is an Horrocks data bundle and $\beta$ is a homomorphism $\beta:\sF \to \sE$ which induces isomorphisms $H^i_*(\sF)
\to H^i_*(\sE)$ for $1\leq i \leq n-1$.
\end{definition}

In dual form, the $A$-module $F^\vee$ in the definition above has been called by Auslander as a ``hull of finite projective
dimension'' for $E^\vee$, in his definition of maximal Cohen-Macaulay approximations for a module (\cite{Buch}). We use the
notation: Horrocks data bundle for $\sE$, since $\sF$ encodes all the intermediate cohomology data of $\sE$.

\begin{theorem}\label{Horrocks data}
\begin{enumerate}
\item Let $\sE_1, \sE_2$ be vector bundles on $X$ with Horrocks data $(\sF_1,\beta_1), (\sF_2, \beta_2)$ respectively.
 Let  $\sigma: \sE_1 \to \sE_2$ be a homomorphism. Then there is a free bundle $\sZ$ and a commuting square
 \[\begin{CD}
&\sF_1& & \rightarrow &\sF_2 &\oplus \sZ& \\
      &\downarrow &{\beta_1} &        &&\downarrow&{(\beta_2,*)} \\
&\sE_1& & \xrightarrow{\sigma}  &&\sE_2 &
\end{CD}
\]
\item If $H^0_*(\beta_2): H^0_*(\sF_2) \to H^0_*(\sE_2)$ is surjective, the free bundle $\sZ$ can be chosen to be zero.
\end{enumerate}
\end{theorem}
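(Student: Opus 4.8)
The plan is to lift the morphism $\sigma$ along the free resolutions used to build the Horrocks data, exploiting that in the construction of an Horrocks datum the complex $P^\centerdot_{\geq 0}$ is a complex of \emph{projective} (free) modules mapping quasi-isomorphically to $C^\centerdot_{\{0,n\}}$, which computes the intermediate cohomology of $\sE$. First I would replace the geometry by algebra: the map $\sigma:\sE_1\to\sE_2$ induces a map on cohomology complexes $C^\centerdot_{1,\{0,n\}}\to C^\centerdot_{2,\{0,n\}}$ after splicing in free resolutions of $E_1,E_2$ (i.e. the complexes called $C^\centerdot$ in the construction preceding the Definition of Horrocks data complex). Since $P_1^\centerdot$ is a bounded-above complex of projectives resolving $C_1^\centerdot$, and $C_2^\centerdot$ is quasi-isomorphic (via $P_2^\centerdot$) to its own resolution, the standard comparison theorem for projective resolutions produces a chain map $P_1^\centerdot\to P_2^\centerdot$ lifting $\sigma$ on cohomology, unique up to homotopy. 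Taking zeroth syzygies $F_i=\ker\delta^1_{P_i^\centerdot}$ gives $F_1\to F_2$ and hence $\sF_1\to\sF_2$; composing with $\beta_2$ and comparing with $\beta_1\circ(\text{lift})$ on intermediate cohomology shows both compositions $\sF_1\to\sE_2$ induce the same map $H^i_*$ for $1\le i\le n-1$, namely $H^i_*(\sigma)\circ H^i_*(\beta_1)$.

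The subtlety is that $\sF_1\to\sF_2$ followed by $\beta_2$ need not literally equal $\sigma\circ\beta_1$ as maps of sheaves — only on the finite-length cohomology in degrees $1,\dots,n-1$. To repair this I would use that $\beta_1$ induces an isomorphism $H^i_*(\sF_1)\xrightarrow{\sim}H^i_*(\sE_1)$, so both maps $\sF_1\to\sE_2$ agree after composing with this iso on cohomology; the difference $\tau:=\beta_2\circ(\sF_1\to\sF_2)-\sigma\circ\beta_1:\sF_1\to\sE_2$ then induces the zero map on $H^i_*$ for all $1\le i\le n-1$. By the usual argument (apply $\Hom(\sF_1,-)$ to a presentation of $\sE_2$, or work with the module map $F_1\to E_2$ and its behavior on a free resolution), such a $\tau$ factors through a free bundle: concretely, $\tau$ kills the intermediate cohomology, so the induced $F_1\to E_2$ lifts to $F_1\to (\text{free cover of }E_2)$, and hence $\tau$ factors as $\sF_1\to\sZ\to\sE_2$ with $\sZ$ free. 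Packaging $\sF_1\to\sF_2$ and $\sF_1\to\sZ$ together gives the map $\sF_1\to\sF_2\oplus\sZ$, and then the map $(\beta_2,*)$ to $\sE_2$ is $(\beta_2, \text{the }\sZ\to\sE_2)$; by construction $(\beta_2,*)\circ(\sF_1\to\sF_2\oplus\sZ)=\beta_2\circ(\sF_1\to\sF_2)-\tau=\sigma\circ\beta_1$, which is the commuting square.

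I expect the main obstacle to be the rigorous construction of the factorization of $\tau$ through a free bundle — i.e. showing that a sheaf map $\sF_1\to\sE_2$ that is zero on all intermediate cohomology must factor through a free sheaf. The cleanest route is module-theoretic: pass to $F_1\to E_2$, note that on the level of the resolutions $P_1^\centerdot$ and (a resolution of) $E_2$ the induced chain map is null-homotopic in degrees $1$ through $n-1$, build the homotopy, and read off that $F_1\to E_2$ lifts through the degree-$0$ term of a free resolution of $E_2$, which sheafifies to a free bundle. For part (2), if $H^0_*(\beta_2)$ is surjective then $E_2$ is a quotient of $F_2$ by an ACM (indeed the kernel is ACM by the exact sequence (\ref{Ho3}) argument), so when we lift $F_1\to E_2$ we can lift it all the way to $F_1\to F_2$ directly — there is no need for the extra free summand to absorb the discrepancy, because the discrepancy $\tau$ can be corrected inside $\sF_2$ itself by modifying the lift. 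Thus $\sZ$ may be taken to be $0$.
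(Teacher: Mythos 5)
Your proposal is correct and takes essentially the same route as the paper: the paper lifts the induced morphism $C_1^\centerdot \to C_2^\centerdot$ to the projective resolutions $P_i^\centerdot$ ``after adding a free acyclic complex to $P_2^\centerdot$,'' and that added acyclic complex is exactly the device that absorbs the homotopy discrepancy you call $\tau$, producing the summand $\sZ$. Your treatment of part 2 (lifting the free correction term through the surjection $H^0_*(\beta_2)$ so that $\sZ$ can be taken to be zero) is likewise the intended ``elementary'' argument.
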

\begin{proof}  It is straightforward to see that the construction of the complex $C^\centerdot$ out of the vector bundle $\sE$
is functorial in the sense that given $\sigma: \sE_1 \to \sE_2$, there is induced a morphism from $C^\centerdot_1 \to
C^\centerdot_2$ with the property that the homomorphisms $H^i(C^\centerdot_1) \to H^i(C^\centerdot_2)$ for $1\leq i \leq n-1$
coincide with $H^i(\sigma):H^i_*(\sE_1) \to H^i_*(\sE_2)$. In the special case of $\beta_k: \sF_k \to \sE_k$, an Horrocks
datum, we get a quasi-isomorphism $P^\centerdot_k \to \C^\centerdot_k$, where $P^\centerdot_k$ is the Horrocks data complex
associated to $\sF_k$, so that $P^\centerdot_k \to \C^\centerdot_k$ is a free resolution of $\C^\centerdot_k$. Now given a
morphism of complexes $C^\centerdot_1 \to C^\centerdot_2$, we can lift the morphism to their free resolutions, after adding a
free acyclic complex to $P^\centerdot_2$. This gives the commuting square of part 1.  The proof of part 2 is elementary.
\end{proof}

The following theorems \ref{gamma} and \ref{eta} are to be found in more general form in \cite{Buch} as the ``Syzygy Theorem
for Gorenstein Rings''. The diagram on Theorem \ref{diag} below is Buchweitz's octahedron (\textit{loc cit}, 5.3.1).

\begin{theorem}{\bf{($\gamma$ sequence for $\sE$)}}\label{gamma} Let $\sE$ be a vector bundle on $X$, $(\sF,\beta)$ an
Horrocks datum for $\sE$. From the Horrocks data complex $P^\centerdot$ for $\sF$, consider the exact sequence $\Psi:0 \to \sF
\to \sP^0 \to \sG \to 0$, where $\sP^0 = \tilde P^0$ and $\sG = \tilde G$ with $G = \ker \delta^2_{P^\centerdot}$. We define
$\gamma$ as the push-out of $\Psi$ by $\beta$,
$$\begin{CD}
\Psi:     &0 \to &\sF             &\rightarrow  &\sP^0   &\rightarrow &\sG & \to 0 \\
          &      &\downarrow\beta &             & \downarrow &            & \|     &       \\
\gamma: &0 \to &\sE               &\xrightarrow{f}  &\sA   &\xrightarrow{g}  &\sG &\to 0
\end{CD}.$$
The following hold:
\begin{enumerate}

\item Given two bundles $\sE_1,\sE_2$, a morphism $\sigma: \sE_1 \to \sE_2$, and Horrocks data $(\sF_1, \beta_1), (\sF_2,
\beta_2)$ for each bundle, we obtain a commuting box of short exact sequences (using obvious notation)
$$ \begin{CD}
&\Psi_1& & \rightarrow &\Psi_2 &\oplus \lambda & \\
      &\downarrow &{\beta_1} &        &&\downarrow&{(\beta_2,*)} \\
&\gamma_1& & \xrightarrow{\sigma}  &&\gamma_2 &
\end{CD}
$$
where $\lambda$ is a short exact sequence $0 \to \sZ \to \sZ \to 0 \to 0$ of free bundles. If $H^0_*(\beta_2)$ is surjective
onto  $H^0_*(\sE_2)$, $\lambda$ may be taken to be zero. \item $H^{n-1}_*(\sG)=0,$ and $\sA$ is an ACM bundle on $X$. \item Up
to a short exact sequence $0 \to 0 \to \sZ \to \sZ \to 0$ of free bundles, the sequence $\gamma$ depends only on $\sE$ and not
on the choice of Horrocks datum.
\end{enumerate}
\end{theorem}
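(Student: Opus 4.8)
The plan is to prove the three parts of Theorem \ref{gamma} in order, using that $\gamma$ is defined as a push-out of $\Psi$ along $\beta$, together with the functoriality already established in Theorem \ref{Horrocks data}. The underlying principle throughout is that a push-out of a short exact sequence inherits exactness and functorial behavior, and that the cokernel of $\beta$ (which is an ACM bundle, since $\beta$ is an isomorphism on intermediate cohomology — this is essentially the content of the earlier construction) is what controls the properties of $\sA$.

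For part 1, I would argue as follows. By Theorem \ref{Horrocks data}, the morphism $\sigma: \sE_1 \to \sE_2$ lifts to a commuting square $\sF_1 \to \sF_2 \oplus \sZ$ over $\beta_1$ and $(\beta_2, *)$, with $\sZ$ a free bundle (and $\sZ = 0$ when $H^0_*(\beta_2)$ is surjective). Next, this lifted map $\sF_1 \to \sF_2 \oplus \sZ$ extends to a map of the sequences $\Psi_1 \to \Psi_2 \oplus \lambda$, where $\lambda: 0 \to \sZ \to \sZ \to 0 \to 0$: indeed the map $\sF_1 \to \sF_2 \oplus \sZ$ comes from a map of the associated Horrocks data complexes $P_1^\centerdot \to P_2^\centerdot \oplus (\text{acyclic complex with } \sZ)$, and restricting to the relevant terms $P^0$ and $G = \ker \delta^2$ gives the extension. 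Finally, pushing out the whole commuting box of $\Psi$-sequences along $\beta_1$ and $(\beta_2,*)$ — and using that push-out is functorial — yields the commuting box of $\gamma$-sequences. The statement about $H^0_*(\beta_2)$ being surjective transfers directly from Theorem \ref{Horrocks data}(2).

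For part 2, the key is the snake lemma / push-out comparison: from the push-out diagram of $\Psi$ by $\beta$, the cokernel of $\beta: \sF \to \sE$ equals the cokernel of $\sP^0 \to \sA$, so $\sA$ fits in $0 \to \coker(\beta) \to \sA \to \tilde{?}$ — more precisely, the snake lemma applied to the push-out square gives $\coker(\sF \to \sP^0) \cong \coker(\sE \to \sA)$ compatibly, and also shows $\ker(\sF \to \sP^0) \cong \ker(\sE \to \sA)$ (both zero here). Since $\beta$ induces isomorphisms on $H^i_*$ for $1 \le i \le n-1$, the long exact cohomology sequence of $0 \to \sF \to \sE \to \coker \beta \to 0$ forces $H^i_*(\coker\beta) = 0$ for $1 \le i \le n-2$ and $H^{n-1}_*(\coker\beta) \hookrightarrow H^{n-1}_*(\sF) $... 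I will need to be a little careful here, but the point is that $\coker\beta$ is ACM (this is part of why $\sF \to \sE$ was constructed). Then from $0 \to \sE \to \sA \to \sG \to 0$ and the fact that both $\sE$'s intermediate cohomology and $\sG$'s are controlled: $\sG$ has a free resolution structure coming from $P^\centerdot$, so $H^i_*(\sG) \cong H^{i+1}(P^\centerdot) = H^{i+1}_*(\sF)$ for the relevant range, hence $H^{n-1}_*(\sG) \cong H^n_*(P^\centerdot) = 0$. Chasing the long exact sequence of $\gamma$ together with the vanishing on $\sF$ (equivalently on $\sE$ via $\beta$) and on $\sG$ then yields $H^i_*(\sA) = 0$ for $1 \le i \le n-1$, i.e. $\sA$ is ACM.

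For part 3, suppose $(\sF', \beta')$ is another Horrocks datum for $\sE$, giving $\gamma'$. Apply part 1 with $\sigma = \mathrm{id}_\sE$ in both directions: this produces maps $\gamma \to \gamma' \oplus \lambda$ and $\gamma' \to \gamma \oplus \lambda'$ over the identity on $\sE$. Passing to minimal Horrocks data bundles via the decomposition $\sF = \sF_{min} \oplus \sL$, and using Proposition \ref{min-F} to conclude that the induced map on minimal parts is an isomorphism (both $\sF_{min}$ and $\sF'_{min}$ have the same intermediate cohomology, namely that of $\sE$), I would deduce that $\gamma$ and $\gamma'$ agree up to adding split exact sequences $0 \to 0 \to \sZ \to \sZ \to 0$ of free bundles. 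The main obstacle I anticipate is part 1: making rigorous the lift from ``map of Horrocks data bundles'' (Theorem \ref{Horrocks data}) to ``map of the $\Psi$-sequences'' and then checking that the push-out construction genuinely produces a commuting box — this requires tracking the functoriality of the free resolutions $P^\centerdot$ at the level of the truncated pieces $P^0$ and $\ker\delta^2_{P^\centerdot}$, and verifying the acyclic complex one must add is precisely of the form $\lambda$. The cohomological computations in part 2 are routine once the identification $H^i_*(\sG) \cong H^{i+1}_*(\sF)$ (in the appropriate range) is in hand, which itself follows from sheafifying the Horrocks data complex.
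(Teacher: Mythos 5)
Your proposal is correct and follows essentially the same route as the paper: part 1 by lifting $\sigma$ through Theorem \ref{Horrocks data} to a map of $\Psi$-sequences (at the level of the Horrocks data complexes) and then pushing out, part 2 via $H^{n-1}_*(\sG)=H^n(P^\centerdot)=0$ and the identifications $H^i_*(\sG)\cong H^{i+1}_*(\sF)\cong H^{i+1}_*(\sE)$ fed into the long exact sequence of $\gamma$, and part 3 by applying part 1 to the identity morphism and invoking Proposition \ref{min-F}. The only caveat is that your aside about $\coker\beta$ being ACM is both unnecessary and not justified under the hypotheses here ($\beta$ need not be injective, nor $H^0_*(\beta)$ surjective, in this theorem); the load-bearing argument is the one you give through $\sG$, where the chase killing $H^1_*(\sA)$ also uses the surjection $H^0_*(\sG)\twoheadrightarrow H^1_*(\sF)\cong H^1_*(\sE)$ coming from $H^1_*(\sP^0)=0$, which the paper states explicitly and you leave implicit in ``the appropriate range.''
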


\begin{proof}
\begin{enumerate}
\item $\sigma$ lifts to a map $\sF_1 \to \sF_2 \oplus \sZ$ to give a commuting square, by the Theorem \ref{Horrocks data}.
$\sF_2 \oplus \sZ$ is an Horrocks data bundle for the Horrocks data complex where $P^0$ is replaced by $P^0 \oplus \sZ$ but
with the same bundle $\sG_2$. It is easy to see that the map $\sF_1 \to \sF_2 \oplus \sZ$ extends to a map of sequences $\Psi_1
\rightarrow \Psi_2 \oplus \lambda$. The push-outs of $\Psi_2$ and $\Psi_2 \oplus \lambda$ give the same sequence $\gamma_2$.
Lastly, since we have a commuting square from the first line of the proof, the pushouts of $\Psi_1$ and $\Psi_2 \oplus \lambda$
give a commuting box of exact sequences.
 \item By construction, $H^{n-1}_*(\sG) = H^n(P^\centerdot) =0$. Since we have
isomorphisms $H^i_*(\sG)\cong H^{i+1}_*(\sF) \cong H^{i+1}_*(\sE)$ for $1\leq i \leq n-2$ and $H^0_*(\sG)\twoheadrightarrow
H^{1}_*(\sF) \cong H^{1}_*(\sE)$, we conclude that $\sA$ is ACM. \item The last item follows from the first part when we apply
the previous theorem to the identity morphism from $\sE$ to $\sE$. Indeed, the theorem, together with Theorem \ref{min-F},
shows that any two Horrocks data bundles for $\sE$ are stably equivalent.
\end{enumerate}
\end{proof}

\begin{theorem}{\bf{($\eta$ sequence for $\sE$)}}\label{eta} Let $(\sF, \beta)$ be a  Horrocks datum for the bundle $\sE$
such that $H^0_*(\beta)$ is surjective. We define the $\eta$ sequence for $\sE$ to be (where $\sK$ is the kernel bundle)
$$ 0 \to \sK \to \sF \xrightarrow{\beta} \sE \to 0.$$
The following hold:
\begin{enumerate}
\item $\sK$ is an ACM bundle. \item $\eta$ is determined by $\sE$ up to a short exact sequence $0 \to \sZ \to \sZ \to 0\to 0$
of free bundles. \item Given a morphism $\sigma: \sE_1 \to \sE_2$, there is an induced morphism of short exact sequences
$\eta_1 \to \eta_2$.
\end{enumerate}
\end{theorem}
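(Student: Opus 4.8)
I would prove the three assertions in the order (1), (3), (2): claim (3) is essentially a restatement of Theorem \ref{Horrocks data}, and claim (2) reduces to (3) applied to the identity map of $\sE$. For (1), I would first check that $\beta$ is surjective as a morphism of sheaves, so that $\eta$ really is a short exact sequence and $\sK=\ker\beta$ is locally free. Pick $l\gg 0$ with $\sE(l)$ globally generated; since $H^0_*(\beta)$ is surjective, $H^0(\sF(l))\to H^0(\sE(l))$ is onto, so the composite $H^0(\sF(l))\otimes_k\sO_X\to H^0(\sE(l))\otimes_k\sO_X\xrightarrow{\mathrm{ev}}\sE(l)$ is surjective; by naturality of evaluation this composite equals $H^0(\sF(l))\otimes_k\sO_X\xrightarrow{\mathrm{ev}}\sF(l)\xrightarrow{\beta}\sE(l)$, whence $\beta(l)$, and therefore $\beta$, is surjective. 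Then I would run the long exact sequence of the cohomology modules $H^i_*(-)$ attached to $\eta$. For $2\le i\le n-1$, injectivity of $H^i_*(\sF)\to H^i_*(\sE)$ forces $H^i_*(\sK)\to H^i_*(\sF)$ to vanish, so $H^i_*(\sK)$ equals the image of the connecting map $H^{i-1}_*(\sE)\to H^i_*(\sK)$; that connecting map is zero because $H^{i-1}_*(\sF)\to H^{i-1}_*(\sE)$ is onto (here $1\le i-1\le n-2$), so $H^i_*(\sK)=0$. For $i=1$ the same argument shows $H^1_*(\sK)$ is the image of $H^0_*(\sE)\to H^1_*(\sK)$, which vanishes precisely because $H^0_*(\beta)$ is surjective. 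Hence $H^i_*(\sK)=0$ for $1\le i\le n-1$, i.e.\ $\sK$ is an ACM bundle.

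For (3), given $\sigma:\sE_1\to\sE_2$, Theorem \ref{Horrocks data}(1) provides a free bundle $\sZ$ and a commuting square lifting $\sigma$ to $\sF_1\to\sF_2\oplus\sZ$ over $(\beta_2,\ast)$; since $H^0_*(\beta_2)$ is surjective (part of the hypothesis under which the $\eta$ sequence of $\sE_2$ is defined), Theorem \ref{Horrocks data}(2) allows $\sZ=0$, yielding $\tau:\sF_1\to\sF_2$ with $\beta_2\tau=\sigma\beta_1$. Then $\tau$ carries $\sK_1=\ker\beta_1$ into $\sK_2=\ker\beta_2$, so it restricts to $\bar\tau:\sK_1\to\sK_2$, and $(\bar\tau,\tau,\sigma)$ is the desired morphism $\eta_1\to\eta_2$.

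For (2), I would apply (3) to $\mathrm{id}_\sE$ for two Horrocks data $(\sF_1,\beta_1)$, $(\sF_2,\beta_2)$ (each with surjective $H^0_*$), in both directions, producing $\tau:\sF_1\to\sF_2$ and $\tau':\sF_2\to\sF_1$ over $\mathrm{id}_\sE$, each inducing isomorphisms on $H^i_*$ for $1\le i\le n-1$. Writing $\sF_i=\sF_{i,\min}\oplus\sL_i$ with $\sL_i$ free and $\sF_{i,\min}$ minimal, Proposition \ref{min-F} shows that the induced maps on the minimal summands are isomorphisms and that $\tau'\tau$, $\tau\tau'$ restrict to isomorphisms of $\sF_{1,\min}$, $\sF_{2,\min}$; then, exactly as in the proof of Theorem \ref{gamma}(3), one stabilizes by adding to $\eta_1$ and $\eta_2$ trivial sequences $\zeta_i$ of the form $0\to\sZ_i\to\sZ_i\to 0\to 0$ so that the middle bundles become identified and the comparison map becomes a genuine isomorphism of short exact sequences, yielding $\eta_1\oplus\zeta_1\cong\eta_2\oplus\zeta_2$. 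I expect the only genuine obstacle to be this last stabilization in (2) — promoting the comparison to an isomorphism of the whole short exact sequences, with the maps to the fixed bundle $\sE$ kept compatible, rather than merely of the Horrocks data bundles; parts (1) and (3) are routine diagram chases once the earlier results in the section are granted.
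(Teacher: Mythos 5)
Your proposal is correct and follows the route the paper intends: the paper's own proof consists only of the remark that ``the proof is easy,'' together with the observation that the map $\eta_1\to\eta_2$ depends on the lift of $\sigma$ supplied by Theorem \ref{Horrocks data}, which is exactly how you handle parts (2) and (3). Your long exact sequence argument for part (1) (including the preliminary check that $\beta$ is surjective as a map of sheaves, so that $\sK$ is locally free) and the stabilization via Proposition \ref{min-F} for part (2) supply precisely the details the paper omits.
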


\begin{proof}The proof is easy. We just mention that the induced map $\eta_1 \to \eta_2$ depends on the choice of a map
from $\sF_1$ to $\sF_2$ that lifts $\sigma$ (as obtained from Theorem \ref{Horrocks data}).
\end{proof}

\begin{theorem}\label{diag}{{\bf(diagram of $\sE$)}} Let $(\sF, \beta)$ be a  Horrocks datum for the bundle $\sE$ such that
$H^0_*(\beta)$ is surjective. The $\gamma$ and $\eta$ sequences of $\sE$ fit into a \underbar{diagram} for $\sE$
$$\begin{array}{ccccccc}
&  & &0 & &0&   \\
&  &  &\downarrow & &\downarrow &    \\
&  & & \sK                &= &\sK &    \\
&  & & \downarrow\alpha & &\downarrow &     \\
& \Psi:\qquad &0 \to &\sF          &\xrightarrow{g} & \sP^0 & \xrightarrow {} \sG  \to 0\\
& & &\downarrow\beta & &\downarrow & \|    \\
& \gamma:\qquad & 0 \to &\sE& \xrightarrow{f} & \sA&\xrightarrow{} \sG \to 0\\
& & &\downarrow & &\downarrow &    \\
& & & 0 & &0&   \\
\\
& & & \eta & &\Delta&   \\
 \end{array}$$
Given a morphism $\sigma: \sE_1 \to \sE_2$, there is an induced map from the diagram of $\sE_1$ to the diagram of $\sE_2$.
\end{theorem}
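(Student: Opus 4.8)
The plan is to extract the whole diagram from the single push-out square that defines $\gamma$ --- this is Buchweitz's octahedron in the present setting --- and then to obtain the functoriality by feeding a morphism $\sigma$ through the functorial statements already established in Theorems \ref{gamma} and \ref{eta}.

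First I would build the diagram. The rows $\Psi$ and $\gamma$ are short exact by their construction, and the left column $\eta:\ 0\to\sK\xrightarrow{\alpha}\sF\xrightarrow{\beta}\sE\to 0$ is exact with $\sK$ an ACM bundle by Theorem \ref{eta}; here the hypothesis that $H^0_*(\beta)$ is surjective is used precisely to know that $\beta$ is an epimorphism of sheaves. Since $\gamma$ is the push-out of $\Psi$ along $\beta$, I would write $\sA=(\sP^0\oplus\sE)/\sF$ with $\sF$ embedded by $(g,-\beta)$. A direct computation then gives $\ker[\sP^0\to\sA]=g\alpha(\sK)$, which is isomorphic to $\sK$ since $g\alpha$ is a monomorphism, and $\coker[\sP^0\to\sA]=\coker\beta=0$ because $\beta$ is epi. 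Hence the right column $\Delta:\ 0\to\sK\to\sP^0\to\sA\to 0$ is short exact, the map $\sK\to\sP^0$ being $g\alpha$. Commutativity of the top square is then exactly this description of $\sK\to\sP^0$, and commutativity of the middle square is the universal property of the push-out; that $\sA$ is ACM and $H^{n-1}_*(\sG)=0$ is part of Theorem \ref{gamma}. This assembles the diagram.

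For the functoriality, fix Horrocks data $(\sF_i,\beta_i)$ for $\sE_i$ with $H^0_*(\beta_i)$ surjective and a morphism $\sigma:\sE_1\to\sE_2$. Theorem \ref{gamma}(1), applied with $\lambda=0$ (legitimate because $H^0_*(\beta_2)$ is surjective, so the free bundle of Theorem \ref{Horrocks data} vanishes), produces a commuting box $\Psi_1\to\Psi_2$, $\gamma_1\to\gamma_2$; in particular it produces maps $\phi:\sF_1\to\sF_2$ with $\beta_2\phi=\sigma\beta_1$, $\psi:\sP^0_1\to\sP^0_2$ with $\psi g_1=g_2\phi$, $\sG_1\to\sG_2$, and $\sA_1\to\sA_2$. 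Since $\beta_2\phi\alpha_1=\sigma\beta_1\alpha_1=0$, the map $\phi$ carries $\sK_1=\ker\beta_1$ into $\sK_2=\ker\beta_2$; this restriction $\sK_1\to\sK_2$ together with $\phi$ and $\sigma$ is the morphism $\eta_1\to\eta_2$ of Theorem \ref{eta}(3), and the same restriction together with $\psi$ and $\sA_1\to\sA_2$ gives a morphism $\Delta_1\to\Delta_2$ (use $\psi g_1=g_2\phi$ to see that $\psi$ carries $g_1\alpha_1(\sK_1)$ into $g_2\alpha_2(\sK_2)$). Since every arrow of the target diagram has now been produced from the one box of Theorem \ref{gamma}(1), a routine chase shows they glue to a morphism of the whole diagram of $\sE_1$ into that of $\sE_2$.

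The only delicate points are bookkeeping. First, one must be sure the free-bundle ambiguity $\sZ$ of Theorem \ref{Horrocks data}(1) can indeed be dropped, which is where surjectivity of $H^0_*(\beta_2)$ enters; without it one obtains a morphism only after adding a trivial sequence $0\to\sZ\to\sZ\to 0\to 0$ to the diagram of $\sE_2$. Second, one must check that every arrow of the image diagram is induced by the \emph{same} lift $\phi$ of $\sigma$, so that the squares tying $\eta$, $\Psi$, $\gamma$, $\Delta$ together commute simultaneously; this is immediate once $\phi$ is taken to be the one supplied by Theorem \ref{gamma}(1). Beyond these two points the argument is the standard $3\times 3$-lemma formalism applied to a push-out, so I do not anticipate a genuine obstacle.
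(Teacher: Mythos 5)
Your proposal is correct and follows essentially the same route as the paper: the paper also obtains the diagram directly from the push-out defining $\gamma$ (it simply declares the existence "clear" where you verify the $3\times 3$ formalism explicitly), and for functoriality it likewise hinges everything on a single lift $\sF_1\to\sF_2$ of $\sigma$ (with the free summand $\sZ$ suppressed via surjectivity of $H^0_*(\beta_2)$), from which the maps of $\eta$, $\Psi$, $\gamma$ and $\Delta$ are all induced. Your write-up just fills in the bookkeeping that the paper leaves to the reader.
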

\begin{proof} While the existence of the diagram is clear, the map from diagram of $\sE_1$ to the diagram of $\sE_2$ with
appropriate commuting boxes exists because the choice of a map from $\sF_1$ to $\sF_2$ that lifts $\sigma$ will determine
$\eta_1 \to \eta_2$ and then allows a choice of a map $\Psi_1 \to \Psi_2$. This now gives the commuting  box of short exact
sequences of Theorem \ref{gamma}.
\end{proof}

The following is a criterion for obtaining a map between two $\gamma$-sequences.

\begin{proposition}\label{gamma-map} Let $\sE, \sE'$ be two vector bundles with the same (minimal) Horrocks data bundle
$\sF_{min}$, and Horrocks data $(\sF_{min}, \beta), (\sF_{min},\beta')$. Let $\sB_1, \sB_2,\dots, \sB_k$ be the distinct
non-free irreducible ACM bundles (up to twists by $\sO_{X}(a)$) that appear as summands in the middle term $\sA_{\sE}$ of the
$\gamma$-sequence of $\sE$. For each $\sB_i$, let $V_i$ be the kernel of the map $H^1_*(\beta \otimes 1_{\sB_i^\vee})$ from
$H^1_*(\sF_{min}\otimes \sB_i^\vee) \to H^1_*(\sE\otimes \sB_i^\vee)$, and let $V_i'$ be the same with $\beta$ replaced by
$\beta'$. If $V_i \subseteq V_i'$ for all $i$, then there exists a map $\phi: \sE \to \sE'$ such that the $\gamma$-sequence of
$\sE'$ is the push out by $\phi$ of the $\gamma$-sequence for $\sE$.
\end{proposition}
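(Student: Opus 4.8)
The plan is to build the map $\phi:\sE\to\sE'$ as a lift of a carefully chosen map between the two $\gamma$-sequences, where the hypothesis $V_i\subseteq V_i'$ is exactly what lets us produce such a map of middle terms. First I would recall from Theorem~\ref{gamma} that both $\gamma$-sequences share the \emph{same} cokernel term $\sG=\tilde G$ (since both come from the same minimal Horrocks data complex $P^\centerdot$ for $\sF_{min}$), so we have
$$0\to\sE\xrightarrow{f}\sA_\sE\xrightarrow{g}\sG\to 0,\qquad 0\to\sE'\xrightarrow{f'}\sA_{\sE'}\xrightarrow{g'}\sG\to 0,$$
with $\sA_\sE,\sA_{\sE'}$ both ACM. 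The strategy is to construct a commuting map of short exact sequences that is the identity on $\sG$, i.e.\ a map $\mu:\sA_\sE\to\sA_{\sE'}$ with $g'\mu=g$; such a $\mu$ restricts to a map $\sE\to\sE'$ on kernels, which will be our $\phi$, and then by construction $\gamma_{\sE'}$ is the pushout of $\gamma_\sE$ by $\phi$. Existence of $\mu$ lifting $1_\sG$ is governed by $\Ext^1_X(\sG,\sE')$, or more precisely by whether the class of $\gamma_\sE$ maps into the class of $\gamma_{\sE'}$ under a suitable map. So the core issue is to identify an obstruction group and show the hypothesis on the $V_i$ kills the obstruction.

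The key computation is to decompose everything along the ACM summands. Since $\sA_\sE$ is ACM and $X$ is smooth ACM, write $\sA_\sE=\sA_{0}\oplus\big(\bigoplus_i\bigoplus_j\sB_i(a_{ij})\big)$ where $\sA_0$ is free; similarly for $\sA_{\sE'}$. The map $1_\sG$ lifts along $f'$ iff the pullback of the extension class of $\gamma_{\sE'}$ along the identity agrees, after a diagram chase, with being able to factor; concretely I would look at the long exact sequence obtained by applying $\Hom_X(-,\sE')$ to $\gamma_\sE$ and at $\Hom_X(-,\sE')$ applied to $\gamma_{\sE'}$, and compare the connecting maps into $\Ext^1_X(\sG,\sE')$. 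The point is that the "defect" measuring failure to lift is naturally a quotient of $\bigoplus_i \Ext^1_X(\sB_i(a_{ij}),\sE')$-type pieces, and by adjunction and the identification of $\Ext^1$ with $H^1_*$ of a twisted bundle (using that $\sB_i$ is a bundle so $\mathcal{E}xt$ sheaves vanish), these pieces are exactly the modules $H^1_*(\sE'\otimes\sB_i^\vee)$ in the relevant degrees. Meanwhile the data $V_i$ (resp.\ $V_i'$) records which classes in $H^1_*(\sF_{min}\otimes\sB_i^\vee)$ already die in $H^1_*(\sE\otimes\sB_i^\vee)$ (resp.\ in $H^1_*(\sE'\otimes\sB_i^\vee)$); the inclusion $V_i\subseteq V_i'$ says every obstruction that $\sE$ already resolves is also resolved by $\sE'$, which is precisely the statement that the obstruction to lifting $1_\sG$ through $\gamma_\sE$ and $\gamma_{\sE'}$ vanishes.

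I would organize the proof in the following steps: (1) reduce to constructing $\mu:\sA_\sE\to\sA_{\sE'}$ with $g'\mu=g$ and set $\phi=\mu|_\sE$; (2) by functoriality of the $\gamma$-construction (Theorem~\ref{gamma}(1)) applied to $\beta$ and $\beta'$ into a common target, reduce the existence of $\mu$ to a statement purely about the maps $\beta,\beta':\sF_{min}\to\sE,\sE'$ and their push-outs of the \emph{same} sequence $\Psi$; (3) identify the obstruction to finding $\mu$ with an element of $\bigoplus_i \Hom_A\big(V_i'/(V_i\cap V_i'),\,\text{(something)}\big)$-style group — in fact with the natural map that is zero precisely when $V_i\subseteq V_i'$ — by writing $\Ext^1_X(\sB_i(a),\sE')\cong H^1_*(\sE'\otimes\sB_i^\vee)_{-a}$ and tracking the extension classes; (4) conclude $\mu$ exists, hence $\phi$, and that $\gamma_{\sE'}$ is the pushout of $\gamma_\sE$ along $\phi$ directly from $g'\mu=g$ and the snake lemma. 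The main obstacle is Step~(3): pinning down the obstruction group precisely and verifying that the relevant connecting homomorphism, under the $\Ext$--$H^1_*$ identification, sends the extension class of $\gamma_\sE$ to (a representative of) the $V_i$'s and that of $\gamma_{\sE'}$ to the $V_i'$'s, so that $V_i\subseteq V_i'$ is literally the vanishing of the obstruction. This requires care because the $\gamma$-sequences are only well-defined up to stabilization by free summands (Theorem~\ref{gamma}(3)), so I would first fix compatible representatives using the same $\Psi$ before making the $\Ext$ computation, and note that adding free summands to $\sA_\sE$ or $\sA_{\sE'}$ changes none of the $\Ext^1_X(\sB_i(a),-)$ groups and hence does not affect the obstruction.
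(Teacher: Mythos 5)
Your proposal follows essentially the same route as the paper's proof: both $\gamma$-sequences are pushouts of the single sequence $\Psi$ along $\beta,\beta'$, and the problem reduces (via the long exact sequence of $\Hom(-,\sE')$ applied to $\gamma$ and to $\Psi$) to showing that the class of $\gamma'$ dies in $\Ext^1(\sA_{\sE},\sE')$, which decomposes along the ACM summands of $\sA_{\sE}$ into pieces of $H^1_*(\sE'\otimes\sB_i^\vee)$. The step you flag as the main obstacle is resolved in the paper by tracking not the extension class itself but the element $\rho=g\in\Hom(\sA_{\sE},\sG_{min})$: since $\rho$ lifts to $1_{\sA_{\sE}}$, its image under the connecting map of $\gamma\otimes\sA_{\sE}^\vee$ vanishes, hence its image under the connecting map of $\Psi\otimes\sA_{\sE}^\vee$ lies in $\ker H^1_*(\beta\otimes 1_{\sA_{\sE}^\vee})$, which is built from the $V_i$ in suitable twists (the free summands of $\sA_{\sE}$ contributing nothing because $H^1_*(\beta)$ is an isomorphism there); the hypothesis $V_i\subseteq V_i'$ then places it in $\ker H^1_*(\beta'\otimes 1_{\sA_{\sE}^\vee})$, i.e.\ $\rho^*\gamma'=0$, which is precisely the vanishing of your obstruction.
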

\begin{proof}
Since the $\gamma$-sequences $\gamma, \gamma'$ are push-outs by $\beta, \beta'$ of the $\Psi$-sequence for $\sF_{min}$:
$$ \Psi:     0 \to \sF_{min} \rightarrow  \sP^0   \rightarrow \sG_{min}  \to 0, $$
in the commutative diagram
\[ \begin{CD}
\Hom (\sP^0, \E')\to  & \Hom(\sF_{min}, \sE') &\xrightarrow{\delta(\Psi)} &\Ext^1(\sG_{min}, \sE') &\xrightarrow{} \Ext^1(\sP^0, \sE')\\
& \uparrow \beta & & || &  \uparrow \\
& \Hom(\sE, \sE')  &\xrightarrow{ \delta(\gamma)} &\Ext^1(\sG_{min}, \sE') &\xrightarrow{} \Ext^1(\sA_{\sE}, \sE'), \\
\end{CD}
\]
it suffices to show that $\gamma' \in \Ext^1(\sG_{min}, \sE')$ maps to zero in $\Ext^1(\sA_{\sE}, \sE')$. For then there is an element
$\sigma \in \Hom(\sE, \sE')$ such that $\sigma \circ \beta$ differs from $\beta'$ by a map that factors through $\sP^0$.

Let $\rho: \sA_{\sE} \to \sG_{min}$ be the map occurring in the $\gamma$-sequence of $\sE$. Then under the connecting homomorphism
for $\gamma \otimes \sA_{\sE}^\vee$, $\rho$ maps to zero under $H^0_*(\sG_{min}\otimes \sA_{\sE}^\vee) \to
H^1_*(\E \otimes \sA_{\sE}^\vee)$. Hence under the connecting homomorphism of $\Psi \otimes \sA_{\sE}^\vee$, $\rho$ maps to the
the kernel of $H^1_*(\sF_{min}\otimes \sA_{\sE}^\vee) \to H^1_*(\sE \otimes \sA_{\sE}^\vee)$. By the assumption $V_i \subseteq V_i'$
for all $i$, $\rho$ also maps to the kernel of $H^1_*(\sF_{min}\otimes \sA_{\sE}^\vee) \to H^1_*(\sE' \otimes \sA_{\sE}^\vee)$. It
follows that the pullback of $\gamma'$ by $\rho$ splits, which was the desired result.
\end{proof}

This criterion leads to an isomorphism theorem on $X$:

\begin{theorem}\label{iso-thm}{\bf(Isomorphism theorem.)}Let $\sE, \sE'$ be two vector bundles on $X$, with the same minimal Horrocks
data bundle $\sF_{min}$ and Horrocks data $(\sF_{min}, \beta), (\sF_{min},\beta')$. Let $\sB_1, \sB_2,\dots, \sB_k$ be
the distinct non-free irreducible ACM bundles (up to twists by
$\sO_{X}(a)$) that appear as summands in either of the middle terms $\sA_{\sE}, \sA_{\sE'}$ of the $\gamma$-sequences of $\sE$, $\sE'$.
If for each $i$, the kernel of $H^1_*(\beta \otimes 1_{\sB_i^\vee})$ equals the kernel of $H^1_*(\beta' \otimes 1_{\sB_i^\vee})$
and if $\sE$ and $\sE'$ have no ACM summands, then $\sE \cong \sE'$.
\end{theorem}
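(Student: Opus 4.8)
The plan is to apply Proposition~\ref{gamma-map} twice, in both directions, and then use the resulting maps together with Proposition~\ref{min-F} to produce an isomorphism. First, observe that the hypotheses are symmetric in $\sE$ and $\sE'$: for every $\sB_i$ in the combined list, $\ker H^1_*(\beta\otimes 1_{\sB_i^\vee}) = \ker H^1_*(\beta'\otimes 1_{\sB_i^\vee})$, so in particular $V_i\subseteq V_i'$ and $V_i'\subseteq V_i$ in the notation of Proposition~\ref{gamma-map}, for any $\sB_i$ appearing in $\sA_\sE$ or in $\sA_{\sE'}$. Hence Proposition~\ref{gamma-map} yields a map $\phi:\sE\to\sE'$ making $\gamma'$ the push-out of $\gamma$ along $\phi$, and a map $\psi:\sE'\to\sE$ making $\gamma$ the push-out of $\gamma'$ along $\psi$.

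Next I would chase the composite $\psi\circ\phi:\sE\to\sE$. Pushing out $\gamma$ along $\phi$ and then along $\psi$ gives back $\gamma$ itself (up to the free-bundle ambiguity of Theorem~\ref{gamma}(3)); so $\psi\circ\phi$ induces the identity on the cokernel side, i.e. on $\sG_{min}$, which forces $\psi\circ\phi$ to lift to an endomorphism of the $\eta$-sequence $0\to\sK\to\sF_{min}\xrightarrow{\beta}\sE\to 0$, hence to a map $\sF_{min}\to\sF_{min}$ over it. Since $\beta$ induces isomorphisms on $H^i_*$ for $1\le i\le n-1$, this lifted endomorphism of $\sF_{min}$ induces the identity on $H^i_*(\sF_{min})$ for those $i$; by Proposition~\ref{min-F} it is therefore an automorphism of $\sF_{min}$. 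Running the argument the other way shows $\phi\circ\psi$ similarly lifts to an automorphism of $\sF_{min}$ sitting over it. The upshot is that on each bundle, $\psi\circ\phi$ and $\phi\circ\psi$ differ from the identity by maps that factor through a free (hence ACM) bundle.

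To finish, I would pass to the stable category: modulo maps factoring through free bundles, $\phi$ and $\psi$ are mutually inverse, so $\sE$ and $\sE'$ are stably isomorphic. Concretely, $\psi\circ\phi = 1_\sE + (\text{map through a free bundle})$ and likewise for $\phi\circ\psi$, and since $\sE$ has no ACM (in particular no free) summands, a standard argument—using that a map $\sE\to\sL\to\sE$ through a free bundle $\sL$ composed with an isomorphism is still not invertible unless one splits off the corresponding free summand—shows $\psi\circ\phi$ and $\phi\circ\psi$ are themselves isomorphisms, whence $\phi$ is an isomorphism $\sE\xrightarrow{\sim}\sE'$. The cleanest way to make this last step rigorous is to add a free bundle $\sZ$ to both sides so that the lifted maps on $\sF_{min}\oplus\sZ$ are honest automorphisms, deduce $\sE\oplus\sZ'\cong\sE'\oplus\sZ''$ for free $\sZ',\sZ''$, and then cancel the free summands using the hypothesis that neither $\sE$ nor $\sE'$ has an ACM summand (free summands on $X$ are ACM, and Krull–Schmidt-type cancellation applies to vector bundles of finite rank).

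The main obstacle is the passage from ``stably isomorphic'' to ``isomorphic,'' i.e. controlling the free-bundle ambiguity that Theorem~\ref{gamma} and Proposition~\ref{gamma-map} inevitably introduce. The hypothesis that $\sE$ and $\sE'$ have no ACM summands is exactly what is needed to cancel those free summands, but one must be careful that the maps produced by the two applications of Proposition~\ref{gamma-map} can be chosen compatibly (same $\sF_{min}$, same $\Psi$-sequence) so that the composites genuinely reduce to endomorphisms of $\sF_{min}$ rather than of some enlargement; absorbing a single free bundle $\sZ$ uniformly on both sides at the outset handles this bookkeeping.
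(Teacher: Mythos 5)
Your two-sided strategy (apply Proposition~\ref{gamma-map} in both directions and show the composites $\psi\circ\phi$, $\phi\circ\psi$ are automorphisms) is genuinely different from the paper's proof, which goes one way only and shows directly that the induced map $\sigma_1:\sA_{\sE}\to\sA_{\sE'}$ is a split surjection by chasing the identity section in $H^0(\sB\otimes\sB^\vee)$ for each irreducible ACM summand $\sB$ of $\sA_{\sE'}$. Your strategy is salvageable, but as written it has a genuine gap at the decisive step. From the exact sequence $\Hom(\sA_{\sE},\sE)\to\Hom(\sE,\sE)\xrightarrow{\delta(\gamma)}\Ext^1(\sG_{min},\sE)$, together with the fact that $\gamma$ pushed out by $\psi\circ\phi$ and by $1_{\sE}$ give the same class, what you actually obtain is that $\psi\circ\phi-1_{\sE}$ factors through $\sA_{\sE}$. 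That bundle is ACM but is not free in general: its summands include the non-free irreducible bundles $\sB_i$, which are the whole point of the theorem. Your claim that the difference is ``a map through a free bundle,'' and the ensuing cancellation of free summands, therefore handle only the trivial part of the ambiguity and would fail whenever some $\sB_i$ is non-free (e.g.\ a spinor bundle on a quadric). Relatedly, your detour through the $\eta$-sequence is not available in general: $\eta$ requires $H^0_*(\beta)$ to be surjective, which can fail for a minimal Horrocks datum (the paper's Veronese example $\Omega^1_{\mathbb P^5}|_{\sV}\to\Omega^1_{\sV}$ is exactly such a case), and even when it exists, an automorphism of $\sF_{min}$ lying over $\psi\circ\phi$ only controls $\psi\circ\phi$ on the image of $\beta$.

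The repair is to replace ``free'' by ``ACM'' throughout the endgame: let $I\subseteq\End(\sE)$ be the ideal of endomorphisms factoring through ACM bundles. Since $\sE$ has no ACM summand, no map between indecomposable summands of $\sE$ that factors through an ACM bundle can be an isomorphism (it would exhibit that summand as a direct summand of an ACM bundle, hence ACM), so $I$ lies in the radical of the finite-dimensional algebra $\End(\sE)$ and $1_{\sE}+\theta$ is a unit for every $\theta\in I$. The same applies to $\phi\circ\psi$, and then $\phi$ is an isomorphism. With that correction your argument closes, and it is arguably cleaner than the paper's cohomological chase; but the version you wrote does not use the full strength of the hypothesis that $\sE,\sE'$ have no ACM (as opposed to merely no free) summands, and that is precisely where it breaks.
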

\begin{proof}
If $\sF$ is free, $\sE, \sE'$ are ACM and the theorem does not apply. So we will assume that $\sF_{min}$ is a non-free minimal
Horrocks data bundle. By applying Proposition \ref{gamma-map}, there exists a homomorphism $\sigma: \sE \to \sE'$ and a
commutative diagram of $\gamma$-sequences
 \[ \begin{CD}
0 \to & \sE \to &\sA_{\sE} \to &\sG_{min} &\to 0\\
      & \downarrow \sigma & \downarrow \sigma_1 & ||&\\
0 \to & \sE' \to &\sA_{\sE'} \to &\sG_{min} &\to 0.
\end{CD}
\]

Tensor the diagram by $\sB^\vee$, where $\sB$ will stand for any of the distinct irreducible ACM bundles (up to twists by
$\sO_{X}(a)$) that appear as summands in $\sA_{\sE'}$, including the possible free line bundle $\sO_X$.
In the induced diagram of cohomology, we get

 \[ \begin{CD}
0 \to & H^0_*(\sE\otimes \sB^\vee) \to &H^0_*(\sA_{\sE}\otimes \sB^\vee) \to &H^0_*(\sG_{min}\otimes \sB^\vee) \to
&H^1_*(\sE\otimes \sB^\vee) &\to &H^1_*(\sA_{\sE}\otimes \sB^\vee)\\
      & \downarrow \sigma & \downarrow \sigma_1 & ||& \downarrow \sigma & & \downarrow \sigma_1\\
0 \to & H^0_*(\sE'\otimes \sB^\vee) \to &H^0_*(\sA_{\sE'}\otimes \sB^\vee) \to &H^0_*(\sG_{min}\otimes \sB^\vee) \to
&H^1_*(\sE'\otimes \sB^\vee) &\to &H^1_*(\sA_{\sE'}\otimes \sB^\vee).\\
\end{CD}
\]

The map $H^0_*(\sG_{min}\otimes \sB^\vee) \to H^1_*(\sE\otimes \sB^\vee)$ factors through $H^1_*(\sF \otimes
\sB^\vee)$, since the $\gamma$ is the pushout of $\Psi$ by $\beta$. The condition of equality of kernels for
$H^1_*(\beta \otimes 1_{\sB^\vee})$ and $H^1_*(\beta' \otimes 1_{\sB^\vee})$ implies that the kernel in
$H^0_*(\sG_{min}\otimes \sB^\vee)$ is the same for $\sE$ and $\sE'$. Therefore the mapping cone map $H^0_*(\sE'\otimes
\sB^\vee) \oplus H^0_*(\sA_{\sE}\otimes \sB^\vee) \to H^0_*(\sA_{\sE'}\otimes \sB^\vee)$ is surjective. Viewing
each summand $\sB$ of $\sA_{\sE'}$, the identity global section in $H^0(\sB \otimes \sB^\vee)$ is in the image
of this surjection. It cannot be in the image of $H^0_*(\sE'\otimes \sB^\vee)$ since $\sE'$ does not have $\sB$ as a
summand. Hence it is in the image of some $\sB'$ term in $\sA_{\sE}$. This forces $\sB'$ to equal $\sB$ and the map
$\sigma_1: \sA_{\sE} \to \sA_{\sE'}$ has to split over this $\sB$ term in $\sA_{\sE'}$.

It follows that $\sigma_1$ is a (split) surjection. Hence  $\sigma:\sE \to \sE'$ is onto. The roles of $\sE, \sE'$ can be
interchanged, showing that they are bundles of the same rank. Hence $\sigma: \sE \cong \sE'$.
\end{proof}

The following theorem is in the same vein, and extends Proposition \ref{min-F}:

\begin{theorem}\label{iso-thm2} Let $\sigma: \sE \to \sE'$ be a sheaf homomorphism between two vector bundles on $X$. Suppose that
$\sigma$  induces isomorphisms $H^i_*(\sE) \to H^i_*(\sE')$ for $1 \leq i \leq n-1$ and also for each non-free irreducible ACM
bundle $\sB$ appearing in $\sA_{\sE'}$, suppose that the induced map $H^1_*(\sE \otimes \sB^\vee) \to
H^1_*(\sE' \otimes \sB^\vee)$ is an isomorphism. Then $\sigma$ is a split surjection decomposing $\sE$ into $\sE' \oplus \sC$
where $\sC$ is an ACM bundle.
\end{theorem}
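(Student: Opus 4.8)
The plan is to mimic the argument of Theorem \ref{iso-thm}, but instead of building the map $\sigma$ from Proposition \ref{gamma-map} we are handed it, and instead of comparing two $\gamma$-sequences with a common $\sG_{min}$ we must first manufacture a common right-hand term. First I would invoke Theorem \ref{Horrocks data}: since $\sigma$ induces isomorphisms on $H^i_*$ for $1\le i\le n-1$, and $\sF$, $\sF'$ are (minimal) Horrocks data bundles for $\sE$, $\sE'$, Proposition \ref{min-F} shows $\sF\cong\sF'$ up to adding free bundles; after replacing both by a common minimal Horrocks data bundle $\sF_{min}$ and adjusting by free summands (which only changes $\sE$ by a free summand, absorbed into $\sC$), I may assume $\sE$ and $\sE'$ have the same minimal Horrocks data bundle $\sF_{min}$ and that $\sigma$ lifts to a map of $\eta$-sequences, hence of diagrams (Theorem \ref{diag}), in particular a map of $\gamma$-sequences $\gamma_\sE\to\gamma_{\sE'}$ covering a map $\sG_{min}\to\sG_{min}$ — which, since both $\gamma$'s are pushouts of the \emph{same} $\Psi$-sequence, may be taken to be the identity on $\sG_{min}$. (If $H^0_*(\beta')$ is not surjective one passes to a surjective Horrocks datum by Theorem \ref{gamma}(1), again at the cost of a free summand.) This produces exactly the commutative ladder of $\gamma$-sequences that opens the proof of Theorem \ref{iso-thm}.

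From there the argument is the one already written: tensor the ladder
$$\begin{CD}
0 @>>> \sE @>>> \sA_{\sE} @>>> \sG_{min} @>>> 0\\
@. @VV{\sigma}V @VV{\sigma_1}V @| @.\\
0 @>>> \sE' @>>> \sA_{\sE'} @>>> \sG_{min} @>>> 0
\end{CD}$$
by $\sB^\vee$ for each non-free irreducible ACM summand $\sB$ of $\sA_{\sE'}$ (and for $\sB=\sO_X$), take the long exact cohomology sequences, and use the hypothesis that $H^1_*(\sE\otimes\sB^\vee)\to H^1_*(\sE'\otimes\sB^\vee)$ is an isomorphism. The connecting map $H^0_*(\sG_{min}\otimes\sB^\vee)\to H^1_*(\sE\otimes\sB^\vee)$ factors through $H^1_*(\sF_{min}\otimes\sB^\vee)$ because $\gamma$ is the pushout of $\Psi$ by $\beta$; the isomorphism hypothesis on $H^1_*$ (together with the isomorphism on $H^1_*(\sF_{min}\otimes\sB^\vee)\to H^1_*(\sE\otimes\sB^\vee)$, which needs $H^0_*(\beta)$ surjective — arranged above) forces the kernels in $H^0_*(\sG_{min}\otimes\sB^\vee)$ for $\sE$ and $\sE'$ to coincide, so the mapping-cone map $H^0_*(\sE'\otimes\sB^\vee)\oplus H^0_*(\sA_{\sE}\otimes\sB^\vee)\to H^0_*(\sA_{\sE'}\otimes\sB^\vee)$ is surjective. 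Then exactly as in Theorem \ref{iso-thm}: the identity section of $\sB\otimes\sB^\vee$ inside $H^0(\sA_{\sE'}\otimes\sB^\vee)$ lifts, and (since one may further reduce to the case $\sE'$ has no ACM summands, absorbing any such summand into $\sC$) it cannot come from $H^0_*(\sE'\otimes\sB^\vee)$, hence comes from a matching $\sB$-summand of $\sA_{\sE}$; this shows $\sigma_1$ splits over every summand of $\sA_{\sE'}$, so $\sigma_1$ is a split surjection, whence $\sigma$ is surjective. Let $\sC=\ker\sigma$; from the ladder $\sC$ injects into $\ker\sigma_1$, a direct summand of $\sA_{\sE}$ and hence ACM, and a diagram chase shows $\sC\to\ker\sigma_1$ has ACM cokernel (a summand of $\sA_{\sE'}$), so $\sC$ is ACM. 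Finally $\Ext^1(\sE',\sC)$ controls the splitting of $0\to\sC\to\sE\to\sE'\to0$; since the extension restricts to the trivial class on each relevant ACM summand by the kernel-equality just established — equivalently, since $\sigma_1$ splits compatibly — the sequence splits and $\sE\cong\sE'\oplus\sC$.

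The main obstacle I anticipate is the bookkeeping needed to \emph{legitimately reduce to the situation of Theorem \ref{iso-thm}}: one is given $\sigma$ abstractly, not a common $\sG_{min}$, so one must (i) replace $\sF,\sF'$ by a genuinely common minimal Horrocks data bundle and track that the free summands introduced do not affect the final statement (they get swept into $\sC$), (ii) ensure $H^0_*(\beta)$ and $H^0_*(\beta')$ can be taken surjective, again via Theorem \ref{gamma}(1) and free additions, and (iii) check that the lifted map of $\gamma$-sequences can be arranged to be the identity on $\sG_{min}$ rather than merely \emph{some} self-map of $\sG_{min}$ — this uses that both $\gamma$'s are pushouts of the one fixed $\Psi$ and the functoriality in Theorem \ref{diag}. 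Once that reduction is in place, the cohomological core is essentially a transcription of the proof of Theorem \ref{iso-thm}, with the roles of $\sE,\sE'$ now \emph{not} symmetric (we only get surjectivity of $\sigma$, not of its reverse), which is precisely why the conclusion is $\sE\cong\sE'\oplus\sC$ rather than $\sE\cong\sE'$. A secondary subtlety is verifying that the kernel $\sC$ of the sheaf map $\sigma$ is a vector bundle and ACM; this follows since $\sE,\sE'$ are locally free and $\sigma$ is surjective, so $\sC$ is locally free, and the snake lemma applied to the tensored ladder exhibits $H^i_*(\sC)$ as trapped between the (equal) $H^i_*$ of $\sE$ and $\sE'$ for $1\le i\le n-1$, forcing it to vanish.
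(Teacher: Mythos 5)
Your proposal follows essentially the same route as the paper's proof: lift $\sigma$ to the minimal Horrocks data bundles, use Proposition \ref{min-F} to identify them, obtain the ladder of $\gamma$-sequences over the identity on $\sG_{min}$, and then rerun the cohomological argument of Theorem \ref{iso-thm} to split $\sigma_1$ and hence $\sigma$. The extra reductions you worry about (common $\sF_{min}$, surjectivity of $H^0_*(\beta)$, ACM summands of $\sE'$) are exactly the points the paper dispatches by citing Theorems \ref{Horrocks data} and \ref{gamma}, so your bookkeeping is consistent with, if slightly more cautious than, the published argument.
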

\begin{proof}
By Theorem \ref{Horrocks data}, $\sigma$ can be lifted to a map $\tilde\sigma: \sF_{min} \to \sF'_{min}$ of minimal
Horrocks data bundles. Since $H^i_*(\tilde\sigma)$ is an isomorphism for $1 \leq i \leq n-1$, $\tilde\sigma$ is an isomorphism.
So for convenience, we may assume that $\sF_{min} = \sF'_{min}$, and according to Theorem \ref{gamma}, $\sigma$ induces a map
of $\gamma$-sequences
 \[ \begin{CD}
0 \to & \sE \to &\sA_{\sE} \to &\sG_{min} &\to 0\\
      & \downarrow \sigma & \downarrow \sigma_1 & ||&\\
0 \to & \sE' \to &\sA_{\sE'} \to &\sG_{min} &\to 0.
\end{CD}
\]
For each $\sB$ appearing in $\sA_{\sE'}$, as in the proof of the previous theorem, after tensoring by $\sB^\vee$ we can look
at the diagram of cohomology. Since $H^1_*(\sE \otimes \sB^\vee) \to H^1_*(\sE' \otimes \sB^\vee)$ is an isomorphism,
the kernel in
$H^0_*(\sG_{min}\otimes \sB^\vee)$ is the same for $\sE$ and $\sE'$. The previous argument repeats to show that the
homomorphism $\sigma_1: \sA_{\sE} \to \sA_{\sE'}$ is a split surjection, with a kernel $\sC$ which is ACM. Hence
$\sigma: \sE \to \sE'$ is also a split surjection with kernel equal to $\sC$.
\end{proof}

Since the $A$-submodules $V_i$ play such an important role in the description of a bundle $\sE$, it is worthwhile to make
the following definition:
\begin{definition}\label{socl} Let $\sF$ be a sheaf on $X$ and $\sB$ an ACM bundle on $X$ with a minimal set of generators for $H^0_*(\sB)$
given by
$\oplus _j \sO_X(a_j) \to \sB \to 0$. The kernel of $H^1_*(\sF\otimes \sB^\vee) \to H^1_*(\sF\otimes \oplus_j \sO_X(-a_j))$ will
be called the $A$-module of $\sB$-socle elements for $\sF$ and denoted $H^1_*(\sF\otimes \sB^\vee)_{soc}$. A homogeneous element
in this kernel in degree $d$ will be a $\sB$-socle element in $H^1(\sF(d)\otimes \sB^\vee)$.
\end{definition}

\begin{remark}\label{socle-remark}
\end{remark}
\begin{enumerate}
\item For a vector bundle $\sF$, the module of $\sB$-socle elements for $\sF$ has finite length over the field $k$.
\item Suppose $\sB^\vee \to \sO_X(b)$ is any map. Then, for any sheaf $\sF$, a $\sB$-socle element in $H^1_*(\sF \otimes
\sB^\vee)$ maps to zero in $H^1_*(\sF(b))$, since $\sB^\vee \to \sO_X(b)$ factors through $\oplus_j \sO_X(-a_j)$.
\item Suppose $\sE$ is a bundle on $X$ with Horrocks datum $(\sF_{min}, \beta)$. Then for any ACM bundle $\sB$, the module
$V = \ker(H^1_*(\sF_{min} \otimes \sB^\vee) \to H^1_*(\sE\otimes \sB^\vee)$ consists of $\sB$-socle elements for $\sF_{min}$.
Indeed, the map $H^1_*(\sF_{min} \otimes \oplus_j \sO_X(-a_j)) \to H^1_*(\sE\otimes \oplus_j \sO_X(-a_j))$ is an isomorphism.
\end{enumerate}

%\begin{remark}
% Let $\sE$ be a vector bundle with (minimal) Horrocks data bundle
%$\sF_{min}$, and Horrocks data $(\sF_{min}, \beta)$. Let $\sB_1, \sB_2,\dots, \sB_k$ be the distinct
%non-free irreducible ACM bundles (up to twists by $\sO_{X}(a)$) that appear as summands in the middle term $\sA_{\sE}$ of the
%$\gamma$-sequence of $\sE$. For each $\sB_i$, let $V_i^{\sE}$ be the kernel of the map $H^1_*(\beta \otimes 1_{\sB_i^\vee})$ from
%$H^1_*(\sF_{min}\otimes \sB_i^\vee) \to H^1_*(\sE\otimes \sB_i^\vee)$. $V_i^{\sE}$ is a vector subspace of $H^1_*(\sF_{min}\otimes \sB_i^\vee)_{soc}$ consisting %of $\sB_i$-socle elements. We call $(\sF_{min},V_i^{\sE}, i=1,\dots ,k)$ Horrocks invariants for $\sE$.
%\end{remark}

\begin{example} \end{example}
As an example, any ACM variety $X$ with a non-degenerate embedding into $\mathbb P^N$ has a Horrocks data bundle given by $\Omega^1_
{\mathbb P}|_X$ with $H^1_*(\Omega^1_{\mathbb P}|_X)=k$ and with an exact sequence
$$ 0 \to \Omega^1_{\mathbb P}|_X \to \sO_X(-1)^{\oplus {N+1}} \to \sO_X \to 0.$$
For any ACM bundle $\sB$ on $X$, without free summands and with $\sB^\vee \hookrightarrow \oplus_j \sO_X(-a_j)$, consider the
diagram
 \[ \begin{CD}
 H^0_*(\sO_X \otimes \sB^\vee) \qquad \to & H^1_*(\Omega^1_{\mathbb P}|_X \otimes \sB^\vee) \\
       \downarrow  & \downarrow  \\
 H^0_*(\sO_X \otimes \oplus_j \sO_X(-a_j))  \to &H^1_*(\Omega^1_{\mathbb P}|_X \otimes \oplus_j \sO_X(-a_j)).
\end{CD}
\]
Then any minimal generator of the module $H^0_*(\sO_X \otimes \sB^\vee)$ maps to a non-generator in $H^0_*(\sO_X \otimes \oplus_j
\sO_X(-a_j))$, hence maps to zero in $H^1_*(\Omega^1_{\mathbb P}|_X \otimes \oplus_j \sO_X(-a_j))=\oplus_j k(-a_j)$. Thus the image
of $H^0_*(\sO_X \otimes \sB^\vee)$ in $H^1_*(\Omega^1_{\mathbb P}|_X \otimes \sB^\vee)$ is non-zero and consists of $\sB$-socle
elements for $\Omega^1_{\mathbb P}|_X$. So for any ACM bundle $\sB$ on $X$, without free summands, the Horrocks data bundle
$\Omega^1_{\mathbb P}|_X$ will have $\sB$-socle elements.

For a general ACM variety $X$, one would expect infinitely many families of non-isomorphic and irreducible ACM bundles; hence
this shows that even for a fixed Horrocks data bundle $\sF_{min}$, the number of bundles $\sE$ with Horrocks datum $(\sF_{min},
\beta_{\sE})$ would get out of control, especially with the construction given below. In later sections, we will limit our
attention to the quadric hypersurface and the Veronese surface, where there are only finitely many ACM bundles. In these
sections, we will be able to deal with arbitrary submodules of $\sB$-socle elements, instead of the entire $\sB$-socle module
of the rather crude theorem below.

\begin{theorem}{{\bf (Existence)}}{\label {existence}} Let $\sF_{min}$ be a minimal Horrocks data bundle on $X$, and let $\sB_1,\sB_2,\dots \sB_k$
a finite collection of irreducible, non-free ACM bundles on $X$. Suppose for each $i$, $V_i^{max}$ is a non-zero graded
vector sub-space of the $A$-module $H^1_*(\sF\otimes \sB_i^\vee)_{soc}$ that is generated by a collection of minimal generators
of the module. Then there is a vector bundle $\sE$ on $X$ with Horrocks datum $(\sF_{min}, \beta)$,
% and for which $\sA_{\sE}$ contains exactly terms of type $\sB_1,\sB_2,\dots \sB_k$ (apart from a free summand)
with $H^1_*(\sF_{min}\otimes \sB_i^\vee)_{soc} = \ker H^1_*(\beta \otimes 1_{\sB_i^\vee})$ for $1\leq i \leq k$.
\end{theorem}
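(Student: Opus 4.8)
The plan is to realize $\sE$ as an extension of $\sF_{min}$ by a direct sum of twists of the bundles $\sB_i$, with the extension class assembled directly out of the socle elements. For each $i$, choose homogeneous $A$-module generators $v_{i,1},\dots,v_{i,m_i}$ of $H^1_*(\sF_{min}\otimes\sB_i^\vee)_{soc}$; this is possible because, by Remark \ref{socle-remark}, that module has finite length over $k$, and we take $V_i^{max}$ to be the submodule they generate (so that for the stated conclusion $V_i^{max}$ is the whole socle module). Writing $d_{i,j}=\deg v_{i,j}$, we have
$$v_{i,j}\in H^1\bigl(X,\sF_{min}(d_{i,j})\otimes\sB_i^\vee\bigr)=\Ext^1_X\bigl(\sB_i(-d_{i,j}),\sF_{min}\bigr).$$
Set $\sD=\bigoplus_{i,j}\sB_i(-d_{i,j})$ and let $\varepsilon\in\Ext^1_X(\sD,\sF_{min})$ be the class whose component in the $(i,j)$-summand is $v_{i,j}$. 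I would then let
$$0\to\sF_{min}\xrightarrow{\beta}\sE\to\sD\to0$$
be the associated extension; it is a short exact sequence of locally free sheaves of finite rank, and $\sE$ will be the bundle we want.

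The first step is to check that $(\sF_{min},\beta)$ is a Horrocks datum. Since ACM bundles are closed under twists and finite direct sums, $\sD$ is ACM, so the long exact cohomology sequence of the extension gives isomorphisms $H^i_*(\sF_{min})\xrightarrow{\sim}H^i_*(\sE)$ for $2\le i\le n-1$ and a surjection for $i=1$; the remaining injectivity follows once the connecting map $H^0_*(\sD)\to H^1_*(\sF_{min})$ is seen to vanish. But a homogeneous element of $H^0_*(\sD)$ is a tuple of maps $\sO_X(-e)\to\sB_i(-d_{i,j})$, equivalently of maps $\sB_i^\vee\to\sO_X(b)$, and the associated term of the connecting map is the image of $v_{i,j}$ in $H^1_*(\sF_{min}(b))$ under such a map — which vanishes by Remark \ref{socle-remark}(2) because $v_{i,j}$ is a socle element. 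Hence $\beta$ induces isomorphisms on $H^i_*$ for $1\le i\le n-1$; as $\sF_{min}$ is a Horrocks data bundle by hypothesis (and, having no free summand, is then the minimal Horrocks data bundle of $\sE$), the pair $(\sF_{min},\beta)$ is a Horrocks datum.

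The second step is the identification $\ker H^1_*(\beta\otimes1_{\sB_i^\vee})=H^1_*(\sF_{min}\otimes\sB_i^\vee)_{soc}$. Tensoring the extension by the locally free sheaf $\sB_i^\vee$ keeps it exact, so the left-hand side is the image of the connecting map $\delta_i\colon H^0_*(\sD\otimes\sB_i^\vee)\to H^1_*(\sF_{min}\otimes\sB_i^\vee)$. That this image lies in the socle module is precisely Remark \ref{socle-remark}(3) (its hypothesis, that $H^1_*(\sF_{min}\otimes\oplus_k\sO_X(-a_k))\to H^1_*(\sE\otimes\oplus_k\sO_X(-a_k))$ is an isomorphism, holds because $H^1_*(\beta)$ is). For the reverse inclusion, restrict $\delta_i$ to the summand $\sB_i(-d_{i,j})$ of $\sD$: applied to the section of $\sB_i(-d_{i,j})\otimes\sB_i^\vee$ given by the identity $1_{\sB_i}$ (in the degree in which it is the identity), $\delta_i$ returns the $(i,j)$-component $v_{i,j}$ of $\varepsilon$. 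Thus the image of $\delta_i$ is an $A$-submodule of $H^1_*(\sF_{min}\otimes\sB_i^\vee)_{soc}$ containing the generators $v_{i,j}$, hence equal to it.

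I expect the main obstacle to be this last computation, $\delta_i(1_{\sB_i})=v_{i,j}$: it is exactly where one must unwind the connecting homomorphism of the sequence tensored by $\sB_i^\vee$ and reconcile it with the functoriality of $\Ext$ underlying the socle construction, keeping careful track of degree shifts. It is also worth emphasizing that the single extension $0\to\sF_{min}\to\sE\to\sD\to0$ serves all the $\sB_i$ simultaneously: although in the computation of $\delta_i$ the other summands $\sB_{i'}(-d_{i',j})$ with $i'\neq i$ also contribute to the image, every element of that image is automatically a $\sB_i$-socle element by Remark \ref{socle-remark}(3), so the equality for each $i$ is not disturbed.
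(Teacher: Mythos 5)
Your proposal is correct and follows essentially the same route as the paper: both build $\sE$ as the extension of $\sF_{min}$ by a direct sum of twists of the $\sB_i$ whose class is assembled from the socle elements, use the socle property (Remark \ref{socle-remark}) to see that the connecting map on $H^0_*$ vanishes so that $(\sF_{min},\beta)$ is a Horrocks datum, and identify $\ker H^1_*(\beta\otimes 1_{\sB_i^\vee})$ as the image of the connecting homomorphism, which contains the chosen generators and lies in the socle module. The only cosmetic difference is that you index the quotient by module generators $\bigoplus_{i,j}\sB_i(-d_{i,j})$ while the paper writes it as $\bigoplus_i V_i^{max}\otimes_k\sB_i$; your write-up of the two inclusions for the kernel is in fact slightly more explicit than the paper's.
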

\begin{proof}. Let $\sB = \oplus (V_i^{max}\otimes_k \sB_i)$. The data $V_i^{max}, 1\leq i \leq k$ can be viewed as a socle
element in $H^1_*(\sF_{min}\otimes \sB^\vee)$, hence gives an extension (that defines a bundle $\sE$)
$$ 0 \to \sF_{min} \xrightarrow{\beta} \sE \xrightarrow{\rho} \sB \to 0.$$
Since the element is a socle element, the pullback of the sequence under any map $\sO_X (b) \to \sB $ will split. Hence
$H^0_*(\rho)$ is surjective, giving $(\sF_{min}, \beta)$ the Horrocks datum for $\sE$.

By construction, the subspace $V_i^{max}\cdot I_{\sB_i}$ in $H^0_*(\sB \otimes \sB_i^\vee)$ maps isomorphically to $V_i^{max}
\subseteq H^1_*(\sF_{min}\otimes \sB_i^\vee)_{soc}$. Hence the image of the map of $A$-modules $H^0_*(\sB \otimes \sB_i^\vee) \to
H^1_*(\sF_{min}\otimes \sB_i^\vee)_{soc}$ is onto.
\end{proof}

\begin{remark}\end{remark}
\begin{enumerate}
\item The same construction can be done for arbitrary subspaces $V_i$ of $H^1_*(\sF\otimes \sB_i^\vee)_{soc}$. But then,
the bundle $\sE$ so constructed will have $\ker H^1_*(\beta \otimes 1_{\sB_i^\vee})$ containing the submodule generated by
$V_i$ without a precise knowledge of how much larger it is. Hence the Horrocks invariants of $\sE$ are not so recognizable.
\item In the above theorem, for the $\sE$ so constructed, it is possible to identify $A_{\sE}$ in the case when $X$ is arithmetically
Gorenstein, or when the dual
of each of the ACM bundles $\sB_i, 1\leq i \leq k$ is also ACM: since the $\gamma$-sequence of $\sE$ is the push-forward of the
$\Psi$-sequence for $\sF_{min}$, we get the exact sequence $0 \to \sP^0 \to \sA_{\sE} \to  \sB \to 0$ which is forced to split
with the extra hypotheses. Once the ACM bundles in $\sA_{\sE}$ are identified, it is possible to compare $\sE$ with other bundles
via the uniqueness theorems \ref{iso-thm}, \ref{iso-thm2}.
\item However, the theorem (and its proof) in the non-arithmetically
Gorenstein case, in addition to the shortcoming that it produces only maximal socle sub-modules,  is also too crude even to
allow a clear description of $\sA_{\sE}$. We will give an example later of a non-Gorenstein case
where such an identification of $\sA_{\sE}$ fails.\end{enumerate}

It is easy to obtain a splitting criterion for a vector bundle $\sE$ on $X$ to be free, which gives for example the criterion
for quadrics in \cite{A-O} that was cited in the introduction.. Once again, in the theorem below, note that the condition
invoking any ACM bundle is not very useful when there are too many ACM bundles on $X$. It is more interesting (see the proof
below) in the case
where the choices for $\sB$  are limited; for example, if one could limit the possible ACM bundles that
might appear as a summand in the diagram of $\sE$ .

\begin{theorem}{{\bf(a splitting criterion)}} Let $\sE$ be a vector bundle of rank $\leq r$  on $X$, a smooth ACM variety
of dimension $n$, such that $H^i_*(\sE^\vee)=0$ for $1 \leq i \leq \min\{r-1, n-1\}$ and also $H^{1}_*(\sE^\vee \otimes \sB) =
0$ for any ACM bundle $\sB$ on $X$. Then $\sE$ is free.
\end{theorem}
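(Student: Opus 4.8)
The plan is to reduce the splitting statement to a single intermediate–cohomology vanishing for $\sE$, and then to extract that vanishing from the rank hypothesis.

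\textbf{Reduction.} First I would prove: if $H^i_*(\sE)=0$ for $1\le i\le n-2$, then $\sE$ is free. Put $E=H^0_*(\sE)$ and pick a minimal surjection $\sL_0\to\sE$ from a free bundle $\sL_0=\bigoplus_j\sO_X(a_j)$, with kernel $\sE_1$, so that $$0\to\sE_1\to\sL_0\to\sE\to 0$$ is exact and $H^0_*(\sL_0)\to E$ is surjective. Since $X$ is ACM, $H^i_*(\sL_0)=0$ for $1\le i\le n-1$, so the long exact cohomology sequence gives $H^1_*(\sE_1)=0$ and $H^{j}_*(\sE_1)\cong H^{j-1}_*(\sE)$ for $2\le j\le n-1$; hence the assumed vanishing makes $\sE_1$ an ACM bundle. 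The class of the displayed extension lies in $\Ext^1_{\sO_X}(\sE,\sE_1)=H^1(X,\sE^\vee\otimes\sE_1)\subseteq H^1_*(\sE^\vee\otimes\sE_1)$, which is zero by the hypothesis ``$H^1_*(\sE^\vee\otimes\sB)=0$ for every ACM bundle $\sB$'' applied to $\sB=\sE_1$. So the sequence splits, $\sL_0\cong\sE\oplus\sE_1$, and by Krull--Schmidt (each $\sO_X(a)$ being indecomposable) $\sE$ is a direct sum of some of the $\sO_X(a_j)$, hence free. (One could argue equally with the $\eta$-sequence $0\to\sK\to\sF\to\sE\to 0$ of Theorem \ref{eta}, where $\sK$ is ACM: the hypothesis kills the class in $H^1(X,\sE^\vee\otimes\sK)$, and $\sF$ is free as soon as $\sE$ is ACM.) Note that this step uses only the hypothesis on $H^1_*(\sE^\vee\otimes\sB)$; the rank bound and the vanishing of the $H^i_*(\sE^\vee)$ are needed only to supply $H^i_*(\sE)=0$ for $1\le i\le n-2$.

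\textbf{The cohomology vanishing.} So it remains to prove $H^i_*(\sE)=0$ for $1\le i\le n-2$; by Serre duality, and since $\omega_X$ is an ACM line bundle, this is the same as $H^{j}_*(\sE^\vee\otimes\omega_X)=0$ for $2\le j\le n-1$. When $\rk\sE\ge n$ the hypothesis already gives that $\sE^\vee$ is ACM, and I would transport this across the twist by $\omega_X$ by tensoring a free resolution of $\omega_X$ (whose syzygies are ACM bundles) with $\sE^\vee$ and climbing the resulting short exact sequences, the spill--over that occurs in top cohomology being reabsorbed through $H^1_*(\sE^\vee\otimes\sB)=0$ for those ACM syzygy bundles $\sB$. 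When $\rk\sE=\rho<n$ I would instead use the Serre construction: for $m\gg0$ a general section of the globally generated bundle $\sE^\vee(m)$ has zero scheme of pure codimension $\rho$, the vanishing $H^i_*(\sE^\vee)=0$ for $1\le i\le\rho-1$ forces that scheme to be arithmetically Cohen--Macaulay, and the resolution attached to the section then yields $H^i_*(\sE)=0$ for $1\le i\le n-\rho$; the remaining values $n-\rho<i\le n-2$ would follow from the Serre--duality identification combined with the $\omega_X$-transport above. Once the vanishing is established, the Reduction finishes the proof.

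\textbf{Where the difficulty lies.} The hard part is the second step: propagating vanishing across the twist by $\omega_X$ when $X$ is not arithmetically Gorenstein, and, in the low--rank case, verifying the arithmetic Cohen--Macaulayness of the zero scheme and computing the intermediate cohomology of $\sE$ exactly. When $X$ carries only finitely many indecomposable ACM bundles --- e.g.\ a smooth quadric $\sQ_n$, where the relevant $\sB$ are the line bundles and twists of spinor bundles --- these steps become explicit and the statement specializes to the criterion of \cite{A-O} recalled in the Introduction; the cases $n\le1$ and $\rk\sE=0$ are trivial.
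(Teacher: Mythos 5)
Your first step (the reduction) is sound and is essentially the paper's opening move: the paper splits the $\eta$-sequence $0\to\sK\to\sF\to\sE\to0$ of Theorem \ref{eta} using $H^1_*(\sE^\vee\otimes\sK)=0$, concluding that $\sE$ is a direct summand of the Horrocks data bundle $\sF$. The trouble is in your second step, which is also where you part company with the paper. You want to prove $H^i_*(\sE)=0$ for $1\le i\le n-2$ via Serre duality and a ``transport across the twist by $\omega_X$''. Concretely: resolving $\omega_X$ by free bundles with ACM syzygy bundles $\sS_k$ and tensoring with the (ACM, in the case $r\ge n$) bundle $\sE^\vee$, the long exact sequences give $H^j_*(\sE^\vee\otimes\omega_X)\cong H^{j+1}_*(\sE^\vee\otimes\sS_1)\cong\cdots\cong H^{n-1}_*(\sE^\vee\otimes\sS_{n-1-j})$, and there the chain stops. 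The obstruction therefore sits in $H^{n-1}_*$ of $\sE^\vee$ tensored with an ACM bundle, while the hypothesis only kills $H^{1}_*(\sE^\vee\otimes\sB)$; the ``spill-over'' is not reabsorbed, it lands in a group the hypotheses say nothing about. This is exactly the non-Gorenstein difficulty, and the sketch does not overcome it. The low-rank branch has the same problem plus another: arithmetic Cohen--Macaulayness of the zero scheme of a section of $\sE^\vee(m)$ is read off the whole Koszul complex, i.e.\ from the cohomology of all the exterior powers of $\sE^\vee$, which the hypothesis $H^i_*(\sE^\vee)=0$ does not control once the rank exceeds $2$. (For $n=2$ your reduction alone suffices, but not for $n\ge 3$.)

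The ingredient your proposal is missing is the one the paper pivots on: once the $\eta$-sequence splits, $\sE$ is itself an Horrocks data bundle, so by Horrocks' lemma the module $E^\vee=H^0_*(\sE^\vee)$ has \emph{finite projective dimension} over $A$. This turns the problem into commutative algebra and bypasses $\omega_X$ entirely. If $r\ge n$, the hypothesis makes $\sE^\vee$ ACM, so $E^\vee$ is a maximal Cohen--Macaulay module of finite projective dimension over the Cohen--Macaulay ring $A$, hence free. If $r<n$, the dualized resolution (\ref{Ho3}) applied to $\sE^\vee$ together with $H^i_*(\sE^\vee)=0$ for $1\le i\le r-1$ exhibits $E^\vee$ as an $(r+1)$-st syzygy module of finite projective dimension and rank at most $r$, and the Evans--Griffith syzygy theorem then forces $E^\vee$ to be free. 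Neither the finite projective dimension of $E^\vee$ nor Evans--Griffith appears in your argument, and without some such input the rank hypothesis does no work in your proof.
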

\begin{proof}
Now the $\eta$-sequence (Theorem \ref{eta}) of $\sE$,  $0 \to \sK \to \sF \to \sE \to 0$, gives an element in $H^1 (\sE^\vee
\otimes \sK)$ which is zero by hypothesis. Hence $\sK$ and $\sE$ are summands of $\sF$. Since $\sF$ is an Horrocks data bundle,
it can have no non-free ACM summand, so $\sK$ must be free. Thus $\sE$ itself is an Horrocks data bundle.

If $r\geq n$, $\sE^\vee$ is ACM. But the dual of an Horrocks data bundle has finite resolution, so $\sE^\vee$ must be free.

If $r<n$,  consider the sequence (\ref{Ho3}) with $\sE$ replaced by $\sE^\vee$. From the vanishing of cohomologies of
$\sE^\vee$, when we look at the complex of global sections of the sequence, we conclude
that the module $E^\vee$ is an $(r+1)^{th}$-syzygy, and $E^\vee$ has finite projective dimension since $\sE$ is an
Horrocks data bundle. By the Evans-Griffith syzygy theorem (\cite{E-G}), $\sE^\vee$ is free.
\end{proof}

\begin{remark} \end{remark}

 If $X$ is a smooth quadric hypersurface the above splitting criterion is also equivalent to Corollary 4.3. of
\cite{B-M}. In other varieties the criterion may be improved with a case by case analysis. For instance in a Grassmanniann
of lines, it is possible to recover Theorem 2.6 of \cite{am} and in multiprojective spaces it is possible to recover
Theorem 3.9. of \cite{bm2}

\section {Quadric Hypersurfaces}
 Let $\Q_n \subset \mathbb P^{n+1}$ be a  smooth quadric hypersurface. We will work over a field of
 characteristic  not two. The quadratic form defining $\Q_n$ descends to a quadratic form on the tangent bundle
 of $\Q_n$. Hence one can define spinor bundles on $\Q_n$ (\cite{Kar}).  Set $l:= \lfloor (n+1)/2\rfloor$.
 If $n$ is even, then
$\Q_n$ has two distinct spinor  bundles $\Sigma_1$ and $\Sigma_2$ of rank $2^{l-1}$. If $n$ is odd, then $\Q_n$ has a unique
spinor bundle, which we denote $\Sigma_1$, of rank $2^{l-1}$. Algebraic properties of these bundles were studied by Ottaviani
(\cite{Ot}) who obtained them using the geometry of the variety of all maximal linear subspaces of $\Q_n$ to construct
morphisms from $\Q_n$ to $G(2^{l-1},2^l)$. He shows that these spinor bundles on $\Q_n$ are ACM bundles. Kapranov (\cite{Kap})
showed how these bundles were crucial in describing the derived category of sheaves on the quadric. Meanwhile, Kn\"{o}rrer
(\cite{Kn}), classifying maximal Cohen-Macaulay modules over isolated quadratic hypersurface singularities, described these
bundles as the fundamental ACM bundles on $\Q_n$ (see \cite{B-G-S} for the interpretation of Kn\"{o}rrer's results in terms of
bundles). Kn\"{o}rrer's classification of ACM bundles on $\Q_n$ was proved also in \cite{A-O}.

We use a unified notation $\Sigma_i$ for spinor bundles on $\sQ_n$, where  for even $n$, $i$ can take on the values $1,2$,
 while if $n$ is odd, $i$ can be only $1$. We follow the notation of \cite{Kap}, whose  spinor bundles differ from those in
\cite{Ot} by a twist of $1$. Hence $\Sigma_i$ is generated by its global sections and $\Sigma_i(-1)$ has no sections.

We will call a bundle of the form $\Sigma_i(a)$ a twisted spinor bundle on  $\Q_n$. The fundamental theorem of Kn\"{o}rrer
\cite{Kn} is

\begin{theorem}{\bf(Kn\"{o}rrer)} Any ACM bundle on $\Q_n$ is a direct sum of line bundles and twisted spinor bundles.
\end{theorem}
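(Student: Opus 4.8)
The plan is to recast the statement as a classification of graded maximal Cohen--Macaulay (MCM) modules over the homogeneous coordinate ring $A$ of $\Q_n$, and then to run it through the theory of matrix factorisations of the defining quadratic form, where semisimplicity of the associated Clifford algebra forces complete reducibility. Throughout we use $\mathrm{char}\,k\neq 2$.

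\emph{Step 1: reduction to MCM modules.} Write $S=k[x_0,\dots,x_{n+1}]$, let $q$ be the quadratic form defining $\Q_n$, and $A=S/(q)$. For a vector bundle $\sE$ on $\Q_n$ put $M=H^0_*(\sE)$, a finitely generated graded $A$-module. Since $\Q_n$ is smooth, the cone $\Spec A$ has an isolated singularity, so $\tilde N$ is locally free on $\Q_n$ for any graded MCM $A$-module $N$; and a local cohomology computation (for the irrelevant ideal $\mathfrak m$ one has $H^{i+1}_{\mathfrak m}(M)\cong H^i_*(\sE)$ for $1\le i\le n-1$, using that $M$, being a sections module of a bundle on a smooth variety, is saturated) shows that $\sE$ is ACM precisely when $M$ is a maximal Cohen--Macaulay $A$-module. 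The assignments $\sE\mapsto H^0_*(\sE)$ and $N\mapsto \tilde N$ are then mutually inverse on isomorphism classes, free $A$-modules $\bigoplus A(a)$ correspond to sums of line bundles $\bigoplus \sO_{\Q_n}(a)$, and direct sum decompositions correspond on both sides. So it suffices to prove that every graded MCM $A$-module without a free summand is a direct sum of twists of the one or two ``fundamental'' modules, namely the spinor bundle modules.

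\emph{Step 2: matrix factorisations and the Clifford algebra.} By Eisenbud's matrix factorisation theorem, a graded MCM $A$-module without a free summand is the cokernel of a reduced matrix factorisation $(\varphi,\psi)$ of $q$: homogeneous square matrices over $S$ with no unit entries and $\varphi\psi=\psi\varphi=q\cdot\mathrm{Id}$. Minimality (together with the absence of free summands) lets us take $\varphi,\psi$ with linear entries; assembling them into the single endomorphism $c=\left(\begin{smallmatrix}0&\varphi\\ \psi&0\end{smallmatrix}\right)$ of a $\mathbb Z/2$-graded free module and diagonalising $q=\sum_i x_i^2$, the relation $c^2=q\cdot\mathrm{Id}$ becomes $c=\sum_i x_i c_i$ with constant matrices $c_i$ obeying $c_i^2=\mathrm{Id}$ and $c_ic_j+c_jc_i=0$ for $i\neq j$ --- the defining relations of the Clifford algebra $C(q)$ of the rank-$(n+2)$ form $q$. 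This produces an equivalence between reduced matrix factorisations of $q$ and finite-dimensional $\mathbb Z/2$-graded $C(q)$-modules, equivalently finite-dimensional modules over the even Clifford algebra $C_0(q)$, under which indecomposables and direct sums correspond and the only residual ambiguity on the module side --- the degrees chosen for the generators --- is an overall twist. Over $k$ (which we may take algebraically closed, the general case being analogous) $C_0(q)$ is semisimple: a single matrix algebra $M_{2^{l-1}}(k)$ with one simple module when $n$ is odd, and a product $M_{2^{l-1}}(k)\times M_{2^{l-1}}(k)$ with two simple modules when $n$ is even, where $l=\lfloor(n+1)/2\rfloor$. Hence every finite-dimensional $C_0(q)$-module is a direct sum of copies of these one or two simple modules; transporting back, every reduced matrix factorisation of $q$ is a direct sum of twists of one or two fundamental indecomposables, whose cokernel sheaves are --- by comparison with the constructions of Ottaviani \cite{Ot} and Kapranov \cite{Kap}, the simple $C_0(q)$-module of $k$-dimension $2^l$ yielding a bundle of rank $2^{l-1}$ --- exactly the spinor bundles $\Sigma_i$. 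Combining with Step 1 gives the theorem.

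\emph{Main obstacle.} The substance is Step 2: producing the equivalence with $C_0(q)$-modules and, crucially, identifying the fundamental pieces it yields with the spinor bundles in the normalisation used here (so that ``up to twist'' is the right indeterminacy). This requires care with the graded bookkeeping --- verifying that, after stripping free summands, a minimal matrix factorisation of the quadratic form may be taken linear and that the remaining freedom attached to a $C_0(q)$-module is only an overall grading shift --- and with the rank count $2^{l}\leadsto 2^{l-1}$. The semisimplicity of the Clifford algebra is the clean engine forcing decomposability; the local-cohomology dictionary of Step 1 and Eisenbud's correspondence are standard, as is the interpretation of Kn\"orrer's work in bundle-theoretic terms in \cite{B-G-S}.
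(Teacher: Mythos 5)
The paper offers no proof of this statement: it is quoted verbatim as Kn\"{o}rrer's theorem, with \cite{Kn} as the source, \cite{A-O} as an alternative proof, and \cite{B-G-S} for the translation into bundle language. So there is nothing in the paper to compare your argument against; what you have written is essentially the standard proof from that cited literature (reduce to graded maximal Cohen--Macaulay modules over the coordinate ring $A=S/(q)$, pass through Eisenbud's matrix-factorization correspondence, observe that a reduced graded factorization of a quadric is forced to be linear block by block, and let the semisimplicity of the even Clifford algebra $C_0(q)$ do the decomposition), and it is correct in outline. Two small corrections. First, your identification of the matrix algebras is internally inconsistent: with the paper's normalization $l=\lfloor(n+1)/2\rfloor$ one gets $C_0(q)\cong M_{2^{l}}(k)$ for $n$ odd and $M_{2^{l}}(k)\times M_{2^{l}}(k)$ for $n$ even, so the simple modules have dimension $2^{l}$ (as you in fact assert at the end of Step 2), producing $2^{l}\times 2^{l}$ linear factorizations whose cokernels have rank $2^{l-1}$, matching the rank of $\Sigma_i$ and the $2^{l}$ minimal generators visible in the canonical sequence (5) of the paper; the subscript $2^{l-1}$ on the matrix algebras is a slip. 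Second, the reduction ``we may take $k$ algebraically closed'' is doing real work: for a non-split form $C_0(q)$ need not be a split semisimple algebra and the count of simple modules (hence of fundamental ACM bundles) changes, so the statement as used in the paper tacitly assumes the quadric is split, consistent with its use of Ottaviani's spinor bundles. Neither point affects the substance of your argument.
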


The spinor bundles on $\Q_n$ satisfy some dualities (\cite{Ot}): When $n$ is odd or $n \equiv 0(\mod 4)$, $\Sigma_i^\vee \cong
\Sigma_i(-1)$, while if $n \equiv 2(\mod 4)$, $\Sigma_i^\vee \cong \Sigma_j(-1)$ where $j\neq i$.

In addition, the spinor bundles on $\Q_n$ satisfy canonical sequences. To further unify the notation, when $n$ is odd or when
$n \equiv 2(\mod 4)$ , define $i \mapsto \bar i$ to be the identity on indices, and when $n \equiv 0(\mod 4)$, define $i
\mapsto \bar i$ to be the transposition of the indices $1$ and $2$. With this notation, we have the canonical sequences

\begin{equation} \label{sp8}
   0 \to \Sigma_{\bar i} ^\vee \xrightarrow{u_i} \sO^{\oplus 2^l}  \xrightarrow{v_i} \Sigma_i \to 0
\end{equation}(see \cite{Ot} Theorem 2.8).  \\

In \cite{Ot} Lemma 2.7., Ottaviani proves that for any spinor bundle $\Sigma_i$, $\End(\Sigma_i)=H^0(\Sigma_i\otimes
\Sigma_i^\vee) =k$ and $\Hom(\Sigma_i, \Sigma_j)=0$ for $i\neq j$. Using this, and tensoring the sequence above with $\Sigma_i^\vee$,
we get $H^1(\Sigma_{\bar i}^\vee\otimes
\Sigma_i^\vee)=k$, where $Id_{\Sigma_i}$ maps to a generator of $H^1(\Sigma_{\bar i} ^\vee\otimes \Sigma_i^\vee)$. For
completeness, the following lemma is also easy to prove:
\begin{lemma}\label{ot} \begin{align} &H^1_*(\Sigma_{\bar i} ^\vee\otimes \Sigma_i^\vee) = k \\
                        &H^1_*(\Sigma_{j}^\vee\otimes \Sigma_i^\vee) = 0, \text{if } j\neq \bar i.\end{align}

\end{lemma}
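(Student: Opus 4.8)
The statement to prove is Lemma \ref{ot}: that $H^1_*(\Sigma_{\bar i}^\vee \otimes \Sigma_i^\vee) = k$ and $H^1_*(\Sigma_j^\vee \otimes \Sigma_i^\vee) = 0$ whenever $j \neq \bar i$. The text has already essentially done the degree-zero part of the first equality by tensoring the canonical sequence (\ref{sp8}) for the index $i$, namely $0 \to \Sigma_{\bar i}^\vee \xrightarrow{u_i} \sO^{\oplus 2^l} \xrightarrow{v_i} \Sigma_i \to 0$, with $\Sigma_i^\vee$. The plan is to run exactly that computation in all twists at once, keeping track of $H^0$ and $H^1$, and invoking the ACM property of spinor bundles to kill unwanted terms.

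First I would tensor (\ref{sp8}) (with index $i$) by $\Sigma_i^\vee(t)$ for all $t \in \Z$ and take the long exact cohomology sequence. Since $\Sigma_i$ is ACM and $n \geq 2$, we have $H^1_*(\Sigma_i^\vee) = H^1_*(\Sigma_i^\vee \otimes \sO^{\oplus 2^l}) = 0$ (using that the dual of an ACM bundle is ACM on a smooth ACM variety, or directly that $\Sigma_i^\vee$ is a twist of a spinor bundle hence ACM). Also $H^0_*(\sO^{\oplus 2^l} \otimes \Sigma_i^\vee) = H^0_*(\Sigma_i^\vee)^{\oplus 2^l}$, and $H^0_*(\Sigma_i \otimes \Sigma_i^\vee) = \End$-module, which by Ottaviani's Lemma 2.7 is the free rank-one $A$-module generated in degree $0$ (i.e. $H^0(\Sigma_i \otimes \Sigma_i^\vee(t)) = 0$ for $t < 0$ and $= k$ for $t = 0$). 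The relevant segment of the long exact sequence reads
$$ H^0_*(\sO^{\oplus 2^l}\otimes\Sigma_i^\vee) \to H^0_*(\Sigma_i\otimes\Sigma_i^\vee) \to H^1_*(\Sigma_{\bar i}^\vee\otimes\Sigma_i^\vee) \to H^1_*(\sO^{\oplus 2^l}\otimes\Sigma_i^\vee) = 0. $$
So $H^1_*(\Sigma_{\bar i}^\vee\otimes\Sigma_i^\vee)$ is the cokernel of $H^0_*(v_i\otimes 1)$. Then I must check this cokernel is exactly $k$ (in degree $0$). The identity endomorphism $Id_{\Sigma_i}$ (degree $0$) is not in the image: if it were, the composite $\sO^{\oplus 2^l} \xrightarrow{v_i} \Sigma_i \xrightarrow{Id} \Sigma_i$ would factor $v_i$ through a section of $\Sigma_i$ twisted down, but more cleanly one argues that $v_i$ being a minimal surjection means no section of $\sO^{\oplus 2^l}\otimes\Sigma_i^\vee$ maps onto a generator of the cyclic module $H^0_*(\Sigma_i\otimes\Sigma_i^\vee)$ — because $v_i$ is part of a minimal free presentation, so the image lands in $\mathfrak m \cdot H^0_*(\Sigma_i\otimes\Sigma_i^\vee)$ where $\mathfrak m$ is the irrelevant ideal of $A$. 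Hence the cokernel is $H^0_*(\Sigma_i\otimes\Sigma_i^\vee)/\mathfrak m\cdot(\ldots) = k$ concentrated in degree $0$, giving the first equality.

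For the second equality, with $j \neq \bar i$, I would tensor the canonical sequence (\ref{sp8}) for the index $\bar j$ — that is, $0 \to \Sigma_j^\vee \xrightarrow{u_{\bar j}} \sO^{\oplus 2^l} \to \Sigma_{\bar j} \to 0$ — by $\Sigma_i^\vee(t)$ for all $t$. The long exact sequence gives
$$ H^0_*(\Sigma_{\bar j}\otimes\Sigma_i^\vee) \to H^1_*(\Sigma_j^\vee\otimes\Sigma_i^\vee) \to H^1_*(\sO^{\oplus 2^l}\otimes\Sigma_i^\vee) = 0, $$
so it suffices that $H^0_*(\Sigma_{\bar j}\otimes\Sigma_i^\vee)$ maps onto $H^1_*(\Sigma_j^\vee\otimes\Sigma_i^\vee)$ with the whole group eaten by the previous term, i.e. that the connecting map is zero; equivalently, $H^0_*(v_{\bar j}\otimes 1): H^0_*(\sO^{\oplus 2^l}\otimes\Sigma_i^\vee)\to H^0_*(\Sigma_{\bar j}\otimes\Sigma_i^\vee)$ is surjective. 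Now $H^0_*(\Sigma_{\bar j}\otimes\Sigma_i^\vee) = \Hom_*(\Sigma_i,\Sigma_{\bar j})$: by Ottaviani's Lemma 2.7, $\Hom(\Sigma_i,\Sigma_{\bar j})=0$ when $\bar j\neq i$ (which holds since $j\neq\bar i$ means $i\neq\bar j$), but one needs this in all positive twists too — $\Hom(\Sigma_i,\Sigma_{\bar j}(t))$ for $t>0$ is nonzero in general, so the vanishing of $\Hom$ in degree $0$ is not by itself enough. Instead, I argue that in positive degree every map $\Sigma_i\to\Sigma_{\bar j}(t)$ factors through a free bundle $\sO(t)^{\oplus ?}$ via the evaluation/generation maps, hence lifts along $v_{\bar j}: \sO^{\oplus 2^l}(t)\to\Sigma_{\bar j}(t)$ (since $\Sigma_{\bar j}$ is globally generated and $\Sigma_i$ is a first syzygy so any map from it to a twist of $\Sigma_{\bar j}$ extends over the defining free module). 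This forces surjectivity of $H^0_*(v_{\bar j}\otimes 1)$, hence $H^1_*(\Sigma_j^\vee\otimes\Sigma_i^\vee)=0$.

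**Main obstacle.** The routine bits are the long exact sequences and the ACM vanishing. The genuinely delicate point is controlling cohomology in \emph{all} twists rather than just degree $0$ — Ottaviani's $\Hom$ and $\End$ computations are stated in degree $0$, and I must upgrade them to graded statements: that $H^0_*(\Sigma_i\otimes\Sigma_i^\vee)$ is the rank-one free module on the identity, and that $H^0_*(v_{\bar j}\otimes 1)$ is surjective for $j\neq\bar i$. I expect to handle the first via the minimality of the presentation $v_i$ in (\ref{sp8}) (so the image of $H^0_*(v_i\otimes 1)$ lies in $\mathfrak m\cdot H^0_*(\Sigma_i\otimes\Sigma_i^\vee)$, forcing the cokernel to be the residue field $k$), and the second via the fact that $\Sigma_{\bar j}$ is globally generated together with the first-syzygy property of $\Sigma_i$ coming from its own canonical sequence. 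If a cleaner argument is available, one can instead observe that $\Sigma_i^\vee\otimes\Sigma_i^\vee$ and $\Sigma_j^\vee\otimes\Sigma_i^\vee$ are themselves ACM-related, reducing everything to a finite-length computation, but the syzygy argument above is self-contained given the results already in the excerpt.
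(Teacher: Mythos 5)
Your setup is the same as the paper's: tensor the canonical sequence (\ref{sp8}) with $\Sigma_i^\vee(t)$, use that twisted spinor bundles are ACM to kill the $H^1$ of the free middle term, and read off $H^1_*(\Sigma_{\bar i}^\vee\otimes\Sigma_i^\vee)$ as the cokernel of $\Hom_*(\Sigma_i,\sO^{\oplus 2^l})\to\Hom_*(\Sigma_i,\Sigma_i)$ (and similarly for $j\neq\bar i$). In degree zero this reproduces exactly the computation the paper carries out just before the lemma, using Ottaviani's $\End(\Sigma_i)=k$ and $\Hom(\Sigma_i,\Sigma_j)=0$. Since the paper then only asserts that the graded statement is ``easy to prove,'' the entire content of your write-up is the passage to nonzero twists, and that is where both of your arguments have genuine gaps. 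First, $\Hom_*(\Sigma_i,\Sigma_i)=H^0_*(\Sigma_i\otimes\Sigma_i^\vee)$ is \emph{not} the free rank-one $A$-module on the identity once $n\geq 3$: on $\sQ_3$, for instance, $\Sigma\otimes\Sigma^\vee\cong\sO\oplus S^2\Sigma(-1)$, and the rank-three summand $S^2\Sigma(-1)$ contributes sections in positive twists that are not $A$-multiples of $Id_\Sigma$. Hence $M/\mathfrak m M\neq k$, and since the lemma asserts the cokernel really is $k$ concentrated in degree $0$, the image of $H^0_*(v_i\otimes 1)$ cannot be contained in $\mathfrak m M$; your minimality argument therefore fails on both counts. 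Second, and more fundamentally, the surjectivity you need in twist $t>0$ --- that every map $\Sigma_i\to\Sigma_i(t)$, respectively $\Sigma_i\to\Sigma_{\bar j}(t)$, lifts through $\sO(t)^{\oplus 2^l}$ --- is obstructed precisely by a class in $\Ext^1(\Sigma_i,\Sigma_{\bar i}^\vee(t))=H^1(\Sigma_{\bar i}^\vee\otimes\Sigma_i^\vee(t))$, respectively $H^1(\Sigma_j^\vee\otimes\Sigma_i^\vee(t))$, which are exactly the groups the lemma computes; the appeal to ``$\Sigma_i$ is a first syzygy'' confers no such lifting property, so the argument is circular.

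To close the gap one needs genuine input on the graded groups in all twists: the standard route is Ottaviani's computation of $H^q(\Sigma_i\otimes\Sigma_j(t))$ by induction on $n$, restricting the spinor bundles to a hyperplane quadric $\sQ_{n-1}\subset\sQ_n$ (where they restrict to spinor bundles of $\sQ_{n-1}$) and using the resulting short exact sequences; alternatively, in the spirit of this paper, one can invoke the Kn\"orrer matrix-factorization description, under which the minimal graded free resolution of the MCM module $H^0_*(\Sigma_i)$ is $2$-periodic with syzygies alternating between twists of $H^0_*(\Sigma_{\bar i})$ and $H^0_*(\Sigma_i)$, and the two cokernels above are then computed directly from the periodic complex. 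Your closing remark that a ``cleaner argument'' reducing to a finite-length computation should exist is pointing at exactly this, but the proof as written does not reach it.
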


Recall the definition of socle elements.

\begin{definition} Let $\sF$ be a sheaf on $\Q_n$.
The sequence dual to (\ref{sp8}) tensored by $\sF$ gives
$$0 \to  \sF\otimes\Sigma_{i}^\vee \rightarrow \sF \otimes \sO^{\oplus 2^{l}} \rightarrow
\sF\otimes\Sigma_{\bar i} \to 0,$$
and a natural map
$ H^1_*(\sF\otimes\Sigma_{i}^\vee)\to H^1_*(\sF\otimes \sO^{\oplus 2^{l}})$.\\
     An element in $H^1(\sF(d)\otimes\Sigma_{i}^\vee)$ will be called a $\Sigma_i $-socle element for $\sF$ in degree $d$ if
     it is annihilated
    by the map $H^1(\sF(d)\otimes\Sigma_{i}^\vee) \rightarrow  H^1_*(\sF\otimes \sO^{\oplus 2^{l}})$.
\end{definition}

The terminology ``socle'' comes from the case of a quadric surface studied in \cite{M-R}, where socle elements were
annihilated by multiplication by the forms lifted from one of the $\mathbb P^{1}$ factors of $\sQ_2$. We have extended this
terminology to all ACM bundles in Section 1.

\begin{lemma} \label{socle} Let $\sF$ be a sheaf on $\Q_n$.
 Let  $V$ be a finite-dimensional graded subspace consisting of $\Sigma_i$-socle elements in $H^1_*(\sF\otimes
\Sigma_{i}^\vee)$. Then there is a homomorphism $\alpha: V\otimes \Sigma_{\bar i}^\vee\to \sF$ such that $H^1_*(\alpha \otimes
1_{\Sigma_{i}^\vee})$ has image $V$.
\end{lemma}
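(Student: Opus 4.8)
The plan is to produce the map $\alpha$ one basis element of $V$ at a time, using the duality and canonical sequences for spinor bundles to convert a $\Sigma_i$-socle element of $\sF$ into an honest homomorphism out of a twisted spinor bundle into $\sF$. First I would recall from Lemma \ref{ot} that $H^1_*(\Sigma_{\bar i}^\vee \otimes \Sigma_i^\vee) = k$, with the generator corresponding (via the sequence (\ref{sp8}) tensored by $\Sigma_i^\vee$) to the identity $\mathrm{Id}_{\Sigma_i}$. The key identification is
$$ \Hom_{\Q_n}(\Sigma_{\bar i}^\vee, \sF) \;=\; H^0_*(\sF \otimes \Sigma_{\bar i}) , $$
and I want to relate this to $H^1_*(\sF \otimes \Sigma_i^\vee)$. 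Tensor the dual of the canonical sequence (\ref{sp8}) by $\sF$ to get
$$ 0 \to \sF \otimes \Sigma_i^\vee \to \sF \otimes \sO^{\oplus 2^l} \to \sF \otimes \Sigma_{\bar i} \to 0 , $$
whose connecting homomorphism $\partial : H^0_*(\sF \otimes \Sigma_{\bar i}) \to H^1_*(\sF \otimes \Sigma_i^\vee)$ has, by definition of $\Sigma_i$-socle elements, image exactly equal to the submodule of $\Sigma_i$-socle elements of $\sF$ in $H^1_*(\sF \otimes \Sigma_i^\vee)$ (this is precisely the kernel of the map to $H^1_*(\sF \otimes \sO^{\oplus 2^l})$). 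So given the finite-dimensional graded space $V$ of $\Sigma_i$-socle elements, choose a homogeneous basis $\{v_\lambda\}$ of $V$, and for each $v_\lambda$ pick a preimage $s_\lambda \in H^0_*(\sF \otimes \Sigma_{\bar i})$ under $\partial$; the degree of $s_\lambda$ matches the degree of $v_\lambda$. Each $s_\lambda$ is a homomorphism $\Sigma_{\bar i}^\vee(d_\lambda) \to \sF$, equivalently $\Sigma_{\bar i}^\vee \to \sF(-d_\lambda)$ after a twist; assembling these over $\lambda$ gives $\alpha : V \otimes_k \Sigma_{\bar i}^\vee \to \sF$ (where $V \otimes_k \Sigma_{\bar i}^\vee$ means $\oplus_\lambda \Sigma_{\bar i}^\vee(d_\lambda)$).

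It remains to check that $H^1_*(\alpha \otimes 1_{\Sigma_i^\vee})$ has image $V$. The point is naturality of the connecting homomorphism: the map $\alpha$ fits into a morphism from the direct sum $\oplus_\lambda$ of twisted copies of the sequence (\ref{sp8})$^\vee \otimes \Sigma_i^\vee$ to the sequence (\ref{sp8})$^\vee \otimes \sF \otimes \Sigma_i^\vee$ above, so there is a commuting square relating $\oplus_\lambda H^0_*(\Sigma_{\bar i}^\vee(d_\lambda) \otimes \Sigma_{\bar i})$, which contains the identity sections, to $H^0_*(\sF \otimes \Sigma_{\bar i})$, and correspondingly on the $H^1_*(- \otimes \Sigma_i^\vee)$ level. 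Since the generator of $H^1_*(\Sigma_{\bar i}^\vee \otimes \Sigma_i^\vee) = k$ is the image of the identity section under the connecting map, chasing the square shows that the $H^1_*(-\otimes\Sigma_i^\vee)$-image of the basis element $\Sigma_{\bar i}^\vee(d_\lambda)$ (with its identity section) is exactly $\partial(s_\lambda) = v_\lambda$. Hence the image of $H^1_*(\alpha \otimes 1_{\Sigma_i^\vee})$ is the span of the $v_\lambda$, which is $V$.

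The main obstacle — though it is more bookkeeping than a genuine difficulty — is making the index conventions and twists consistent: one has to track $\Sigma_i$ versus $\Sigma_{\bar i}$ carefully through the dualities ($\Sigma_i^\vee \cong \Sigma_{\bar i}(-1)$, with $\bar i$ the identity or the transposition depending on $n \bmod 4$) and keep the grading shifts $d_\lambda$ straight so that "$V \otimes \Sigma_{\bar i}^\vee$" genuinely means $\oplus_\lambda \Sigma_{\bar i}^\vee(d_\lambda)$ with the $d_\lambda$ the degrees of a homogeneous basis of $V$. A second small point is that the construction depends on the chosen preimages $s_\lambda$, so $\alpha$ is not canonical; but any choice works for the stated conclusion since only the image of $H^1_*(\alpha \otimes 1_{\Sigma_i^\vee})$ is asserted, and that image is $V$ independent of the choices, by the naturality argument above.
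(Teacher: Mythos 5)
Your proposal is correct and follows essentially the same route as the paper: both use the dual of the canonical sequence (\ref{sp8}) tensored by $\sF$, lift $V$ through the connecting homomorphism to a subspace of $H^0_*(\sF\otimes\Sigma_{\bar i})$ (you do this basis element by basis element, the paper takes the subspace $V'$ all at once), and then conclude via naturality of the connecting map and the fact that $\mathrm{Id}_{\Sigma_i}$ generates $H^1_*(\Sigma_{\bar i}^\vee\otimes\Sigma_i^\vee)=k$. The only cosmetic difference is that the paper phrases the last step as a commuting diagram of short exact sequences rather than a chase of identity sections, but the content is identical.
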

\begin {proof}
 Consider the dual canonical sequence (\ref{sp8}) tensored
by $\sF$
$$0 \to  \sF\otimes \Sigma_{i}^\vee \rightarrow \sF\otimes \sO^{\oplus 2^{l}}  \rightarrow \sF\otimes \Sigma_{\bar i} \to 0.$$
 We get
$$H^0(\sF\otimes \Sigma_{\bar i})\to  H^1(\sF\otimes \Sigma_{i}^\vee)\to H^1(\sF\otimes \sO^{\oplus 2^{l}})$$
There is a graded subspace $V'$ of $H^0_*(\sF\otimes \Sigma_{\bar i} )$ which is mapped isomorphically to
$V \subset H^1_*(\F\otimes \Sigma_{i}^\vee)$.
This induces a map $\alpha: V'\otimes_k \Sigma_{\bar i}^\vee\to \sF.$\\
 Thus we can construct the following commuting diagram
$$
\begin{array}{cccccccc}

 0\to & \sF\otimes \Sigma_{ i}^\vee &\xrightarrow{ 1\otimes v_i^\vee} &\sF\otimes \sO^{\oplus 2^{l}}
 &\xrightarrow{1\otimes u_i^\vee} &\sF\otimes \Sigma_{\bar i}\to 0  \\
    &\uparrow{\alpha\otimes 1}   &            &  \uparrow{\alpha\otimes 1} &
    & \uparrow{\alpha\otimes 1} &\\
 0\to&(V'\otimes_k \Sigma_{\bar i}^\vee)\otimes \Sigma_{i}^\vee &\xrightarrow{ 1\otimes
 v_i^\vee}
 &(V'\otimes_k \Sigma_{\bar i}^\vee) \otimes \sO^{\oplus 2^{l}}
&\xrightarrow{ 1\otimes u_i^\vee} &(V'\otimes_k \Sigma_{\bar i}^\vee)\otimes \Sigma_{\bar i}\to 0
\end{array}
$$

Then $H^1_*(\alpha \otimes 1): H^1_*((V'\otimes_k\Sigma_{\bar i}^\vee) \otimes \Sigma_{ i} ^\vee) \to
H^1_*(\sF \otimes \Sigma_{ i} ^\vee)$ gives $V' \cong V$.

\end{proof}

\begin{corollary} Let  $\sF$ be a vector bundle on $\sQ_n$. Then any graded vector subspace $V$ of $\Sigma_i$-socle elements in
$H^1_*(\sF \otimes \Sigma_{ i}^\vee)_{soc}$ is an $A$-submodule of $H^1_*(\sF \otimes \Sigma_{ i}^\vee)_{soc}$.
\end{corollary}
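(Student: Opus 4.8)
The plan is to deduce from Lemmas~\ref{socle} and~\ref{ot} that the irrelevant maximal ideal $\mathfrak{m}\subset A$ annihilates every $\Sigma_i$-socle element of $\sF$. Granting this, the corollary follows at once: for a graded $k$-subspace $V$ of $H^1_*(\sF\otimes\Sigma_i^\vee)_{soc}$, the decomposition $A = k\oplus\mathfrak{m}$ (valid because $A_0 = H^0(\sO_{\sQ_n}) = k$) gives $A\cdot V = k\cdot V + \mathfrak{m}\cdot V = V$, so $V$ is an $A$-submodule of $H^1_*(\sF\otimes\Sigma_i^\vee)$, hence of $H^1_*(\sF\otimes\Sigma_i^\vee)_{soc}$.

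So I would fix a graded $k$-subspace $V \subseteq H^1_*(\sF\otimes\Sigma_i^\vee)_{soc}$. By Remark~\ref{socle-remark}(1) applied to the ACM bundle $\Sigma_i$, the module $H^1_*(\sF\otimes\Sigma_i^\vee)_{soc}$ has finite length over $k$, so $V$ is a finite-dimensional graded subspace of $\Sigma_i$-socle elements and thus satisfies the hypotheses of Lemma~\ref{socle}. That lemma produces a homomorphism $\alpha\colon V\otimes_k\Sigma_{\bar i}^\vee \to \sF$ for which the $A$-module homomorphism $H^1_*(\alpha\otimes 1_{\Sigma_i^\vee})$ has image exactly $V$.

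The key step is then to identify the source $H^1_*\big((V\otimes_k\Sigma_{\bar i}^\vee)\otimes\Sigma_i^\vee\big)$ of this homomorphism. As a sheaf, $V\otimes_k\Sigma_{\bar i}^\vee$ is a finite direct sum of twists $\Sigma_{\bar i}^\vee(a)$; tensoring with $\Sigma_i^\vee$ and taking $H^1_*$, Lemma~\ref{ot} identifies this source with a finite direct sum of copies of $H^1_*(\Sigma_{\bar i}^\vee\otimes\Sigma_i^\vee)(a) = k(a)$, i.e. of shifted copies of the residue field $A/\mathfrak{m}$. Such a module is annihilated by $\mathfrak{m}$; and since $V$ is its image under an $A$-module map, $\mathfrak{m}\cdot V = 0$, which is precisely what was needed.

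I do not anticipate a genuine obstacle: essentially all the work is already done in Lemmas~\ref{socle} and~\ref{ot}, and what remains is bookkeeping — tracking twists so that $(V\otimes_k\Sigma_{\bar i}^\vee)\otimes\Sigma_i^\vee$ really is a direct sum of shifts of $\Sigma_{\bar i}^\vee\otimes\Sigma_i^\vee$, and noting the finite-length fact that legitimizes the use of Lemma~\ref{socle}. For a reader who prefers to sidestep Lemma~\ref{socle}, the same conclusion admits a direct proof: by the long exact cohomology sequence of the $\sF$-tensored dual of the canonical sequence (\ref{sp8}), each $\Sigma_i$-socle element has the form $\partial(w)$ for some $w\in H^0_*(\sF\otimes\Sigma_{\bar i})$, i.e. it is the cup product $w\cup\varepsilon$ of $w$ with the extension class $\varepsilon\in H^1(\Sigma_{\bar i}^\vee\otimes\Sigma_i^\vee)$ of (\ref{sp8}); then for homogeneous $x\in\mathfrak{m}$ one has $x\,(w\cup\varepsilon) = w\cup(x\varepsilon) = 0$, since $x\varepsilon$ lies in a positive-degree component of $H^1_*(\Sigma_{\bar i}^\vee\otimes\Sigma_i^\vee)$, which vanishes by Lemma~\ref{ot}.
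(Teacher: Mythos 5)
Your proposal is correct and follows essentially the same route as the paper: both apply Lemma~\ref{socle} to produce the $A$-module homomorphism $H^1_*(\alpha\otimes 1_{\Sigma_i^\vee})$ with image $V$, and then use Lemma~\ref{ot} to see that its source has trivial $A$-module structure (annihilated by the irrelevant maximal ideal), so that $V$ is an $A$-submodule. The extra cup-product variant you sketch is a fine alternative but is not needed; the main argument matches the paper's.
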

\begin{proof} In proof above, $H^1_*(\alpha \otimes 1_{\Sigma_{i} ^\vee})$ is an $A$-module homomorphism, and by Lemma
\ref{ot}, the $A$-module $H^1_*((V'\otimes_k\Sigma_{\bar i}^\vee) \otimes \Sigma_{i} ^\vee)$ has the trivial $A$-module structure
where multiplication by graded elements in $A$ of positive degree is zero .
\end{proof}

For any vector bundle $\sE$ on $\sQ_n$, we will define invariants as follows:

\begin{definition}{\bf{(Horrocks Invariants of $\sE$)}} Let $\sE$ be a vector bundle on $\sQ_n$. It has a minimal associated Horrocks
datum  $(\sF_{min}, \beta)$. Let $V_i =
\ker H^1(\beta \otimes Id_{\Sigma_{i}^\vee}): H_*^1(\sF_{min}\otimes \Sigma_{i}^\vee) \to H^1_*(\sE\otimes \Sigma_{i}^\vee)$. Then
$V_i$ is a graded subspace of $H^1_*(\sF_{min} \otimes \Sigma_{ i}^\vee)_{soc}$.
The collection $(\sF_{min}, V_i)$ will be called Horrocks invariants for $\sE$. (As usual, when $n$ is even, this means
$(\sF_{min}, V_1,V_2)$ and when $n$ is odd, it means $(\sF_{min}, V_1)$.)
\end{definition}

\begin{remark}\end{remark}
\begin{enumerate}
\item $\sE$ is ACM if and only if $\sF_{min}$ is the zero bundle.  $V_i = 0$ as well.
\item In general, $V_i=0, \forall i$ if and only if $\sE$ is a direct sum of an Horrocks data bundle and an ACM bundle.
\item If $\sB$ is an ACM bundle, then $\sE$ and $\sE\oplus \sB$ will have the same Horrocks invariants.
\item If $(\sF_{min}, \beta, V_i)$ is a collection of Horrocks invariants for $\sE$ and $\phi$ is an automorphism of $\sF_{min}$, then
$\phi$ can be used to change $\beta: \sF_{min} \to \sE$ and hence also $V_i$ to get a new collection of Horrocks invariants for
$\sE$.
\item The definition could have used an arbitrary Horrocks data bundle $\sF$ for $\sE$ instead of the minimal one $\sF_{min}$
since $H^1_*(\Sigma_{i}^\vee)=0$ and hence the description of $V_i$ would not change.
\end{enumerate}

A stronger existence theorem for quadrics can now be stated than was proved in Theorem \ref{existence}. Below we here have a
statement that deals with arbitrary subspaces of socle elements.

\begin{theorem}{\bf(Existence)} Let $\sF_{min}$ be a minimal Horrocks data bundle on $\sQ_n$ and  let $V_i$ be a
graded vector subspace of $H^1_*(\sF_{min} \otimes \Sigma_{ i}^\vee)_{soc}$. Then there exists a vector bundle $\sE$ with the
Horrocks invariants $(\sF_{min},V_1,V_2)$ (when $n$ is even) and invariants $(\sF_{min},V_1)$ (when $n$ is odd).
\end{theorem}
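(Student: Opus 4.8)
The plan is to build $\sE$ as an extension of $\sF_{min}$ by an explicit ACM bundle, mimicking the construction in Theorem \ref{existence} but using Lemma \ref{socle} so that we are no longer restricted to socle submodules generated by minimal generators. First I would observe that by the Corollary following Lemma \ref{socle}, each given graded vector subspace $V_i \subseteq H^1_*(\sF_{min}\otimes \Sigma_i^\vee)_{soc}$ is automatically an $A$-submodule, with trivial module structure (positive-degree elements of $A$ act as zero). So for each $i$ present (i.e. $i=1,2$ when $n$ is even, $i=1$ when $n$ is odd), Lemma \ref{socle} applied to $\sF = \sF_{min}$ and the socle subspace $V_i$ furnishes a homomorphism $\alpha_i: V_i' \otimes_k \Sigma_{\bar i}^\vee \to \sF_{min}$ (where $V_i'\subseteq H^0_*(\sF_{min}\otimes\Sigma_{\bar i})$ maps isomorphically onto $V_i$) with the property that $H^1_*(\alpha_i\otimes 1_{\Sigma_i^\vee})$ has image exactly $V_i$.

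Next I would assemble these into a single map. Set $\sA_0 = \bigoplus_i (V_i'\otimes_k \Sigma_{\bar i}^\vee)$ and let $\alpha = (\alpha_i): \sA_0 \to \sF_{min}$ be the combined homomorphism. Dualizing $\alpha$ and taking the cokernel, or more directly pushing out along $\alpha$, I would form the short exact sequence
\[ 0 \to \sF_{min} \xrightarrow{\beta} \sE \xrightarrow{\rho} \sB \to 0 \]
where $\sB$ is obtained from $\sA_0$ in the way $\sB = \oplus (V_i^{max}\otimes \sB_i)$ was in Theorem \ref{existence}; concretely, $\sB = \bigoplus_i (V_i'\otimes_k \Sigma_{\bar i}^{\vee\vee})$-type ACM bundle arising as the middle-to-right term. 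More precisely, the dual of $\alpha$ gives $\sF_{min}^\vee \to \sA_0^\vee$, whose cokernel $\sC$ is ACM (being a cokernel of a map of ACM bundles that is surjective in cohomology by the image condition), and $\sE := \sC^\vee$ with the natural inclusion $\sF_{min}\hookrightarrow\sE$. One then checks, exactly as in the proof of Theorem \ref{existence}, that since $V_i$ consists of $\Sigma_i$-socle elements the pullback of this extension along any map $\sO_X(b)\to\sB$ splits, hence $H^0_*(\rho)$ is surjective and $(\sF_{min},\beta)$ is a genuine Horrocks datum for $\sE$ (using Remark \ref{socle-remark}: the map on $H^1_*$ twisted by the line bundles $\sO_X(-a_j)$ resolving $\sB$ is an isomorphism).

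Finally I would verify that the Horrocks invariants of $\sE$ are precisely $(\sF_{min}, V_i)$. Tensoring the defining sequence by $\Sigma_i^\vee$ and taking cohomology, the connecting map factors $H^0_*(\sB\otimes\Sigma_i^\vee)\to H^1_*(\sF_{min}\otimes\Sigma_i^\vee)$, and by Lemma \ref{ot} the relevant piece of $H^0_*(\sB\otimes\Sigma_i^\vee)$ (the $V_i'\cdot I_{\Sigma_i}$ part) maps isomorphically onto $V_i$; moreover the other $\Sigma_j$-summands of $\sB$ contribute nothing to $H^1_*(\text{--}\otimes\Sigma_i^\vee)$ when $j\ne \bar i$, again by Lemma \ref{ot}. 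This shows $\ker H^1_*(\beta\otimes 1_{\Sigma_i^\vee}) = V_i$ exactly, not merely the submodule generated by a generating set. The main obstacle I anticipate is the bookkeeping needed to ensure that $\sB$ (equivalently $\sC = \coker\alpha^\vee$) is honestly ACM and that no spurious contributions enter $\ker H^1_*(\beta\otimes 1_{\Sigma_i^\vee})$ from the cross-terms among the different spinor summands or from $\sO_X$-summands — but this is controlled entirely by Lemma \ref{ot} (the vanishing $H^1_*(\Sigma_j^\vee\otimes\Sigma_i^\vee)=0$ for $j\ne\bar i$) together with the fact that $H^1_*(\sF_{min})=0$ since $\sF_{min}$ is a Horrocks data bundle with finite-length intermediate cohomology and $\Sigma_{\bar i}^\vee$ is ACM.
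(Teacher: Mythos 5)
Your overall strategy matches the paper's: realize $\sE$ as an extension $0\to\sF_{min}\xrightarrow{\beta}\sE\to\sB\to 0$ with $\sB=\oplus_i(V_i\otimes_k\Sigma_i)$ defined by the given socle elements, use the socle property to see that $(\sF_{min},\beta)$ is an honest Horrocks datum, and then show the connecting map $H^0_*(\sB\otimes\Sigma_i^\vee)\to H^1_*(\sF_{min}\otimes\Sigma_i^\vee)$ has image exactly $V_i$. But two of your steps are genuinely problematic. First, your ``more precise'' construction --- dualize $\alpha:\sA_0\to\sF_{min}$ and set $\sE:=(\coker \alpha^\vee)^\vee$ --- does not work: $\alpha^\vee:\sF_{min}^\vee\to\sA_0^\vee$ has no reason to be injective (typically $\mathrm{rank}\,\sF_{min}>\mathrm{rank}\,\sA_0$), $\sF_{min}$ is not ACM, and the cokernel is not the desired bundle. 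What you actually want is the pushout along $\alpha$ of the direct sum of canonical sequences $0\to V_i'\otimes\Sigma_{\bar i}^\vee\to V_i'\otimes\sO^{\oplus 2^l}\to V_i'\otimes\Sigma_i\to 0$ from (\ref{sp8}); note then that the quotient term is $\oplus_i V_i'\otimes_k\Sigma_i$, not $\oplus_i V_i'\otimes_k\Sigma_{\bar i}^{\vee\vee}=\oplus_i V_i'\otimes_k\Sigma_{\bar i}$ as you wrote (these differ when $n\equiv 0 \pmod 4$).

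Second, and more seriously, the verification that $\ker H^1_*(\beta\otimes 1_{\Sigma_i^\vee})$ equals $V_i$ \emph{exactly} is where all the work lies, and your argument omits the main case. That kernel is the image of the connecting map on all of $H^0_*(\sB\otimes\Sigma_i^\vee)=\oplus_j V_j\otimes H^0_*(\Sigma_j\otimes\Sigma_i^\vee)$, which contains, besides the degree-zero components $V_i\cdot I_{\Sigma_i}$, all positive-degree sections $s:\Sigma_i(-b)\to\Sigma_j(a)$ with $a+b>0$, including those with $j=i$. Your appeal to Lemma \ref{ot} handles only the identity components and the cross-terms; it says nothing about these positive-degree sections, and killing them is precisely what the paper's proof labors over (each such $s$ factors through $\sO^{\oplus 2^l}(a)$ by Lemma \ref{ot}, and the pullback along a free bundle splits by the socle property). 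Your setup could close this gap more cleanly than the paper does: for the pushout construction the connecting map factors, by naturality, through $H^1_*(\sA_0\otimes\Sigma_i^\vee)=\oplus_j V_j'\otimes H^1_*(\Sigma_{\bar j}^\vee\otimes\Sigma_i^\vee)$, which by Lemma \ref{ot} collapses to $V_i'$ concentrated in the degrees of $V_i$; hence the image is contained in $H^1_*(\alpha\otimes 1_{\Sigma_i^\vee})(V_i')=V_i$ by Lemma \ref{socle}, with no case analysis. But this factorization is never stated in your write-up, so as it stands the crucial inclusion $\ker H^1_*(\beta\otimes 1_{\Sigma_i^\vee})\subseteq V_i$ is unproved. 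Finally, your parenthetical claim that $H^1_*(\sF_{min})=0$ is false --- finite length is not zero, and the whole point of $\sF_{min}$ is that $H^1_*(\sF_{min})\cong H^1_*(\sE)$ --- though nothing in the argument actually needs it.
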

\begin{proof} We follow the approach in Theorem \ref{existence}. For notational convenience, assume $n$ is even, so $i=1,2$.
Let $\sB = (V_1\otimes_k \Sigma_1) \oplus (V_2 \otimes_k \Sigma_2)$. As in the earlier proof, we obtain a short exact sequence
(defining $\sE$):
$$ 0 \to \sF_{min} \xrightarrow{\beta} \sE \xrightarrow{\rho} (V_1\otimes_k \Sigma_1) \oplus (V_2 \otimes_k \Sigma_2) \to 0,$$
where $(\sF_{min},\beta)$ is a Horrocks datum for the bundle $\sE$ so obtained. Our goal is now to show that the image of
$H^0_*(\sB \otimes \Sigma_i^\vee) \to H^1_*(\sF_{min}\otimes \Sigma_i^\vee)$ is $V_i$, whereas in the earlier proof, we showed
that it contained $V_i$.
Let $\Sigma_j(a)$ be any summand in $\sB$, and pick a non-zero section $s \in H^0(\Sigma_j(a)\otimes \Sigma_i^\vee(b)$, or a map
$\Sigma_i(-b) \xrightarrow{s} \Sigma_j(a)$. Then $a+b\geq 0$. $s\in H^0(\sB \otimes \Sigma_i^\vee(b))$ maps to zero in
$H^1_*(\sF_{min}\otimes \Sigma_i^\vee)$ iff the pullback of the short exact sequence by the map $s:\Sigma_i(-b) \xrightarrow{}
\sB$ is a split sequence.  If $a+b>0$, by Lemma \ref{ot}, the map $\Sigma_i(-b) \xrightarrow{s} \Sigma_j(a)$ factors through
$\sO^{\oplus 2^{l}}(a)$. The pullback of the short exact sequence by the map $\sO^{\oplus 2^{l}}(a) \to
\Sigma_j(a) \subseteq \sB$ splits since the extension is defined by socle elements. Hence so does the pullback by the map
$\Sigma_i(-b) \to \Sigma_j(a)\subseteq \sB$.

It follows that
the only non-zero contribution from this summand $\Sigma_j(a)$ to the image of $H^0(\sB \otimes \Sigma_i^\vee(b))$ occurs when $a+b=0$. If
$i \neq j$, $\Hom(\Sigma_i,\Sigma_j) =0$ and so no section $s$ can be found. If $i=j$, $\End(\Sigma_i)=k$ and it follows
that the image of $s$ lies in $V_i$. Thus the image of $H^0_*(\sB \otimes \Sigma_i^\vee)$ is exactly $V_i$.
\end{proof}

As pointed out after Theorem \ref{existence}, if $\sF_{min}$ has a $\Psi$-sequence $0 \to \sF_{min} \to \sP^0 \to \sG_{min} \to 0$, then
the $\sE$ constructed in the above theorem has $\gamma$-sequence given as
$$ 0 \to \sE \to \oplus_i (V_i \otimes_k \Sigma_i) \oplus \sP^0 \to \sG_{min} \to 0.$$
It is also easy to see that since $\sF_{min}$ has no summands of type $\Sigma_i$, neither does $\sE$. Conversely, suppose
$\sE$ is a vector bundle on $\sQ_n$ with  Horrocks invariants $(\sF_{min},V_i)$ and with no summands of type $\Sigma_i$. It
will follow from the next theorems that $\sE$ has a $\gamma$-sequence with $\sA_{\sE} = \oplus_i (V_i \otimes_k \Sigma_i)
\oplus \sP'$, where $\sP'$ is free.

The following two uniqueness results follow easily from the general theorems of Section 1.

\begin{theorem}\label{uniqueness}{\bf(Uniqueness)}\
Given $\sE,\sE'$ two bundles on $\Q_n$ without ACM summands, with Horrocks invariants $(\sF_{min}, V_i)$, $(\sF'_{min},
V_i')$.
 Suppose $\exists \phi: \sF_{min} \cong \sF'_{min}$, such that the induced isomorphisms
 $H^1_*(\sF_{min} \otimes \Sigma_{i}^\vee)\cong H^1_*(\sF'_{min} \otimes \Sigma_{i}^\vee)$ carry $V_i$ to $V_i'$ for each $i$.
 Then $\sE$ and $\sE'$ are isomorphic.
\end{theorem}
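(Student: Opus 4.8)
The plan is to reduce Theorem \ref{uniqueness} to the general Isomorphism Theorem \ref{iso-thm} of Section 1, using the fact that on $\sQ_n$ the only non-free irreducible ACM bundles (up to twists) are the spinor bundles $\Sigma_i$. First I would use the isomorphism $\phi:\sF_{min}\cong\sF'_{min}$ to identify the two minimal Horrocks data bundles: replacing $\beta'$ by $\beta'\circ\phi$, we may assume $\sF_{min}=\sF'_{min}$ with Horrocks data $(\sF_{min},\beta)$ and $(\sF_{min},\beta')$, and the hypothesis now says exactly that $\ker H^1_*(\beta\otimes 1_{\Sigma_i^\vee})=V_i=V_i'=\ker H^1_*(\beta'\otimes 1_{\Sigma_i^\vee})$ for every $i$. (Note $\phi$ automatically induces isomorphisms on $H^j_*$ for $1\le j\le n-1$ by Proposition \ref{min-F}, so the translation of invariants is legitimate.)

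Next I would invoke Knörrer's theorem: the middle terms $\sA_{\sE},\sA_{\sE'}$ of the $\gamma$-sequences are ACM, hence direct sums of line bundles and twisted spinor bundles $\Sigma_j(a)$. So the distinct non-free irreducible ACM summands appearing (up to twist by $\sO_X(a)$) are among $\Sigma_1$ and $\Sigma_2$ (or just $\Sigma_1$ when $n$ is odd). For each such $\sB=\Sigma_i$, Definition \ref{socl} of the module of $\sB$-socle elements uses a minimal generating sequence $\oplus_j\sO_X(a_j)\to\Sigma_i\to 0$; but the canonical sequence (\ref{sp8}) in dual form, $0\to\Sigma_i^\vee\to\sO^{\oplus 2^l}\to\Sigma_{\bar i}\to 0$, shows that the relevant comparison map $H^1_*(\sF\otimes\Sigma_i^\vee)\to H^1_*(\sF\otimes\sO^{\oplus 2^l})$ computes precisely $H^1_*(\sF\otimes\Sigma_i^\vee)_{soc}$, so the $\Sigma_i$-socle elements in the sense of Section 2 coincide with those of Section 1. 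Thus the hypothesis $V_i=V_i'$ is exactly the hypothesis ``$\ker H^1_*(\beta\otimes 1_{\sB_i^\vee})=\ker H^1_*(\beta'\otimes 1_{\sB_i^\vee})$'' required by Theorem \ref{iso-thm}, for every non-free irreducible ACM bundle $\sB_i$ that can occur.

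Finally, since $\sE$ and $\sE'$ are assumed to have no ACM summands, all the hypotheses of Theorem \ref{iso-thm} are met, and that theorem yields $\sE\cong\sE'$ directly. The one point needing a small verification — and the only real obstacle — is the identification of the two notions of ``$\Sigma_i$-socle element,'' i.e.\ checking that a minimal generating map $\oplus_j\sO_X(a_j)\to\Sigma_i$ factors the trivial map $\sO^{\oplus 2^l}\to\Sigma_{\bar i}$ appropriately so that the kernel of $H^1_*(\sF\otimes\Sigma_i^\vee)\to H^1_*(\sF\otimes\sO^{\oplus 2^l})$ agrees with $H^1_*(\sF\otimes\Sigma_i^\vee)_{soc}$; this follows because $\Sigma_i$ is globally generated in degree $0$ by (\ref{sp8}) and $\Sigma_i(-1)$ has no sections, so $\sO^{\oplus 2^l}\to\Sigma_i$ is a minimal presentation. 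Everything else is a direct citation of Theorems \ref{min-F}, \ref{iso-thm} and Knörrer's theorem.
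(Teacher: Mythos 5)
Your proof is correct and follows essentially the same route as the paper: identify $\sF_{min}$ with $\sF'_{min}$ via $\phi$ so that the kernels $\ker H^1_*(\beta\otimes 1_{\Sigma_i^\vee})$ and $\ker H^1_*(\beta'\otimes 1_{\Sigma_i^\vee})$ coincide, then apply Theorem \ref{iso-thm}, with Kn\"{o}rrer's theorem guaranteeing that the only non-free irreducible ACM bundles that can occur are the twisted spinor bundles. Your extra verification that the two notions of $\Sigma_i$-socle element agree is harmless but not actually needed, since Theorem \ref{iso-thm} is stated directly in terms of the kernels of $H^1_*(\beta\otimes 1_{\sB_i^\vee})$, which is exactly how the invariants $V_i$ are defined.
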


\begin{proof}
We may assume that $\sE$ and $\sE'$ have the same minimal Horrocks data bundle $\sF_{min}$.
If $\sF_{min}$ is zero,  $\sE, \sE'$ are ACM and the theorem does not apply. So we will assume that $\sF_{min}$ is a
non-free minimal
Horrocks data bundle. If $V_i$ are $0$ for $i=1,2$, then $\sE$ is stably equivalent to $\sF_{min}$, and being without
ACM summands,
it must be isomorphic to $\sF_{min}$. Since $V_i'$ will also be zero, the same is true for $\sE'$ and we conclude that $\sE \cong
\sE'$. So assume $V_i$ is non-zero for some $i$. If there is an automorphism $\phi$ of $\sF_{min}$ which carries $V_i$ to $V'_i$, in
the diagram of proposition \ref{diag} for $\sE'$, we may replace $\beta': \sF_{min} \to \sE'$ by $ \beta'\circ\phi^{-1}$
\textit{etc.} and
assume that $\beta$ and $\beta'$ give the same kernel $V_i$ in $H^1_*(F_{min}\otimes \Sigma_{ i}^\vee)$.

We can now apply Theorem \ref{iso-thm} to conclude the result.
\end{proof}

\begin{theorem}\label{isomorphism} Let $\sE, \sE'$ be vector bundles on $\sQ_n$ with no ACM summands. Suppose
$\sigma:\sE \to \sE'$ is a homomorphism such that $\sigma$ induces $H^j_*(\sE)\cong H^k_*(\sE')$ for $1\leq j \leq n-1$ and
also isomorphisms $H^1_*(\sE\otimes \Sigma_i^\vee)\cong H^1_*(\sE'\otimes \Sigma_i^\vee)$ for all $i$.
Then $\sigma$ is an isomorphism.
\end{theorem}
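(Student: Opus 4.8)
The plan is to reduce Theorem \ref{isomorphism} to Theorem \ref{iso-thm2} of Section 1. Recall that Theorem \ref{iso-thm2} asserts: if $\sigma:\sE \to \sE'$ induces isomorphisms on all intermediate cohomology $H^i_*$ for $1\le i \le n-1$ and also on $H^1_*(\,\cdot\,\otimes \sB^\vee)$ for every non-free irreducible ACM bundle $\sB$ appearing in $\sA_{\sE'}$, then $\sigma$ is a split surjection with ACM kernel $\sC$, so that $\sE \cong \sE' \oplus \sC$. The point is that over $\sQ_n$, by Kn\"{o}rrer's theorem the only non-free irreducible ACM bundles (up to twist by $\sO_X(a)$) are the spinor bundles $\Sigma_i(a)$; and twisting by $\sO_X(a)$ does not change which cohomology module $H^1_*$ we are testing, since $H^1_*(\,\cdot\,\otimes \Sigma_i(a)^\vee) = H^1_*(\,\cdot\,\otimes \Sigma_i^\vee)$ up to a shift in grading that is already absorbed by the $*$-decoration. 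So the hypothesis ``$H^1_*(\sigma\otimes 1_{\Sigma_i^\vee})$ is an isomorphism for all $i$'' is exactly the hypothesis needed to invoke Theorem \ref{iso-thm2}.

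First I would note that if $\sF_{min}$ (the minimal Horrocks data bundle of $\sE$) is zero, then $\sE$ and $\sE'$ are both ACM, hence by Kn\"{o}rrer sums of line bundles and twisted spinor bundles; but $\sE,\sE'$ have no ACM summands, forcing them to be zero, and there is nothing to prove. So assume $\sF_{min}$ is non-free. Second, I would observe that the only non-free irreducible ACM bundles on $\sQ_n$ that can occur in $\sA_{\sE'}$ are twists $\Sigma_i(a)$ of spinor bundles, so the ``extra'' hypothesis of Theorem \ref{iso-thm2} — isomorphism on $H^1_*(\,\cdot\,\otimes \sB^\vee)$ for all such $\sB$ — is subsumed by the assumed isomorphisms $H^1_*(\sE\otimes \Sigma_i^\vee)\cong H^1_*(\sE'\otimes \Sigma_i^\vee)$. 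Third, I would apply Theorem \ref{iso-thm2}: $\sigma$ is a split surjection and $\sE \cong \sE'\oplus \sC$ with $\sC$ ACM. Since $\sE$ has no ACM summands, $\sC=0$, so $\sigma$ is an isomorphism.

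One subtlety I would want to be careful about is the interchange of the two typographical conventions: Theorem \ref{iso-thm2} refers to $\sA_{\sE'}$, the middle term of the $\gamma$-sequence of $\sE'$, and one must know that every irreducible ACM bundle appearing there is a twist of a spinor bundle (which is immediate from Kn\"{o}rrer, since $\sA_{\sE'}$ is ACM by Theorem \ref{gamma}(2)), together with possibly the free summand $\sO_X$ — but for the line bundle summand the relevant map is $H^i_*$ with $i$ in range, already covered by hypothesis. I would also remark that twisting a spinor bundle $\Sigma_i$ by $\sO_X(a)$ gives $\Sigma_i(a)^\vee = \Sigma_i^\vee(-a)$, and tensoring with a $*$-graded module and then taking $H^1_*$ recovers $H^1_*(\,\cdot\,\otimes\Sigma_i^\vee)$ with a shift of grading, which is the same graded vector space up to relabelling of degrees; since the hypothesis is phrased with the full graded module $H^1_*$, this shift is harmless.

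The main obstacle is not any computation but rather bookkeeping: making sure that the ACM bundles ``appearing in $\sA_{\sE'}$'' in the sense of Theorem \ref{iso-thm2} are precisely covered by the spinor-bundle hypothesis of the present theorem, and that the no-ACM-summand hypothesis correctly kills the kernel $\sC$. Concretely, the proof is:

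\begin{proof}
If $\sF_{min}$ (the minimal Horrocks data bundle for $\sE$) is zero, then $\sE$ is ACM; but $\sE$ has no ACM summands, so $\sE=0$, $\sE'=0$, and $\sigma$ is trivially an isomorphism. So assume $\sF_{min}$ is non-free. By Theorem \ref{gamma}(2), the middle term $\sA_{\sE'}$ of the $\gamma$-sequence of $\sE'$ is an ACM bundle on $\sQ_n$, so by Kn\"{o}rrer's theorem it is a direct sum of line bundles and twisted spinor bundles $\Sigma_i(a)$. Hence the non-free irreducible ACM bundles appearing in $\sA_{\sE'}$ are, up to twists by $\sO_{\sQ_n}(a)$, among the spinor bundles $\Sigma_i$. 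For such a $\sB = \Sigma_i(a)$ we have $\sB^\vee = \Sigma_i^\vee(-a)$, so $H^1_*(\sE \otimes \sB^\vee) = H^1_*(\sE\otimes \Sigma_i^\vee)$ as a graded module up to a shift of grading by $a$, and likewise for $\sE'$; thus the assumed isomorphism $H^1_*(\sE\otimes \Sigma_i^\vee)\cong H^1_*(\sE'\otimes\Sigma_i^\vee)$ induced by $\sigma$ gives the isomorphism $H^1_*(\sE\otimes \sB^\vee) \cong H^1_*(\sE'\otimes \sB^\vee)$. Together with the hypothesis that $\sigma$ induces isomorphisms $H^j_*(\sE)\cong H^j_*(\sE')$ for $1\le j\le n-1$, all the hypotheses of Theorem \ref{iso-thm2} are satisfied. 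By that theorem, $\sigma$ is a split surjection decomposing $\sE$ as $\sE'\oplus \sC$ with $\sC$ an ACM bundle. Since $\sE$ has no ACM summands, $\sC=0$, and therefore $\sigma:\sE\to\sE'$ is an isomorphism.
\end{proof}
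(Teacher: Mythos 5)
Your proof is correct and follows exactly the paper's route: the paper's own proof is the one-line remark that this is Theorem \ref{iso-thm2} combined with the no-ACM-summands hypothesis, and your write-up simply fills in the (correct) bookkeeping that, by Kn\"{o}rrer, the non-free irreducible ACM bundles appearing in $\sA_{\sE'}$ are twisted spinor bundles, so the spinor hypothesis supplies all the $H^1_*(\,\cdot\,\otimes\sB^\vee)$ isomorphisms needed, and the ACM kernel $\sC$ must vanish.
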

\begin{proof} This is just Theorem \ref{iso-thm2} with the additional condition that $\sE$ has no ACM summands.
\end{proof}

\section{The Veronese Surface}

The Veronese surface ${\sV}\subset\mathbb P^5$ is an arithmetically Cohen-Macaulay embedding which is not arithmetically
Gorenstein. The study of vector bundles on $\sV$ is trivial if we view $\sV$ as $\mathbb P^2$. Below we discuss how the
techniques of section one apply to the embedded variety $\sV$. With its polarization from the embedding, $\sV$ has two
irreducible, non-free ACM bundles (up to twists).
Hence, as in the case of quadric hypersurfaces of even dimension, we can define Horrrocks invariants $(\sF_{min}, V, W)$
for any vector bundle $\sE$ on $\sV$. But unlike in the case of the quadric, where $V,W$ were independent of each other,
here there is a dependency between them.

In the following discussion, we will write $\sO_{\sV}(1)$ for
$\sO_{\mathbb P^5}(1)|_{\sV}$ and $\sO_{\sV}(n)$ for $\sO_{\sV}(1)^{\otimes n}$. We will write $\sL$ for $\sO_{\mathbb P^2}(1)$
and $\sU$ for
$\Omega^1_{\sV}\otimes \sL$. Then the only irreducible ACM bundles on ${\sV}$ (with respect to the polarization
$\sO_{\sV}(1)$)
are $\sO_{\sV}(n)$, $\sL(n)$ and $\sU(n)$. In the diagram of a bundle $\sE$ on ${\sV}$ in Theorem \ref{diag},  the terms
$\A_{\sE}$ and $\sK_{\sE}$ are
built out of these three types of irreducible ACM bundles. The vector bundle $\sG$ is a free bundle and the $\Psi$-sequence
is the sheafification of a free presentation of the $A$-module $H^1_*(\sE)$.
The connection between $\A_{\sE}$ and $\sK_{\sE}$, given by the
$\Delta$-sequence in the diagram of $\sE$, is controlled by the following canonical sequences:

\begin{equation}\label{cv1}0\to \sU\xrightarrow{u} 3\sO_{\sV}\xrightarrow{v}\sL\to 0\end{equation}
and
\begin{equation}\label{cv2}0\to 3\sU(-1) \oplus \sO_{\sV}(-1) \xrightarrow{} 9\sO_{\sV}(-1) \xrightarrow{}\sU\to 0\end{equation}
where the second can be simplified non-canonically to
\begin{equation}\label{cv3}0\to 3\sU(-1)  \xrightarrow{u'} 8\sO_{\sV}(-1) \xrightarrow{v'}\sU\to 0.\end{equation}

In addition, there is the canonical sequence
\begin{equation}\label{cv4}0\to \sO_{\sV}(-1)\rightarrow 3\sL(-1)\rightarrow\sU\to 0\end{equation}

The two uniqueness theorems of Section 1 apply in this setting, where given a bundle $\sE$ on $\sV$, we can construct Horrocks
invariants for $\sE$ as $(\sF_{min},  V, W)$, where $(\sF_{min}, \beta)$ is an Horrocks datum for $\sE$, $V = \ker
[H^1_*(\sF_{min} \otimes \sL^\vee) \to H^1_*(\sE \otimes \sL^\vee)]$ and $W = \ker [H^1_*(\sF_{min} \otimes \sU^\vee) \to
H^1_*(\sE \otimes \sU^\vee)]$.
Thus to complete the classification of bundles on $\sV$ by this method, it remains to get a description of
any constraints on $V \subseteq H^1_*(\sF \otimes \sL^\vee)$ and $W \subseteq H^1_*(\sF_{min} \otimes \sU^\vee)$, and to finally
show that given $(\sF_{min}, V, W)$ with this constraints, there exists a bundle $\sE$ with those invariants.

By Remark \ref{socle-remark}, $V$ is an $A$-submodule of $\sL$-socle elements in $H^1_*(\sF_{min} \otimes \sL^\vee)_{soc}$
and $W$ is an $A$-submodule of $\sU$-socle elements in $H^1_*(\sF_{min} \otimes \sU^\vee)_{soc}$. By the following lemma, there
is no distinction between the concepts of graded $A$-submodules and graded vector subspaces of socle elements.

\begin{lemma} For any vector bundle $\sF$ on $\sV$, in the $A$-module structure of
of $H^1_*(\sF \otimes \sL^\vee)_{soc}$ as well as of $H^1_*(\sF \otimes \sU^\vee)_{soc}$, multiplication by graded elements
in $A$ of positive degree is zero.
\end{lemma}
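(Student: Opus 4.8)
The plan is to reduce the statement to the existence of suitable maps between twisted ACM bundles, exactly as in the quadric case (Lemma \ref{socle}), using the canonical sequences (\ref{cv1})--(\ref{cv4}). The point is that for each of the two non-free irreducible ACM bundles $\sB = \sL$ and $\sB = \sU$, a $\sB$-socle element $\xi \in H^1(\sF(d) \otimes \sB^\vee)$ should be realized as coming from a homomorphism out of a suitable (sum of) twisted copies of the ``partner'' bundle appearing in the relevant canonical presentation of $\sB^\vee$. Concretely, one dualizes the canonical sequence that presents $\sB$ by free modules, tensors by $\sF$, and reads off from the long exact cohomology sequence that a socle element lifts to a section of $\sF$ tensored with the kernel bundle of that presentation; that kernel bundle is again a sum of twists of ACM bundles, and since its $H^1_*$ (tensored with $\sB^\vee$) carries only the trivial $A$-module structure (multiplication by positive-degree forms kills it, by Lemma \ref{ot}-type vanishings for the Veronese), the induced map of $A$-modules forces the image $A$-submodule generated by $\xi$ to be the $k$-span of $\xi$.

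First I would make the $A$-module structure on $H^1_*(\sF \otimes \sL^\vee)_{soc}$ explicit. By definition this is the kernel of $H^1_*(\sF \otimes \sL^\vee) \to H^1_*(\sF \otimes 3\sO_{\sV}(-1))$ coming from (\ref{cv1}) dualized, i.e. $0 \to \sL^\vee \to 3\sO_{\sV}(-1) \to \sU^\vee \to 0$ (note $\sU^\vee$, since $\sU$ is the kernel in (\ref{cv1})). Tensoring with $\sF$ and taking cohomology, a socle element $\xi$ lifts to an element of $H^0_*(\sF \otimes \sU^\vee)$, giving a map $\alpha : \sU(-d) \to \sF$ (for $\xi$ in degree $d$) with $\xi$ in the image of $H^1_*(\alpha \otimes 1_{\sL^\vee})$. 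Now for $a \in A$ of positive degree, $a \cdot \xi$ is the image under $H^1_*(\alpha \otimes 1)$ of $a \cdot (\text{lift of }\xi)$, but the latter lives in $H^1_*(\sU(-d) \otimes \sL^\vee) = H^1_*(\sU \otimes \sL^\vee(-d))$, and I claim $H^1_*(\sU \otimes \sL^\vee)$ has trivial $A$-action — this is a direct cohomology computation on $\sV \cong \mathbb P^2$: $\sL^\vee = \sO_{\mathbb P^2}(-1)$, $\sU = \Omega^1_{\mathbb P^2}(1) \cdot (\text{appropriate $\sO_{\sV}$-twist bookkeeping})$, so $\sU \otimes \sL^\vee$ is a known bundle on $\mathbb P^2$ whose intermediate cohomology (over the subring $A \subset \bigoplus_n H^0(\mathbb P^2, \sO(2n))$) sits in a single degree, where positive-degree forms from $A$ act as zero for degree reasons. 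The same argument runs for $\sB = \sU$ using (\ref{cv3}) (or (\ref{cv4})), where the kernel bundle is $3\sU(-1)$ (resp. $\sO_{\sV}(-1)$), and one checks $H^1_*(\sU \otimes \sU^\vee)$ and $H^1_*(\sO_{\sV} \otimes \sU^\vee)$ likewise carry trivial $A$-module structure.

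The main obstacle is the Veronese bookkeeping: $\sV$ is embedded by $\sO_{\mathbb P^2}(2)$, so the grading of $A$-modules is by the even-twist filtration, and I must carefully track how an $A$-module generator in internal degree $d$ (with respect to $\sO_{\sV}(1)$, i.e. $\sO_{\mathbb P^2}(2)$) sits inside $H^1(\mathbb P^2, -\otimes \sO_{\mathbb P^2}(2d))$, and verify that the relevant $H^1$ is concentrated in exactly one $A$-degree so that $A_{>0}$ acts as zero. The cohomology vanishings needed are: $H^1(\mathbb P^2, \Omega^1_{\mathbb P^2}(j)) \neq 0$ only for $j = 0$, and $H^1(\mathbb P^2, \Omega^1_{\mathbb P^2} \otimes \Omega^1_{\mathbb P^2}(j))$, $H^1(\mathbb P^2, \sO(j))$ are controlled similarly; combining these with the fact that $\sU \otimes \sB^\vee$ unwinds to such bundles (up to a fixed line-bundle twist), the concentration in one $A$-degree follows. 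Everything else — the long exact sequences, the fact that $H^1_*(\alpha \otimes 1)$ is an $A$-module map, and passing from a single socle element to an arbitrary graded subspace — is formal and parallels the quadric proof.
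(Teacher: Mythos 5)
Your argument is correct in substance, but it is organized differently from the proof given in the paper, so it is worth comparing the two. You lift a socle element $\xi \in H^1(\sF(d)\otimes\sB^\vee)$ through the connecting map of the dualized minimal free presentation of $\sB$ to a morphism $\alpha$ from a twist of the syzygy bundle of that presentation into $\sF$, write $\xi$ as the image under $H^1_*(\alpha\otimes 1_{\sB^\vee})$ of the canonical extension class, and then kill $A_{>0}\cdot\xi$ by observing that this class lives in a module with trivial $A_{>0}$-action: $H^1_*(\sU\otimes\sL^\vee)=H^1_*(\Omega^1_{\mathbb P^2})$, resp. $H^1_*(\sU(-1)\otimes\sU^\vee)$, each concentrated in a single $A$-degree. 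This is exactly the mechanism of Lemma \ref{socle} and the corollary following it in the quadric section, transplanted to $\sV$. The paper's proof of the present lemma instead works on the other side of the presentation: it reads $x\cdot\eta$ as the pullback of the extension $\eta$ along $\cdot x:\sB(-1)\to\sB$, shows that $\cdot x$ factors through the free cover $\oplus_j\sO_{\sV}(a_j)\to\sB$ because the pullback of the canonical sequence (\ref{cv1}), resp. (\ref{cv3}), by $\cdot x$ splits, and then invokes the definition of socle element to conclude that the pullback of $\eta$ splits. The two arguments rest on the same cohomological input, namely $H^1(\sU\otimes\sL^\vee(1))=0$ and the fact that $H^1_*(\sU\otimes\sU^\vee)$ is supported only in degree $-1$; yours additionally produces the realization map $\alpha$ (useful elsewhere, e.g.\ for existence statements), while the paper's is shorter and avoids the connecting-map diagram chase.

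Two small corrections that do not affect your main line. First, the dual of (\ref{cv1}) is $0\to\sL^\vee\to 3\sO_{\sV}\to\sU^\vee\to 0$, not $0\to\sL^\vee\to 3\sO_{\sV}(-1)\to\sU^\vee\to 0$: the minimal free cover of $\sL$ is $3\sO_{\sV}$, generated in degree $0$. Second, your parenthetical suggestion to run the $\sU$-case via (\ref{cv4}) instead of (\ref{cv3}) would not work: (\ref{cv4}) is not a free presentation of $\sU$, and a $\sU$-socle element need not die in $H^1_*(\sF\otimes 3\sL^\vee(1))$ --- its image there is precisely what the map $\phi_{\sF}$ defined just below measures, and it is nonzero in general.
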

\begin{proof} Let $\eta \in H^1(\sF(d) \otimes \sL^\vee)_{soc}$, giving a short exact sequence $0 \to \sF(d) \to \sA \to \sL
\to 0.$ Consider multiplication by $x\in A$ of degree one, $\sL(-1) \xrightarrow{\cdot x} \sL$. The pull back by this map of
the short exact sequence (\ref{cv1}) is split since $H^1(\sU\otimes \sL^\vee(1))=0$. So $\sL(-1) \xrightarrow{\cdot x} \sL$ factors
through $3\sO_{\sV}$. By the definition of $\sL$-socle element, the pull back of $\eta$ by $3\sO_{\sV} \to \sL$ splits, hence
also the pullback of $\eta$ by $\sL(-1) \xrightarrow{\cdot x} \sL$. Thus $x\cdot \eta = 0$.

A similar proof works for an element $\eta \in H^1(\sF(d) \otimes \sU^\vee)_{soc}$. One notices that the pull back by
$\sU(-1) \xrightarrow{\cdot x} \sU$ of the short exact sequence (\ref{cv3}) is split because $H^1_*(\sU\otimes \sU^\vee)=
3k$ supported in $H^1(\sU\otimes \sU^\vee(-1))$.
\end{proof}

In the definition of $\sU$-socle elements for $\sF$, the non-canonical inclusion $\sU^\vee \hookrightarrow 8\sO_{\sV})(1)$
can be replaced by a canonical composite inclusion $\sU^\vee \hookrightarrow 3\sL^\vee (1) \hookrightarrow 9\sO_{\sV})(1)$.
For any bundle $\sF$, this gives a canonical map
$$ \phi_{\sF}: H^1_*(\sF \otimes \sU^\vee)_{soc} \to 3H^1_*(\sF(1) \otimes \sL^\vee)_{soc}.$$

When $\sE$ is a vector bundle with Horrocks invariants $(\sF_{min}, V,W)$, it is immediate to see that
$V$ and $W$ are related by $\phi_{\sF_{min}}(W) \subseteq 3V(1)$.
This is a dependency between $V$ and $W$.  In fact, this is the only
requirement on the pair $(V,W)$ for proving an existence theorem on the Veronese surface:

\begin{theorem} \label{existence-V}Let $\sF_{min}$ be a minimal Horrocks data bundle on $\sV$, and let $V,W$ be
graded vector subspaces of
$H^1_*(\sF_{min} \otimes \sL^\vee)_{soc}, H^1_*(\sF_{min} \otimes \sU^\vee)_{soc}$ with the property that
$\phi_{\sF_{min}}(W) \subseteq 3V(1)$. Then there is a vector bundle $\sE$
on $\sV$ with Horrocks invariants $(\sF_{min},V,W)$.
\end{theorem}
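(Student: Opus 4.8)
The plan is to mimic the proof of Theorem \ref{existence} (and its strengthened quadric analogue), but to be more careful about the ACM bundle used in the defining extension, so that the constraint $\phi_{\sF_{min}}(W) \subseteq 3V(1)$ is exactly what is needed to glue the $\sL$-part and the $\sU$-part together. Concretely, I would first record a ``socle realization'' lemma in the spirit of Lemma \ref{socle}: for each of the ACM bundles $\sL$ and $\sU$ there is a canonical sequence (\ref{cv1}), (\ref{cv4}) whose dual, tensored with $\sF_{min}$, produces the connecting map sending a graded subspace $V'$ of $H^0_*$ isomorphically onto a space of socle elements; this gives homomorphisms $\alpha_V\colon V\otimes_k \sL^\vee \cdots$ realizing $V$ and $\alpha_W\colon W\otimes_k \sU^\vee \cdots$ realizing $W$. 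The subtlety over the Veronese is that $\sL^\vee$ and $\sU^\vee$ are \emph{not} ACM, so one must use the canonical resolutions $\sL^\vee \hookrightarrow 3\sO_{\sV}(\text{twist})$ and $\sU^\vee \hookrightarrow 3\sL^\vee(1)$ to even make sense of the socle modules — but this is precisely the content of the Definition \ref{socl} machinery already in place, so I would invoke it rather than reprove it.

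Next I would build the bundle $\sE$ as an extension. Set $\sB = (V\otimes_k \sL) \oplus (W \otimes_k \sU)$ and produce a class in $H^1_*(\sF_{min}\otimes \sB^\vee)$ all of whose components are socle elements, so that the resulting sequence $0 \to \sF_{min} \xrightarrow{\beta} \sE \xrightarrow{\rho} \sB \to 0$ has $H^0_*(\rho)$ surjective and hence $(\sF_{min},\beta)$ is an Horrocks datum for $\sE$ (exactly as in Theorem \ref{existence}). The class to use is the ``tautological'' one: $V\otimes_k \sL$ contributes the identity-like element of $V\otimes_k V^\vee$ pushed into $H^1_*(\sF_{min}\otimes \sL^\vee)$ via $\alpha_V$, and similarly for $W$. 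The point of the computation is then to show that $\ker H^1_*(\beta\otimes 1_{\sL^\vee}) = V$ and $\ker H^1_*(\beta\otimes 1_{\sU^\vee}) = W$ exactly — not just ``$\supseteq$''. For this I would tensor the defining sequence by $\sL^\vee$ and by $\sU^\vee$, compute $H^0_*(\sB\otimes\sL^\vee)$ and $H^0_*(\sB\otimes\sU^\vee)$ summand by summand using $\Hom$ computations among $\sO_{\sV}, \sL, \sU$ and their twists (these are available since the ACM bundles on $\sV$ are completely understood, being just $\sO_{\mathbb P^2}$, $\sO_{\mathbb P^2}(1)$, $\Omega^1_{\mathbb P^2}(1)$ up to twist), and argue as in the quadric existence theorem that sections of $\sL(a)\otimes \sL^\vee(b)$ or $\sU(a)\otimes\sU^\vee(b)$ with $a+b>0$ factor through the ambient free (or $3\sL$) resolution and hence contribute nothing, so only the $a+b=0$, $\End$-diagonal part survives and that maps onto $V$, resp. $W$.

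The main obstacle — and the place where the hypothesis $\phi_{\sF_{min}}(W)\subseteq 3V(1)$ must be used — is the \emph{cross term}: a section of $\Hom(\sU(a), \sL(b))$ or of $\Hom(\sL(a),\sU(b))$ between the two kinds of summands of $\sB$ could a priori contribute an element of $H^1_*(\sF_{min}\otimes\sL^\vee)$ lying outside $V$, or force $W$ to map somewhere unexpected in $H^1_*(\sF_{min}\otimes\sU^\vee)$, spoiling the equality of kernels. This is exactly where $\sU$ and $\sL$ interact: the sequence (\ref{cv4}), $0\to \sO_{\sV}(-1)\to 3\sL(-1)\to \sU \to 0$, means $\Hom(\sL(-b),\sU(a))$ is governed by maps $\sL(-b)\to 3\sL(a-1)$, i.e. by the factor of $3$ and a twist by $(1)$ — which is precisely the source of the map $\phi_{\sF_{min}}\colon H^1_*(\sF_{min}\otimes\sU^\vee)_{soc} \to 3H^1_*(\sF_{min}(1)\otimes\sL^\vee)_{soc}$. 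So the bookkeeping will be: the $\sU$-socle class $W$, when probed by the $\sL$-summand via this canonical map, lands inside $3V(1)$ by hypothesis, which is exactly what makes the mixed contributions stay inside the prescribed subspaces and keeps the two kernels equal to $V$ and $W$ on the nose. I expect the cross-term computation, organized around sequence (\ref{cv4}) and the definition of $\phi_{\sF_{min}}$, to be the delicate heart of the proof; once it is in hand, surjectivity of $H^0_*(\rho)$ and the identification of the Horrocks invariants follow formally as in Theorem \ref{existence}, and one notes as before that $\sE$ acquires no new $\sL$- or $\sU$-summands because $\sF_{min}$ has none.
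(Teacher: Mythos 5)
Your proposal follows essentially the same route as the paper: construct $\sE$ as the extension of $\sF_{min}$ by $\sB = (V\otimes_k\sL)\oplus(W\otimes_k\sU)$ defined by the tautological socle class, check summand by summand that same-type maps with positive twist factor through the free (or $3\sL$) resolutions and contribute nothing, and use the hypothesis $\phi_{\sF_{min}}(W)\subseteq 3V(1)$ precisely to control the cross term $\sL(b-1)\to\sU(b)$ coming from the sequence (\ref{cv4}), which is exactly how the paper's proof pins the image of $H^0_*(\sB\otimes\sL^\vee)$ down to $V$. The only cross term that is actually delicate is the $\sL\to\sU$ direction in twist one (the $\sU\to\sL$ maps factor through $3\sO_{\sV}$ since $H^1(\sU\otimes\sU^\vee(b-a))=0$ there), but your plan correctly locates where the hypothesis is needed.
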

\begin{proof} Construct $\sE$ as an extension of $\sF_{min}$ by $\sB = (V\otimes_k \sL) \oplus (W\otimes_k \sU)$:
$$ 0 \to \sF_{min} \xrightarrow{\beta} \sE \to \sB \to 0. \qquad (*)$$
Since $V$, $W$
are subspaces of socle elements, $\sE$ has $(\sF_{min},\beta)$ as its Horrocks datum. We wish to understand the images of
$H^0_*(\sB \otimes \sL^\vee) \to H^1_*(\sF_{min} \otimes \sL^\vee)$ and
$H^0_*(\sB \otimes \sU^\vee) \to H^1_*(\sF_{min} \otimes \sU^\vee)$.
$\End (\sL)= \End (\sU)=k$ and the image of $V\cdot I_{\sL} \subseteq H^0(V \otimes \sL \otimes \sL^\vee)$ and
$W \cdot I_{\sU} \subseteq H^0(W \otimes \sU \otimes \sU^\vee)$ give $V$ and $W$ in $H^1_*(\sF_{min} \otimes \sL^\vee)_{soc}$
and $H^1_*(\sF_{min} \otimes \sU^\vee)_{soc}$. It remains to analyze any other contributions to the two images inside
$H^1_*(\sF_{min} \otimes \sL^\vee)_{soc}$ and $H^1_*(\sF_{min} \otimes \sU^\vee)_{soc}$ and prove that the images are just
$V$ and $W$ respectively.

Let  $\sL(b),\sU(b)$ be any summands in  $(V\otimes_k \sL) \oplus (W\otimes_k \sU)$. Consider maps
$\sL(a) \xrightarrow{\sigma_1} \sL(b)$,  $\sL(a) \xrightarrow{\sigma_2} \sU(b)$,
$\sU(a) \xrightarrow{\sigma_3} \sU(b)$, $\sU(a) \xrightarrow{\sigma_4} \sL(b)$. For $\sigma_1$, assume $a<b$ since we wish
to omit endomorphisms of $\sL$. Likewise for $\sigma_3$. In the sequence (\ref{cv1}) tensored by $\sL^\vee(b-a)$,
$H^1(\sU\otimes \sL^\vee(b-a))=0$ and in the sequence (\ref{cv3}) tensored by $\sU^\vee(b-a)$,
$H^1(3\sU(-1)\otimes \sU^\vee(b-a))=0$. Hence $\sigma_1$ factors through $3\sO_{\sV}(b)$ and $\sigma_3$ factors through
$8\sO_{\sV}(b-1)$. By the socle nature of the extension (*), pullbacks of the (*) by $\sigma_1, \sigma_3$ split, hence the
element $\sigma_1 \in H^0(\sL(b)\otimes \sL^\vee(-a))$ maps to zero in $H^1_*(\sF_{min} \otimes \sL^\vee)$, and likewise
$\sigma_3$ maps to zero in $H^1_*(\sF_{min} \otimes \sU^\vee)$.

For $\sigma_4$ to be non-zero, we require that $a<b+1$. We know that $H^1(\sU \otimes \sU^\vee(b-a)) = 0$. Hence the same argument
applies to show that $\sigma_4$ factors through $3\sO_{\sV}(b)$ and we are done. The arguments for $\sigma_3, \sigma_4$
show that the image of
$H^0_*(\sB \otimes \sU^\vee) \to H^1_*(\sF_{min} \otimes \sU^\vee)$ equals $W$.

For $\sigma_2$ to be non-zero, we require that $a<b$ and we know that $H^1(3\sU(-1)\otimes \sL^\vee(b-a))=0$ except when
$b-a=1$. Hence the only situation of difficulty is when we have $\sigma_2: \sL(b-1) \to \sU(b)$. Suppose the pullback of
our short exact sequence (*) by $\sL(b-1) \xrightarrow{\sigma_2}\sU(b) \hookrightarrow \sB$ is non-split. The pullback
of (*) by $\sU(b) \hookrightarrow \sB$ gives a non-zero element $w$ of degree $-b$ in $W \subseteq
H^1_*(\sF_{min}\otimes \sU^\vee)_{soc}$.
The non-split pullback by $\sL(b-1) \to \sB$ gives a non-zero element $v$ in $H^1(\sF_{min}\otimes \sL^\vee(-b+1))_{soc}$
which is the image of $w$ under $\sigma_2^\vee$. Since $\sigma_2^\vee$ is one component in $\sU^\vee(-b)
\hookrightarrow 3\sL^\vee(-b+1)$,  the assumption that
$\phi_{\sF_{min}}(W) \subseteq 3V(1)$ tells us that $v \in V$. Thus, the image of $H^0_*(\sB \otimes \sL^\vee) \to
H^1_*(\sF_{min}\otimes \sL^\vee)$ equals $V$.

\end{proof}

We conclude with an example.
\begin{example} \end{example} The simplest non-ACM bundle on $\sV$ is $\sE = \Omega^1_{\sV} = \sU\otimes \sL^{\vee}$
with $H^1_*(\sE) =k$ and $\gamma$-sequence $0 \to \sE \to 3\sL^\vee \to \sO_{\sV} \to 0$, while its minimal Horrocks
data bundle is $\sF =\sF_{min}= \Omega^1_{\mathbb P^5}|_{\sV}$ with $\Psi$ sequence $0 \to \sF \to 6\sO_{\sV}(-1)
\to \sO_{\sV} \to 0$. The map $\beta: \sF \to \sE$ is the standard map $\Omega^1_{\mathbb P^5}|_{\sV} \to \Omega^1_{\sV}$
which is a surjective map of vector bundles but not surjective on the module of global sections. The Horrocks invariants
$(\sF,V,W)$ of $\sE$ are easy to work out and are described below.

$H^1_*(\sF \otimes \sL^\vee) = H^1(\sF(1)\otimes \sL^\vee) = 3k$, and $H^1_*(\sE \otimes \sL^\vee)=0$, hence $V = 3k
= H^1(\sF(1) \otimes \sL^\vee)$, where all elements in $H^1_*(\sF \otimes \sL^\vee)$ are $\sL$-socle.

There is a commutative diagram that shows the only non-zero parts of $H^1_*(\sF\otimes \sU^\vee)$ and
$H^1_*(\sE\otimes \sU^\vee)$:
 \[ \begin{CD}
 H^0(\sU^\vee) &\hookrightarrow &H^1(\sF\otimes \sU^\vee)               &\to &H^1(6 \sU^\vee(-1)) &\to 0 \\
 ||             &               & \downarrow{\beta\otimes I_{\sU^\vee}} &     &    \downarrow       &   \\
 H^0(\sU^\vee) & \cong          &H^1(\sE\otimes \sU^\vee)               & \to & 0                 &
 \end{CD} \]
Hence $H^1_*(\sF\otimes \sU^\vee)= H^1(\sF\otimes \sU^\vee)$ is nine-dimensional, and the the kernel $W$ of
$H^1_*(\beta\otimes I_{\sU^\vee})$ is a six-dimensional subspace (of $\sU$-socle elements)that maps isomorphically
to $H^1(6 \sU^\vee(-1))$.

When we apply the construction of the existence theorems \ref{existence}, \ref{existence-V} to the data $(\sF, V, W)$,
we obtain a vector bundle $\tilde\sE$ and a push-out diagram (refer to the discussion after Theorem \ref{existence}):
 \[ \begin{CD}
       &0                &      & 0                         &    &        &      \\
        &\downarrow      &      & \downarrow                &    &        &      \\
 0 \to &\sF              &\to   &6\sO_{\sV}(-1)             &\to &\sO_{\sV} &\to 0 \\
       &\downarrow{\tilde\beta}&      & \downarrow                &    & ||       &      \\
 0 \to &\tilde\sE        &\to   & \sA_{\tilde\sE}           &\to &\sO_{\sV} &\to 0 \\
        &\downarrow      &      & \downarrow                &    &         &      \\
        &\sB             &\cong & \sB                       &    &         &      \\
         &\downarrow      &      & \downarrow                &    &         &      \\
     &0                &      & 0                         &    &        &      \\
 \end{CD} \]
where $\sB = (V\otimes_k\sL) \oplus (W\otimes_k\sU)$.

According to the uniqueness theorems, $\sE$ is a rank two summand
of the rank 20 bundle $\tilde\sE$, with the remaining summand of $\tilde\sE$ consisting of ACM bundles. In this example, even
$\sA_{\tilde\sE}$ is not obvious because the middle short exact sequence is not split. Indeed, the middle sequence is the
push-out of the left sequence, hence it is split iff under $\sF \to 6\sO_{\sV}(-1)$, the image of the element
$\tau \in H^1(\sF\otimes \sB^\vee)$ is zero in $H^1(6\sO_{\sV}(-1)\otimes \sB^\vee)$. However, the components of
$\tau$ in each of the $\sU$-summands of $\sB$ generate the vector space $W \subset H^1(\sF\otimes \sU^\vee)$, and $W$ maps
isomorphically to $H^1(6 \sU^\vee(-1))$. Hence the image of $\tau$ is non-zero.

To understand $\tilde\sE$ and $\sA_{\tilde\sE}$, a little more work is needed.  The fact that $W$ maps
isomorphically to $H^1(6 \sU^\vee(-1))$ tells us that the middle short exact sequence contains 6 copies of the
canonical sequence (\ref{cv4}). Hence $\sA_{\tilde\sE} = 21 \sL^\vee$. The map $\sA_{\tilde\sE} \to \sO_{\sV}$ is now
easy to understand and shows that $\tilde\sE = \sE \oplus 18\sL^\vee$.

\end{document}